\theoremstyle{plain}
\newtheorem{theorem}{Theorem}
\newtheorem{definition}{Definition}[section]
\newtheorem{lemma}[definition]{Lemma}
\newtheorem{corollary}[definition]{Corollary}
\newtheorem{proposition}[definition]{Proposition}
\theoremstyle{definition}
\newtheorem*{theoremNotPLMorse}{Theorem~\ref{thm:NotPLMorse}}
\newtheorem*{retractiontheorem}{Theorem~\ref{t:retraction}}
\newtheorem*{homotopyequivtheorem}{Theorem~\ref{t:homotopyequiv}}
\theoremstyle{remark}
\newtheorem{remark}[definition]{Remark}
\newcommand{\R}{\textrm{R}}
\newcommand{\CA}{\mathcal{C}(\mathcal{A})}
\newcommand{\MflatF}{M_{\tiny \mbox{flat}}(F)}
\newcommand*\notocchapter[1]{%
  \if@openright\cleardoublepage\else\clearpage\fi
  \thispagestyle{empty}\global\@topnum\z@
  \@afterindenttrue
  \let\@secnumber\@empty
  \@makeschapterhead{#1}\@afterheading
}
\begin{document} 
\bibliographystyle{plain}
\title[Topological Complexity of ReLU networks]{Local and global topological complexity measures of ReLU neural network functions}

\author{J. Elisenda Grigsby}
\thanks{JEG was partially supported by Simons Collaboration grant 635578 and NSF grant  DMS - 2133822.}
\address{Boston College; Department of Mathematics; 522 Maloney Hall; Chestnut Hill, MA 02467}
\email{grigsbyj@bc.edu}

\author{Kathryn Lindsey}
\thanks{KL was partially supported by NSF grants DMS - 1901247 and DMS - 2133822.}
\address{Boston College; Department of Mathematics; 567 Maloney Hall; Chestnut Hill, MA 02467}
\email{lindseka@bc.edu}

\author{Marissa Masden}
\address{University of Oregon; Department of Mathematics, Fenton Hall; Eugene, OR 97403-1222 USA}
\email{mmasden@uoregon.edu}

\begin{abstract} 
We apply a generalized piecewise-linear (PL) version of Morse theory due to Grunert-K{\"u}hnel-Rote to define and study new local and global notions of topological complexity for fully-connected feedforward ReLU neural network functions $F: \mathbb{R}^n \rightarrow \mathbb{R}$.  Along the way, we show how to construct, for each such $F$, a canonical polytopal complex $\mathcal{K}(F)$ and a deformation retract $\mathbb{R}^n \rightarrow \mathcal{K}(F)$, yielding a convenient compact model for performing calculations. We also give a  construction showing that local complexity can be arbitrarily high.
\end{abstract}

\maketitle

\section{Introduction}
It is well-known (e.g. \cite{AroraBasu}) that the class of functions $F: \mathbb{R}^n \rightarrow \mathbb{R}$ realizable by feedforward ReLU neural networks is precisely the class of finite piecewise linear (PL) functions (Definition \ref{defn:finitePL}). Less well-understood is how architecture impacts function complexity and how complexity evolves during training. That is, for a fixed architecture of fully-connected, feedforward ReLU neural network, how does the set of finite PL functions realizable by that architecture sit inside the set of all finite PL functions? Do the dynamics of stochastic gradient descent preference certain classes of finite PL functions over others? In order to tackle these questions in a systematic way, we need a good framework for characterizing the complexity of finite PL functions. In the present work, we draw on algebraic topology and a PL version of Morse theory to develop a new notion of complexity.

The idea of using Betti numbers of sublevel sets as a measure of complexity for neural network functions was initiated in \cite{BianchiniScarselli}.  Recall that the Betti numbers of a topological space are the ranks of its {\em homology groups} (see e.g. \cite{Hatcher}), and they are invariants of the space up to homeomorphism (in fact, up to the weaker notion of homotopy equivalence, see Definition \ref{defn:HE}).  In \cite{BianchiniScarselli} and, later, \cite{GussSalakhutdinov}, the  {\em homological} or  {\em topological} complexity of a continuous function $F: \mathbb{R}^n \rightarrow \mathbb{R}$  was defined as the collection of Betti numbers of the (single) sublevel set $F_{\leq 0} := F^{-1}(-\infty, 0].$  In \cite{BianchiniScarselli}, the authors explore how homological complexity grows with respect to depth and width for neural networks with polynomial and $\arctan$ activation functions, and in \cite{GussSalakhutdinov}, the authors use {\em persistent homology}, introduced by Zomorodian-Carlsson in \cite{ZomorodianCarlsson}, to perform an empirical study of the homological complexity of ReLU neural networks of varying architectures. We note that modern practitioners favor the ReLU activation function for supervised learning problems involving regression.

Our study begins with the observation that another natural and complementary notion of complexity of a continuous function \[F: \mathbb{R}^n \rightarrow \mathbb{R}\] is the {\em number} of homeomorphism classes of its sublevel sets, \[F_{\leq a} := F^{-1}(-\infty,a], \mbox{ for } a \in \mathbb{R}.\] 
For a \emph{smooth} function $F$, Morse theory provides a toolkit for not only computing algebro-topological invariants of the sublevel sets (e.g., their Betti numbers) but also understanding how they change as the threshold $t = a$ varies. Put simply, for a {\em smooth} function $F: \mathbb{R}^n \rightarrow \mathbb{R}$, the homeomorphism class of a sublevel (or superlevel) set can only change across a {\em critical point}, where the gradient of $F$ vanishes. Moreover, generic smooth functions are {\em Morse}, which means that the critical points are isolated and admit a standard quadratic local model.  These conditions yield 
a straightforward description of how the topology of the sublevel set changes as the threshold crosses the critical value; the change is local, controlled, and relatively easy to describe. 

\color{black}
\begin{figure}
	\includegraphics[width=1.5in]{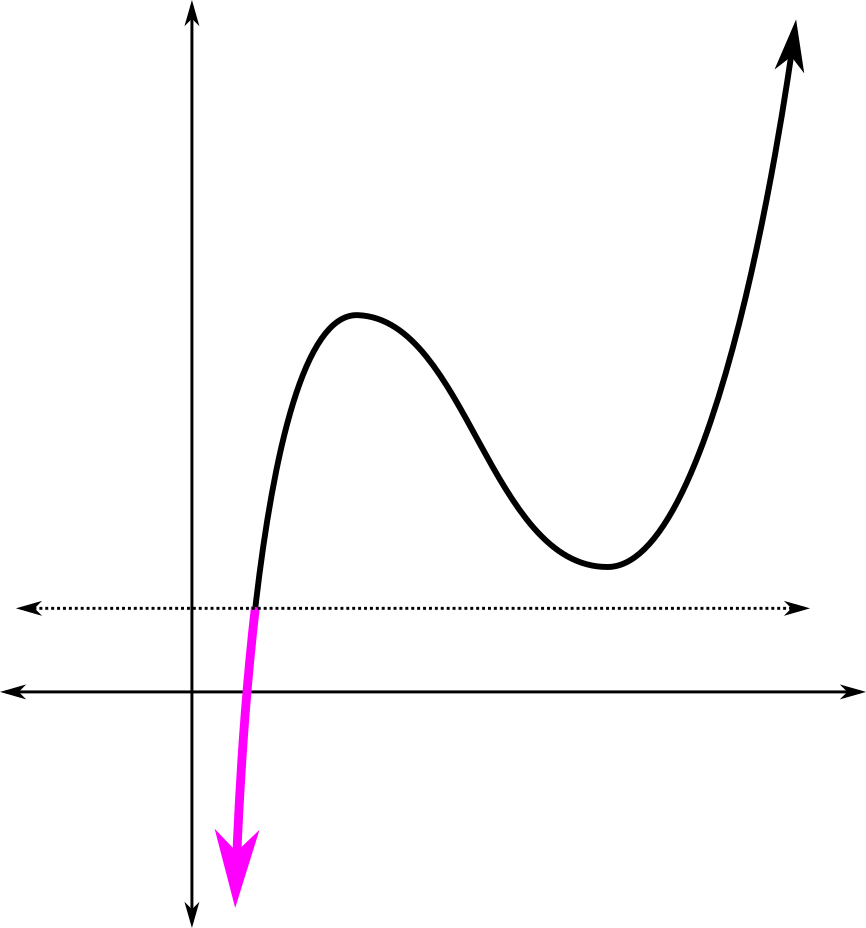}	\includegraphics[width=1.5in]{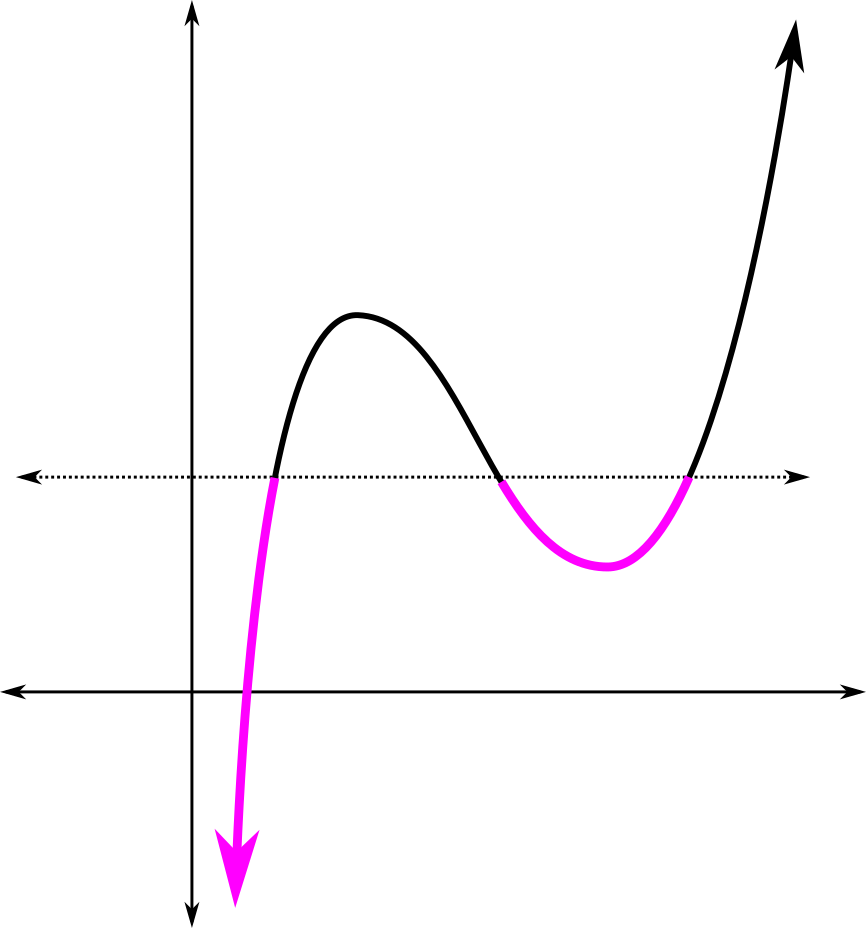}  \includegraphics[width=1.5in]{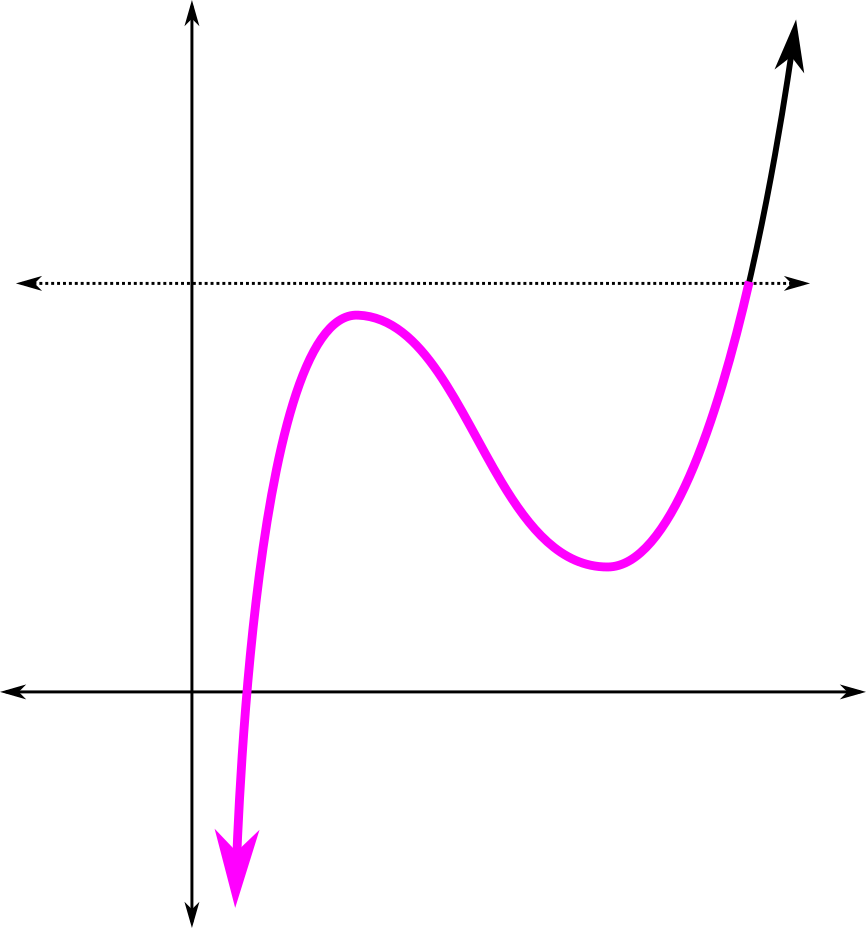}
	\caption{Algebro-topological invariants of the sublevel sets $F_{\leq t}$ of a function $F: \mathbb{R}^n \rightarrow \mathbb{R}$ for varying $t \in \mathbb{R}$ give strong information about the complexity of the function.}
	\label{f:sublevelsets}
\end{figure}

For PL functions $\mathbb{R}^n \rightarrow \mathbb{R}$, it is still natural to define a notion of complexity based on algebro-topological descriptions of homeomorphism classes of sublevel (or superlevel) sets. However, in contrast to the smooth setting, a generic finite PL function is not necessarily PL Morse (see Section \ref{sec:PLMorse}); consequently, describing how the homological complexity of a sublevel set $F_{\leq a}$ varies with $a \in \mathbb{R}$ requires a more delicate analysis.

Following ideas dating back to Banchoff \cite{Banchoff}, we focus our attention on the {\em flat} cells of a ReLU neural network map $F: \mathbb{R}^n \rightarrow \mathbb{R}$. The \emph{canonical polyhedral complex} $\mathcal{C}(F)$ associated to a feedforward ReLU neural network map $F:\mathbb{R}^n \to \mathbb{R}$ was introduced in \cite{GL}; informally, it is a decomposition of $\mathbb{R}^n$ into convex polyhedra (cells) of varying dimension such that $F$ is affine-linear on each cell.  

 The flat cells of $F$ are, by definition, the cells of $\mathcal{C}(F)$ on which $F$ is constant. Flat cells in the PL category should be viewed as the appropriate analogues of critical points in the smooth category, with the caveat that not every flat cell is critical. Explicitly, we prove:

\begin{theorem} \label{thm:HomEquivTransversal} Let $F: \mathbb{R}^n \rightarrow \mathbb{R}$ be a finite PL function, and let $a \leq b \in \mathbb{R}$ be thresholds. If the polyhedral complex $\mathcal{C}(F)_{F_{\in [a,b]}}$ contains no flat cells, then the sublevel sets $F_{\leq a}$ and $F_{\leq b}$ are homotopy equivalent, as are the superlevel sets $F_{\geq a}$ and $F_{\geq b}$.
\end{theorem}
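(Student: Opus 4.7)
The plan is to construct an explicit strong deformation retraction of $F^{-1}(-\infty, b]$ onto $F^{-1}(-\infty, a]$, supported on the slab $X := F^{-1}[a,b]$, by flowing ``downward'' along $F$. Running the same construction in reverse (flowing upward from $F^{-1}(a)$ to $F^{-1}(b)$) yields the parallel statement for superlevel sets, so both conclusions reduce to producing this single downward retraction.

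The key local observation is that for each cell $\sigma$ of $\mathcal{C}(F)$ meeting $X$, the restriction $F|_\sigma$ is affine, and by hypothesis non-constant. Hence inside the affine span of $\sigma$ one may choose a vector $v_\sigma$ with $dF|_\sigma(v_\sigma) = -1$, and the slab $\sigma \cap F^{-1}[a,b]$ is foliated by parallel affine level sets. My construction of the global retraction then proceeds in two steps. First, I would assemble the $v_\sigma$ into a continuous PL vector field $V$ on $X$ that is tangent to each cell: I would proceed by induction up the face poset of $\mathcal{C}(F)_{F \in [a,b]}$, at each stage extending a chosen $v_\tau$ on a face $\tau$ to $v_\sigma$ on a coface $\sigma > \tau$ by adding a correction lying in the orthogonal complement of $\tau$ inside $\sigma$ so as to preserve the normalization $dF|_\sigma(v_\sigma) = -1$. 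Because $F|_\sigma$ is non-constant, such a correction always exists. Second, I would integrate $V$ to obtain a flow $\Phi_s$ on $X$; by construction, $F \circ \Phi_s(x) = F(x) - s$, so a trajectory from $x$ reaches $F^{-1}(a)$ after time $F(x) - a$. The deformation retraction is then
\[
H(x,t) = \begin{cases} x & F(x) \leq a, \\ \Phi_{t(F(x)-a)}(x) & F(x) \in [a,b],\end{cases}
\]
which is continuous and agrees on $F^{-1}(a)$.

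The main obstacle is verifying the global continuity of $V$ (and hence of $\Phi_s$) across the stratification of $X$ by cells of $\mathcal{C}(F)$: when a trajectory inside $\sigma$ exits through a face $\tau < \sigma$, one must check that it continues coherently under $v_\tau$, and symmetrically that trajectories along $\tau$ are limits of trajectories slightly inside cofaces. This is precisely what the inductive, face-compatible choice of $v_\sigma$ ensures, but a careful bookkeeping argument is needed to check that the resulting PL flow is well-defined and jointly continuous in $(x,s)$. An alternative route, more in line with the paper's declared framework, would be to cite the PL Morse theory of Grunert--K\"uhnel--Rote: one identifies the hypothesis ``no flat cells in the slab'' with the absence of PL-critical cells, and then the homotopy equivalence of sublevel and superlevel sets across a non-critical interval is a standard output of that theory. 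Either way, the absence of flat cells in $\mathcal{C}(F)_{F \in [a,b]}$ is exactly what allows the normalization $dF|_\sigma(v_\sigma) = -1$ to be imposed everywhere, so the hypothesis enters the proof at precisely one (crucial) point.
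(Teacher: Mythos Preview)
Your approach is the natural PL analogue of the smooth Morse-theoretic argument, and you have correctly isolated where the hypothesis enters. There are, however, two related gaps.

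First, the inductive construction as described --- a single constant vector $v_\sigma$ per cell, extended from each face by an orthogonal correction --- does not yield a continuous field. A cell $\sigma$ generally has several proper faces $\tau_1,\tau_2,\ldots$ already carrying distinct vectors $v_{\tau_i}$, and no constant $v_\sigma$ can agree with all of them on $\partial\sigma$; continuity of a cellwise-constant field on a connected complex forces every $v_\sigma$ to coincide, which is impossible once two $1$-cells in the slab are non-parallel (tangency to both would force $v=0$, contradicting $dF(v)=-1$). What one actually needs is a field that is affine on cells of a simplicial subdivision, with values prescribed at the $0$-cells and extended affinely.

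Second --- and this is the substantive obstruction --- the cells of $\mathcal{C}(F)$ meeting the slab may be unbounded. Unbounded polyhedral sets admit no finite simplicial subdivision, and $\mathcal{C}(F)_{F\in[a,b]}$ may contain no $0$-cells at all (e.g.\ $F(x,y)=x$ on $\mathbb{R}^2$), so there is nothing on which to anchor a vertexwise construction. Your fallback of citing Grunert--K\"uhnel--Rote runs into exactly the same wall: their level-set isotopy result (Grunert's Lemma~4.13, giving $|M|_{\in[a,b]}\cong |M|_{=t}\times[a,b]$) is stated and proved only for \emph{polytopal}, i.e.\ bounded, complexes. It is not a ``standard output'' in the unbounded setting.

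The paper's argument is devoted to closing precisely this gap, and takes a different route from yours. Rather than build a descending flow directly, one first constructs a strong deformation retraction of $|\mathcal{C}(F)|$ onto a canonical bounded polytopal subcomplex: essentialize if the complex is unpointed, then collapse each pointed cell along its recession cone onto the convex hull of its vertices via the ``base map''. After a harmless refinement of $\mathcal{C}(F)$ at levels $a-\epsilon$ and $b+\epsilon$, this retraction is shown to be $F$-level-preserving on the slab and hence to carry sublevel sets to sublevel sets of the bounded model. Grunert's polytopal result then finishes the job. Your vector-field idea is not wrong in spirit, but making it rigorous on an unbounded polyhedral complex would require essentially the same reduction to a compact model.
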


\noindent Here, $\mathcal{C}(F)_{F_{\in [a,b]}}$ is a level set complex (see \S \ref{s:polybackground} for the precise definition); it is, roughly speaking, the restriction of the complex $\mathcal{C}(F)$ to the set $F^{-1}([a,b])$. 
Homotopy equivalence is an equivalence relation on topological spaces that is weaker than homeomorphism but strong enough to imply the equality of all standard invariants coming from algebraic topology (cf. Section \ref{sec:Hcompxty} and \cite{Hatcher}). Moreover, since a finite PL map has finitely many flat cells, an immediate consequence of Theorem \ref{thm:HomEquivTransversal} is that there are finitely many homotopy equivalence classes of sublevel sets -- i.e. for a feedforward ReLU neural network function $F$, we need only consider finitely many sublevel sets $F_{\leq a}$, $a \in \mathbb{R}$,  to capture all of the topological complexity information associated to $F$. 

To this end, we may first consider the sublevel sets $F_{\leq a}$ (resp. superlevel sets $F_{\geq a}$) for very negative (resp. very positive) values of $a \in \mathbb{R}$.  
Indeed, if the images of all flat cells are contained in the compact interval $[-M,M]$, then the homology of $F_{\leq a}$ and $F_{\leq b}$ agree for all $a, b < -M$ and the homology of $F_{\geq a}$ and $F_{\geq b}$ agree for all $a,b > M$. Consequently, a first (coarse) description of the topological complexity of $F$ is given by:

\begin{itemize}
	\item the collection of Betti numbers of the set $F_{\leq a}$ for any $a < -M$ and 
	\item the collection of Betti numbers of $F_{\geq a}$ for any $a > M$.
\end{itemize} 

Next, we may investigate how the sublevel sets $F_{\leq t}$ (and superlevel sets $F_{\geq t}$) change as we vary $t \in [-M,M]$.  In analogy to the smooth setting, our preference would be to apply PL Morse theory (as developed in \cite{Grunert}) to the class of finite PL maps realized by ReLU neural network functions, but we quickly see that ReLU neural network functions have a nonzero probability of being non PL Morse (Definition \ref{defn:PLMorse}):

\begin{theorem} \label{thm:NotPLMorse} Let $F: \mathbb{R}^n \rightarrow \mathbb{R}$ be a ReLU neural network map with hidden layers of dimensions $(n_1, \ldots, n_\ell)$. The probability that $F$ is  PL Morse is 
\begin{equation} \label{eq:nonPLMorseProb}
\left\{\begin{array}{cl} = \frac{\sum_{k=n+1}^{n_1} {n_1 \choose k}}{2^{n_1}} & \mbox{if $\ell = 1$}\\
		 \leq \frac{\sum_{k=n+1}^{n_1} {n_1 \choose k}}{2^{n_1}}  & \mbox{if $\ell > 1.$}\end{array}\right.
		 \end{equation}
\end{theorem}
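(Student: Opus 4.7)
The plan is to handle the depth-$1$ case by a direct combinatorial analysis of the first-layer hyperplane arrangement and then obtain the $\ell>1$ inequality by a layer-peeling argument, exploiting the fact that $\mathcal{C}(F)$ refines the first-layer arrangement.

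For $\ell=1$, write $F(x)=c+\sum_{i=1}^{n_1}a_i\,\Relu(w_i\cdot x+b_i)$. Almost surely in the weight measure, the $n_1$ hyperplanes $H_i=\{w_i\cdot x+b_i=0\}$ are in general position in $\mathbb{R}^n$, so $\mathcal{C}(F)$ is the classical cell complex of this arrangement; by Zaslavsky's theorem its top cells correspond bijectively to the $\sum_{k=0}^{n}\binom{n_1}{k}$ realizable sign vectors in $\{+,-\}^{n_1}$, while the remaining $2^{n_1}-\sum_{k=0}^{n}\binom{n_1}{k}=\sum_{k=n+1}^{n_1}\binom{n_1}{k}$ sign vectors are unrealized. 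I would then unpack Definition~\ref{defn:PLMorse} to reduce PL Morseness of $F$, generically, to a property $P(\sigma)$ of a distinguished sign vector $\sigma$ canonically attached to $F$---for example, the activation pattern at a fixed reference point such as the origin, or the pattern read at a canonical vertex of $\mathcal{C}(F)$. Because the biases are drawn from a continuous symmetric distribution, each of the $2^{n_1}$ sign vectors appears as the distinguished $\sigma$ with probability $1/2^{n_1}$, so the PL Morse probability equals the number of $\sigma$ satisfying $P$ divided by $2^{n_1}$; I expect $P(\sigma)$ to hold precisely when $|\sigma^{-1}(+)|>n$, yielding the claimed ratio $\sum_{k=n+1}^{n_1}\binom{n_1}{k}/2^{n_1}$.

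The depth-$\ell>1$ inequality then follows by peeling off the first layer: the first-layer hyperplane arrangement persists as a subcomplex of $\mathcal{C}(F)$, since subsequent ReLU layers only subdivide first-layer cells rather than merge them (cf.~\cite{GL}). In particular, PL Morseness of $F$ implies the condition $P$ at the first layer, so the PL Morse probability is bounded above by the depth-$1$ value---an inequality that need not be sharp because deeper layers can introduce further failure modes. The principal obstacle is pinning down the property $P(\sigma)$: reducing PL Morseness in the Grunert--K\"uhnel--Rote sense to a combinatorial condition on a single sign vector requires a careful link-theoretic analysis at the vertices of $\mathcal{C}(F)$ and, crucially, in its behavior at infinity, together with a null-set argument ruling out accidental coincidences in the weight space.
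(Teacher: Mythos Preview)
Your overall strategy---Zaslavsky's count for the first-layer arrangement, then layer-peeling for $\ell>1$---is on the right track, but the proposed mechanism for the $\ell=1$ case contains a genuine gap.

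The reduction you propose, to a property $P(\sigma)$ of the sign vector at a fixed reference point with $P(\sigma)\Leftrightarrow|\sigma^{-1}(+)|>n$, does not work. Whether $F$ is PL Morse is not determined by the activation pattern at any single point; it is determined by whether the sign vector $(-1,\ldots,-1)$ is \emph{realized} as the pattern of some top-dimensional region. Concretely: if some region $R$ has $\theta_R=(-1,\ldots,-1)$ then $F^{(1)}(R)=\{0\}$ and $R$ is a flat $n$-cell, which by Lemma~\ref{lem:Flatncell} forces every point of $R$ to be PL degenerate-critical. Conversely (Lemma~\ref{l:onelayersendstopoint}), in the generic one-layer case the only way a positive-dimensional flat cell can arise is as a face of an all-negative region. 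So the correct dichotomy is ``$(-1,\ldots,-1)$ is realized as a region'' versus ``not realized''---a global property of the co-oriented arrangement, not a function of the pattern at the origin. One checks easily (two generic points on a line, $n=1$, $n_1=2$) that your proposed $P$ neither matches this criterion nor is independent of the choice of reference point.

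The correct count then runs as follows. Fix the unoriented generic arrangement; it has $r(\mathcal{A})=\sum_{k\le n}\binom{n_1}{k}$ regions. For each region $R$ there is exactly one of the $2^{n_1}$ co-orientations making $R$ the all-negative region, and distinct regions give distinct co-orientations. Hence exactly $r(\mathcal{A})$ co-orientations produce a flat $n$-cell, and the PL Morse probability is $1-r(\mathcal{A})/2^{n_1}$. Your numerator $\sum_{k>n}\binom{n_1}{k}$ appears for this reason, not because it counts unrealized sign vectors in any operative way. Note also that the ``no flat $n$-cell $\Rightarrow$ PL Morse'' direction requires verifying that every $0$-cell is then PL regular or nondegenerate-critical (Proposition~\ref{p:stdmodel}); you flag this as ``link-theoretic analysis'' but it is not optional---it is the substance of the forward implication.

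Your $\ell>1$ argument is essentially correct once the right first-layer criterion is in place: an all-negative region for $F^{(1)}$ remains a flat $n$-cell for the full $F$ regardless of later layers, so ``$F$ PL Morse'' implies ``no all-negative first-layer region,'' yielding the inequality.
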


We remark that the expression \eqref{eq:nonPLMorseProb} above for the probability is \[\left\{\begin{array}{cl} 0 & \mbox{if $n_1 \leq n$}\\
																			\leq \frac{1}{2} & \mbox{if $n_1 \leq 2n+1$}.\end{array}\right.\]
Additionally, note that for architectures whose hidden layers all have the same dimension (a common choice), the upper bound given by  Theorem~\ref{thm:NotPLMorse} is significantly lower for deep networks than for shallow networks of the same total dimension. For ReLU neural networks with $>1$ hidden layer, we expect the true probability that $F$ is PL Morse to be much lower than the given upper bound. 

Thus, in order to measure {\em how} the sublevel sets $F_{\leq t}$ change as we vary the threshold $t$, we need to do more work. Following \cite{GrunertRote}, we associate to each (connected component of)\footnote{Flat cells of PL maps need not be isolated.} flat cell(s) $K$ at level $t$ its {\em local homological complexity} (or {\em local $H$--complexity} for short), which we define to be the rank of the relative homology of the pair $(F_{\leq t}, F_{\leq t} \setminus K)$. (Readers unfamiliar with relative homology may informally view the local $H$--complexity of $K$ as the (total rank of the) homology of the quotient space obtained by collapsing everything in $F_{\leq t}$ outside of a neighborhood of $K$ to a point; for more detail see \cite{Hatcher}.) We may then define the {\em total $H$--complexity} of a finite PL map to be the sum of all the local $H$--complexities. It is noted in \cite{GrunertRote} that PL Morse {\em regular} points have local $H$--complexity $0$ and PL Morse {\em nondegenerate-critical} points have local $H$--complexity $1$.

To prove Theorem \ref{thm:HomEquivTransversal} we draw heavily upon Grunert's work in \cite{Grunert}, where he proves an analogous result for \emph{bounded} polyhedral complexes, aka {\em polytopal complexes}.

 We apply results in convex geometry and combinatorics (cf. \cite{Schrijver}) to construct, 
  for every ReLU neural network map $F: \mathbb{R}^n \rightarrow \mathbb{R}$ with canonical polyhedral complex $\mathcal{C}(F)$, a \emph{canonical polytopal complex}, $\mathcal{K}(F)$, and a homotopy equivalence $|\mathcal{C}(F)| \sim |\mathcal{K}(F)|$ (we use $| \cdot |$ to denote the underlying set of a polyhedral complex) that respects the finite PL map $F$:

\begin{theorem} \label{t:retraction}
Let $\mathcal{C}$ be a polyhedral complex in $\mathbb{R}^n$, let $F:|\mathcal{C}| \to \mathbb{R}$ be linear on cells of $\mathcal{C}$, and assume $|\mathcal{C}_{F \in [a,b]}| \subseteq |\mathcal{C}|$ contains no flat cells.  Then there exists a polytopal complex $\mathcal{D}$ with $|\mathcal{D}| \subset |\mathcal{C}|$ and a strong deformation retraction $\Phi:|\mathcal{C}| \to |\mathcal{D}|$ (which induces a cellular map $\mathcal{C} \to \mathcal{D}$) such that
$\Phi$ is $F$-level preserving on $|\mathcal{C}_{F \in [a,b]}|$ and for every $c \in [a,b]$,
\begin{equation} \label{eq:PhiProperties}
\Phi( |\mathcal{C}_{F \leq c}| ) = |\mathcal{D}_{F \leq c}|, \quad \Phi( |\mathcal{C}_{F \geq c}| ) = |\mathcal{D}_{F \geq c}|.
\end{equation}
Furthermore, there is an $F$-level preserving PL isotopy  
$$\phi: |\mathcal{D}|_{F = c} \times [a,b] \to |\mathcal{D}_{F \in [a,b]}|.$$
\end{theorem}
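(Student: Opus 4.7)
The plan is to reduce to Grunert's analogous PL Morse theorem for polytopal (bounded) polyhedral complexes \cite{Grunert} by constructing $\mathcal{D}$ as the common refinement of $\mathcal{C}$ with a sufficiently large bounding polytope $P \subset \mathbb{R}^n$ chosen in general position. The crucial structural observation driving this approach is that within the slab $F^{-1}([a,b])$, every nonzero recession direction $v$ of a cell $\sigma$ of $\mathcal{C}$ must satisfy $\langle \nabla F|_\sigma, v\rangle = 0$: otherwise $F$ would be unbounded along $v$ on $\sigma$, contradicting the fact that $\sigma \cap F^{-1}([a,b])$ is bounded in the $F$-direction. Consequently, translating a point along any recession direction of a slab cell preserves the $F$-value, so a recession-directed truncation will leave $F$ undisturbed inside the slab.

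First I would select a bounded convex polytope $P \subset \mathbb{R}^n$ whose interior contains every bounded flat cell of $\mathcal{C}$ and whose boundary $\partial P$ is in general position with respect to the cells of $\mathcal{C}$ and the hyperplanes $\{F = a\}$, $\{F = b\}$; such a $P$ exists by a standard genericity argument (for instance, a sufficiently large dilate of a generic simplex). Define $\mathcal{D}$ to be the induced polytopal refinement $\mathcal{C} \cap P$, whose cells are the intersections $\sigma \cap P$ for $\sigma \in \mathcal{C}$ together with the faces arising from $\partial P$. Each cell of $\mathcal{D}$ is bounded, so $\mathcal{D}$ is polytopal; $|\mathcal{D}| \subset |\mathcal{C}|$ by construction; and general position of $\partial P$ ensures no new flat cells are introduced in $\mathcal{D}_{F \in [a,b]}$, so the no-flat-cells hypothesis transfers. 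Then I would define $\Phi: |\mathcal{C}| \to |\mathcal{D}|$ cellwise as a PL retraction along recession cones: for each cell $\sigma$ of $\mathcal{C}$, fix a reference point $p_\sigma \in \sigma \cap P$ (chosen consistently across shared faces) and take $\Phi|_\sigma$ to be the star-shaped retraction from $p_\sigma$ of $\sigma$ onto $\sigma \cap P$. The cellwise retractions glue into a continuous strong deformation retraction inducing a cellular map $\mathcal{C} \to \mathcal{D}$. Inside the slab, the key observation forces every recession direction traversed by $\Phi$ to satisfy $\langle \nabla F, v\rangle = 0$, so $\Phi$ is $F$-level preserving on $|\mathcal{C}_{F \in [a,b]}|$, and properties \eqref{eq:PhiProperties} follow because the retraction outside the slab carries $|\mathcal{C}_{F \leq a}|$ into $|\mathcal{D}_{F \leq a}|$ and $|\mathcal{C}_{F \geq b}|$ into $|\mathcal{D}_{F \geq b}|$.

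Since $\mathcal{D}_{F \in [a,b]}$ is now a polytopal complex with no flat cells, Grunert's PL Morse theorem \cite{Grunert} applies directly to supply the $F$-level preserving PL isotopy $\phi: |\mathcal{D}|_{F = c} \times [a,b] \to |\mathcal{D}_{F \in [a,b]}|$. The main obstacle will be the construction of $P$ and the cellwise retractions: one must verify that $P$ can be chosen so that $\partial P$ meets every cell of $\mathcal{C}$ transversally and that the reference points $p_\sigma$ can be chosen consistently across shared faces, so that the cellwise maps patch into a globally well-defined continuous cellular $\Phi$. This reduces to a careful combinatorial analysis of the recession fans of adjacent cells of $\mathcal{C}$ and their interaction with $\partial P$, drawing on standard convex-geometric decompositions of unbounded polyhedra (cf.\ \cite{Schrijver}).
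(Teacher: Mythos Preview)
Your overall strategy---truncate to a bounded complex, then invoke Grunert---matches the paper's, and your key structural observation (recession directions of slab cells are $F$-constant) is exactly the insight the paper exploits in Lemma~\ref{l:FLevelPresDefRetract}. However, your construction of $\Phi$ does not actually use that observation, and this is a genuine gap.

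A star-shaped retraction from a reference point $p_\sigma$ moves a point $x \in \sigma \setminus P$ along the direction $x - p_\sigma$, which is not in general a recession direction of $\sigma$. So the sentence ``every recession direction traversed by $\Phi$ satisfies $\langle \nabla F, v\rangle = 0$'' is a non sequitur: your $\Phi$ traverses radial directions, not recession directions, and hence need not be $F$-level preserving on the slab. For the same reason, property~\eqref{eq:PhiProperties} fails: a radial retraction can move a point with $F(x) < a$ to a point whose $F$-value lies strictly between $F(x)$ and $F(p_\sigma)$, possibly above $a$. The gluing problem you flag is also real and not merely technical: star retractions from distinct centers $p_\sigma$, $p_\tau$ will not agree on a shared face $\tau \subset \sigma$.

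The paper resolves all three issues by a different retraction. It first refines $\mathcal{C}$ by the level sets $F = a - \epsilon$ and $F = b + \epsilon$ (so that each slab cell has characteristic cone consisting \emph{only} of $F$-constant directions), and then uses the intrinsically defined \emph{base map}: each point is sent to the nearest point of the convex hull of the vertices of its cell, measured along the characteristic cone. This map genuinely retracts along recession directions, is shown to agree on shared faces (Lemmas~\ref{l:retractionpreservesfacets}--\ref{l:facebyface}), and the level-set refinement guarantees cells outside the slab are carried into themselves, yielding~\eqref{eq:PhiProperties}. The unpointed case is handled separately by first projecting to the essentialization (Lemma~\ref{l:UnptdToPtd}). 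Your intersection-with-$P$ idea could in principle be made to work, but only after replacing the star retraction with one that genuinely moves along recession cones and refining $\mathcal{C}$ so that cells do not straddle the thresholds $a,b$---at which point you would essentially be reconstructing the paper's base map.
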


Combined with results of Grunert (\cite{Grunert}) for polytopal complexes, this immediately implies:

\begin{theorem} \label{t:homotopyequiv}
Let $\mathcal{C}$ be a polyhedral complex in $\mathbb{R}^n$, let $F:|\mathcal{C}| \to \mathbb{R}$ be linear on cells of $\mathcal{C}$, and assume $|\mathcal{C}_{F \in [a,b]}| \subseteq |\mathcal{C}|$ contain no flat cells.  Then for any threshold $c \in [a,b]$, $|\mathcal{C}_{F \leq c}|$ is homotopy equivalent to $|\mathcal{C}_{F \leq a}|$; also $|\mathcal{C}_{F \geq c}|$ is homotopy equivalent to $|\mathcal{C}_{F \geq b}|$. \end{theorem}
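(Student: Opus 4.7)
The plan is to leverage Theorem~\ref{t:retraction} to reduce the polyhedral statement to an analogous statement for polytopal complexes, to which Grunert's machinery (\cite{Grunert}) directly applies. First I would apply Theorem~\ref{t:retraction} to the given $\mathcal{C}$ and interval $[a,b]$, producing a polytopal complex $\mathcal{D}$ with $|\mathcal{D}| \subset |\mathcal{C}|$, a strong deformation retract $\Phi:|\mathcal{C}| \to |\mathcal{D}|$ (realized by a homotopy $\Phi_s$) that is $F$-level preserving on $|\mathcal{C}_{F \in [a,b]}|$ and satisfies \eqref{eq:PhiProperties}, together with the PL isotopy $\phi$ trivializing the slab $|\mathcal{D}_{F \in [a,b]}|$ as a cylinder.

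The second step is to promote $\Phi$ to a homotopy equivalence at the level of sublevel (and superlevel) sets. I would argue that for each $c \in [a,b]$, the restriction $\Phi\big|_{|\mathcal{C}_{F \leq c}|}:|\mathcal{C}_{F \leq c}| \to |\mathcal{D}_{F \leq c}|$ is itself a strong deformation retract, with homotopy inverse the inclusion $|\mathcal{D}_{F \leq c}| \hookrightarrow |\mathcal{C}_{F \leq c}|$. The two ingredients are that $\Phi$ surjects $|\mathcal{C}_{F \leq c}|$ onto $|\mathcal{D}_{F \leq c}|$ by \eqref{eq:PhiProperties}, and that the homotopy $\Phi_s$ keeps $|\mathcal{C}_{F \leq c}|$ setwise invariant: on $|\mathcal{C}_{F \in [a,c]}|$ this is immediate from $F$-level preservation, and on $|\mathcal{C}_{F \leq a}|$ it follows from tracking the cellular structure of $\Phi$ to see that $F$-values are not pushed above $a$. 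The same reasoning applied to the reverse filtration yields $|\mathcal{C}_{F \geq c}| \simeq |\mathcal{D}_{F \geq c}|$.

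The third step is to note that $\mathcal{D}$, being a polytopal complex whose slab $|\mathcal{D}_{F\in[a,b]}|$ contains no flat cells (a property inherited from $\mathcal{C}$ since $|\mathcal{D}| \subset |\mathcal{C}|$ and the retraction $\Phi$ is cellular), satisfies the hypotheses of Grunert's polytopal analogue of our statement. Invoking that result produces $|\mathcal{D}_{F \leq c}| \simeq |\mathcal{D}_{F \leq a}|$ and $|\mathcal{D}_{F \geq c}| \simeq |\mathcal{D}_{F \geq b}|$, and chaining the homotopy equivalences
\[ |\mathcal{C}_{F \leq c}| \simeq |\mathcal{D}_{F \leq c}| \simeq |\mathcal{D}_{F \leq a}| \simeq |\mathcal{C}_{F \leq a}|, \]
together with its superlevel counterpart, gives the theorem.

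I expect the main subtlety to live in the second step, namely the verification that the homotopy $\Phi_s$ preserves sublevel sets even for points in $|\mathcal{C}_{F \leq a}|$, where the $F$-level preserving hypothesis of Theorem~\ref{t:retraction} does not a priori apply. If one wishes to avoid this verification, Grunert's theorem can be bypassed altogether: using the PL isotopy $\phi$ directly, one constructs an explicit strong deformation retract of $|\mathcal{D}_{F \leq c}|$ onto $|\mathcal{D}_{F \leq a}|$ by collapsing the cylinder $|\mathcal{D}|_{F=c} \times [a,c]$ onto its bottom face, producing a self-contained polytopal-side argument. Either route then feeds into the same chain of equivalences above to conclude.
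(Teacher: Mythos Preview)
Your proposal is correct and follows essentially the same route as the paper: apply Theorem~\ref{t:retraction}, use \eqref{eq:PhiProperties} together with the fact that $\Phi$ is a strong deformation retraction to get $|\mathcal{C}_{F \leq c}| \simeq |\mathcal{D}_{F \leq c}|$ for each $c \in [a,b]$, then use the PL isotopy $\phi$ (already packaged in Theorem~\ref{t:retraction}) to get $|\mathcal{D}_{F \leq c}| \simeq |\mathcal{D}_{F \leq a}|$, and chain. Your explicit flagging of the step-2 subtlety (that the homotopy $\Phi_s$ must keep $|\mathcal{C}_{F \leq c}|$ invariant, including on $|\mathcal{C}_{F \leq a}|$ where $F$-level preservation is not asserted) is well placed; in the paper this is handled inside the proof of Theorem~\ref{t:retraction} via the cell-preserving nature of the base map and the refinement $\mathcal{C}'$ (Lemma~\ref{l:FLevelPresDefRetract}), so that the straight-line homotopy stays within each cell of $\mathcal{C}'$ and hence within the appropriate sublevel set.
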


Because polytopal complexes admit finite simplicial subdivisions and there exist finite time algorithms for computing homology for finite simplicial complexes,  Theorems \ref{t:retraction} and \ref{t:homotopyequiv} together have the following important practical consequence, which we state informally:

\vskip 10pt
{\em For every flavor of $H$--complexity described in this paper and every finite PL map $F: \mathbb{R}^n \rightarrow \mathbb{R}$, there exists a finite time algorithm for computing the $H$--complexity of $F$.}
\vskip 10pt

After establishing these foundational preliminaries, we present, in Section \ref{sec:LocCmpxArbLarge}, a construction proving that local $H$--complexity of a ReLU neural network function can be arbitrarily large. 
\tableofcontents

\subsection*{Acknowledgments}
The authors would like to thank B. Hanin, M. Telgarsky, and D. Rolnick for interesting conversations.
\color{black}

\color{black}
\section{Preliminaries}  \label{s:polybackground}

We briefly recall some definitions and constructions in convex geometry, along with how they appear in the geometry of ReLU neural networks. We refer the reader to \cite{Grunbaum, Grunert, GL} for more details.

\subsection{Polyhedral complexes}

An (affine) hyperplane $H \subseteq \mathbb{R}^n$ is any subset of the form $H = \{\vec{x} \in \mathbb{R}^n \,\,|\,\,\vec{w} \cdot \vec{x} + b = 0\}$ for some $b \in \mathbb{R}$ and nonzero $\vec{w} \in \mathbb{R}^n$. If $H$ has been specified as above by a pair $(\vec{w},b)$, then it is equipped with a co-orientation in the direction of $\vec{w}$, dividing $\mathbb{R}^n$ into two closed half-spaces:
\begin{eqnarray*} \label{eq:coorient}
	H^+ &:=& \{\vec{x} \in \mathbb{R}^n \,\,|\,\, \vec{w} \cdot \vec{x} + b \geq 0\}\\
	H^- &:=& \{\vec{x} \in \mathbb{R}^n \,\,|\,\,  \vec{w} \cdot \vec{x} + b \leq 0\}, 
\end{eqnarray*}
A {\em polyhedral set $\mathcal{P}$ in $\mathbb{R}^n$} is an intersection of finitely many closed half-spaces in $\mathbb{R}^n.$  That is, a polyhedral set is any subset of $\mathbb{R}^n$ that can be written as $H_1^+ \cap \ldots \cap H_k^+$ for some co-oriented hyperplanes $H_1, \ldots, H_k$. A {\em polytope} in $\mathbb{R}^n$ is a bounded polyhedral set. 

Given a polyhedral set $\mathcal{P}$ and hyperplane $H$ in $\mathbb{R}^n$, $H$ is called a {\em cutting hyperplane} of  and is said to \emph{cut} $P$ if $H$ has nonempty intersection with the interior of $P$, and $H$ is called a  {\em supporting hyperplane} of $P$ and is said to {\em support} $P$ if $H$ does not cut $P$ and $H \cap P \neq \emptyset$.   The \emph{dimension} of a polyhedral set  $P$ is the dimension of its \emph{affine hull} $\textrm{aff}(P)$, which is  the  the intersection of all affine-linear subspaces of $\mathbb{R}^n$ that contain $P$. A subset $F$ of a polyhedral set $P$ is said to be a {\em face} of $P$ if either $F = \emptyset$, $F =P $, or $F = H \cap P$ for some supporting hyperplane of $\mathcal{P}$.  

A {\em polyhedral complex} $\mathcal{C}$ of dimension $d$ is a {\em finite}  set 
 of polyhedral sets of dimension $k$, for $0 \leq k \leq d$, called the \emph{cells} of $\mathcal{C}$, such that i)  If $P \in \mathcal{C}$, then every face of $P$ is in $\mathcal{C}$, and ii)  if $P, Q \in \mathcal{C}$, then $P \cap Q$ is a single mutual face of $P$ and $Q$.  A \emph{polytopal complex} is a polyhedral complex in which all the polyhedral cells are polytopes (i.e. bounded). A \emph{simplicial complex} is a polytopal complex in which all polytopes are simplices. Any polytopal complex admits a finite simplicial subdivision, which can, if desired, be constructed without introducing new vertices, cf. \cite[Fact 1.25]{Grunert}. It is immediate that a map that is affine-linear on cells of a simplicial complex is uniquely determined by the images of the vertices ($0$--cells).  The \emph{domain} or \emph{underlying set} $|\mathcal{C}|$ of a polyhedral complex $\mathcal{C}$ is the union of its cells. If $\mathcal{C}$ is a polyhedral complex embedded in $\mathbb{R}^n$ and $|\mathcal{C} | = \mathbb{R}^n$, we call $\mathcal{C}$ a {\em polyhedral decomposition of $\mathbb{R}^n$}. 

The \emph{intersection complex} (c.f. \cite{Grunert} Definition 1.16) $\mathcal{K} \cap \mathcal{L}$ of two polyhedral complexes $\mathcal{K}$ and $\mathcal{L}$ in $\mathbb{R}^n$ is the complex consisting of all pairwise intersections $S \cap T$  of cells $S \in \mathcal{K}$ and $T \in \mathcal{L}$, i.e. $$\mathcal{K} \cap \mathcal{L} \coloneqq \{S \cap T \mid S \in \mathcal{K}, T \in \mathcal{L}\}.$$
By construction,  $|\mathcal{K} \cap \mathcal{L}| = |\mathcal{K}| \cap |\mathcal{L}|$.  Now allowing $\mathcal{K}$ and $\mathcal{L}$ to be embedded in different Euclidean spaces, given a function $f:|\mathcal{K}| \to |\mathcal{L}|$ that is affine-linear on cells of $\mathcal{K}$, the \emph{level set complex} (c.f. \cite{Grunert} Definition 2.6) of $\mathcal{K}$ with range $\mathcal{L}$ is the polyhedral complex 
$$\mathcal{K}_{f \in \mathcal{L}}\coloneqq \{K \cap f^{-1}(L) \mid K \in \mathcal{K}, L \in \mathcal{L}\}.$$ When the map $f$ is clear from context, we will suppress the $f$ and write $\mathcal{K}_{\in \mathcal{L}}$.
By construction, $|\mathcal{K}_{\in \mathcal{L}}| = |\mathcal{K}| \cap f^{-1}(|\mathcal{L}|)$.  In particular, when $\mathcal{L}$ is a interval complex in $\mathbb{R}$ of the form $\{t\}, [t_1,t_2], (-\infty,t]$ or $[t,\infty)$ (together with their faces), we denote the associated level set complexes $\mathcal{K}_{=t}$, $\mathcal{K}_{[t_1,t_2]}$, $\mathcal{K}_{\leq t}$ and $\mathcal{K}_{\geq t}$, respectively. We call $\mathcal{K}_{\leq t}$ (resp. $\mathcal{K}_{\geq t}$) a \emph{sublevel} (resp. \emph{superlevel}) \emph{set complex}. 

\subsection{Feedforward ReLU neural networks: genericity, transversality and canonical polyhedral complex} \label{ss:ReLUNNs}

The ReLU activation function, $\textrm{ReLU}:\mathbb{R} \to \mathbb{R}$, is defined by $\textrm{ReLU}(x) = \max\{0,x\}$; for any $n \in \mathbb{N}$, denote by $\sigma$ the function $\mathbb{R}^n \to \mathbb{R}^n$ that applies $\textrm{ReLU}$ in each coordinate.  
 A fully connected, feedforward ReLU neural network defined on $\mathbb{R}^{n_0}$, $n_0 \in \mathbb{N}$, with one-dimensional output is a finite sequence of natural numbers $n_0,n_1,\dots,n_m$ together with affine-linear maps $A_i:\mathbb{R}^{n_i} \to \mathbb{R}^{n_{i+1}}$ for $i = 0,\dots,m$.  This determines a function 
   \[F:\mathbb{R}^{n_0}  \xrightarrow{F_1 = \sigma \circ A_1} \mathbb{R}^{n_1}  \xrightarrow{F_2 = \sigma \circ A_2} \ldots \xrightarrow{F_m = \sigma \circ A_{m}} \mathbb{R}^{n_m}  \xrightarrow{G = A_{m+1}} \mathbb{R}^1.\]
   Such a neural network is said to  be of
 \emph{architecture} $(n_0,\dots,n_m; 1)$, 
 \emph{depth}  $m+1$, and \emph{width} 
 $\max \{n_1,\ldots,n_m,1\}$.   
  The $k^{\textrm{th}}$ \emph{layer map} of such a neural network is the composition $\sigma \circ A_k$ for $k = 1,\dots,m$ and is the map $G = A_k$ for $k=m+1$.    For each $i \in \{1, \ldots, m\}$ and $j \in \{1, \ldots, n_{i}\}$, the {\em node map}, $F_{i,j}$, is the map  \[\pi_j \circ A_i \circ F_{i-1} \circ \ldots \circ F_1 : \mathbb{R}^{n_0} \rightarrow \mathbb{R},\] where $\pi_j$ denotes projection onto the $j$th coordinate.
 
The standard polyhedral decomposition $\mathcal{C}^{\textrm{std}}(\mathbb{R}^n)$ of $\mathbb{R}^n$ is the polyhedral decomposition induced by the standard coordinate hyperplane arrangement  $\mathcal{A}^{\textrm{std}}$ in $\mathbb{R}^n$ (consisting of the $n$ hyperplanes defined by setting one coordinate to be $0$).  Let \[F: \mathbb{R}^{n_0}  \xrightarrow{F_1} \mathbb{R}^{n_1}  \xrightarrow{F_2} \ldots \xrightarrow{F_m } \mathbb{R}^{n_m}  \xrightarrow{A_{m+1}} \mathbb{R}^1\] be a feedforward ReLU neural network (all layer maps except the last map $A_{m+1}$ have a ReLU factor). 
The reader can easily verify that the \emph{canonical polyhedral complex} $\mathcal{C}(F)$ defined in \cite{GL} has the following description as an intersection complex:
$$\mathcal{C}(F) \coloneqq
\mathcal{I}_{F_1 \in \mathcal{C}^{std}(\mathbb{R}^{n_1})} \cap \mathcal{I}_{F_2 \circ F_1 \in \mathcal{C}^{std}(\mathbb{R}^{n_2})} \cap \ldots \cap \mathcal{I}_{F_m \circ \ldots \circ F_1 \in \mathcal{C}^{std}(\mathbb{R}^{n_m})} $$
where $\mathcal{I}$ is the polyhedral decomposition of $\mathbb{R}^{n_0}$ consisting of a single $n_0$-cell.  By construction, $F$ is affine-linear on cells of $\mathcal{C}(F)$.

The \emph{affine solution set arrangement} associated to a layer map $\sigma \circ A_{i+1} :\mathbb{R}^{n_i} \to \mathbb{R}^{n_{i+1}}$  is the the collection $S$ of sets $S_j \coloneqq \{\vec{x} \in \mathbb{R}^{n_i} :  \pi_j(A(\vec{x}))= 0\}$, for $1 \leq j \leq n_{i+1}$, where $\pi_j$ denotes projection onto the $j$th coordinate.  We call such an affine solution set \emph{generic} if for all subsets $\{S_{i_1} , \ldots , S_{i_p}\} \subseteq S$, it is the case that $S_{i_1} \cap \ldots \cap S_{i_p}$ is an affine linear subspace of $\mathbb{R}^i$ of dimension $i - p$, where a negative-dimensional intersection is understood to be empty. In particular, each element of a generic affine solution set arrangement is a hyperplane. We call the neural network map $F$ \emph{generic} if each of its layer maps has a generic affine solution set arrangement (\cite{GL}).

The $1$-skeleton, $\mathcal{C}(F)_1$, of the canonical polyhedral complex is an embedded linear graph in $\mathbb{R}^{n_0}$.  This graph has a natural partial orientation (see \cite{GL}), which we call the $\nabla F$--orientation: If $F$ is nonconstant on $C$, orient $C$ in the direction in which $F$ increases. If $F$ is constant on $C$, we leave it unlabeled and refer to $C$ as a {\em flat} edge.

We say that a cell $C$ in the canonical polyhedral complex $\mathcal{C}(F)$  is \emph{flat} if $F$ is constant on that cell.  Note that $C \in \mathcal{C}(F)$ is flat if and only if all $1$--dimensional faces of $C$ are unoriented with respect to the $\nabla F$--orientation. \color{black}The following lemma is immediate from the definition of a transversal threshold in \cite{GL}; readers unfamiliar with \cite{GL} may take this lemma as the definition of a nontransversal threshold.

\begin{lemma} \label{lem:flatNT} For a neural network function $F: \mathbb{R}^n \rightarrow \mathbb{R}$, a threshold $t \in \mathbb{R}$ is \emph{nontransversal} if and only if it is the image of a flat cell $C$ in the canonical polyhedral complex $\mathcal{C}(F)$.
\end{lemma}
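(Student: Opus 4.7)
The plan is to unpack the definition of ``transversal threshold'' from \cite{GL} and check the stated equivalence by direct inspection; the authors already flag this lemma as essentially definitional, so the real content is just aligning terminology between the two papers. In \cite{GL}, a threshold $t \in \mathbb{R}$ is called transversal when the preimage $F^{-1}(t)$ meets each cell $C \in \mathcal{C}(F)$ either trivially or in a set of codimension one in $C$; equivalently, no cell of $\mathcal{C}(F)$ is mapped by $F$ to the single value $t$. Nontransversality is the negation, so the first step is simply to restate this characterization.

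Next, since $F$ is affine-linear on every cell $C \in \mathcal{C}(F)$, the restriction $F|_C$ either (a) is nonconstant, in which case for each $t \in F(C)$ the intersection $F^{-1}(t) \cap C$ is the zero set of a nonconstant affine function on $C$ and hence has codimension one in $C$, or (b) is constant, in which case $C$ is by definition flat and $F^{-1}(t) \cap C$ equals either $C$ or $\emptyset$ depending on whether $t$ is the common value $F(C)$.

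With this dichotomy in hand, both directions are immediate. For the forward implication, if $t$ is nontransversal then by definition some cell $C$ violates the codimension-one (or empty) condition, and by the dichotomy above this forces $F|_C$ to be constant with value $t$, so $C$ is flat and $t \in F(C)$. Conversely, if $C$ is a flat cell with $F(C) = \{t\}$, then $F^{-1}(t) \cap C = C$, which is not codimension one in $C$, so $t$ is nontransversal. The only ``obstacle'' is notational: one must be careful that the convention in \cite{GL} indeed treats cells on which $F$ is constant as the only source of nontransversality (as opposed to, e.g., also flagging thresholds hit by vertices), but the setup in Section \ref{s:polybackground} together with the $\nabla F$-orientation discussion above makes this identification unambiguous.
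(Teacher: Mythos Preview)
Your proposal is correct and matches the paper's treatment: the paper gives no proof at all, stating only that the lemma ``is immediate from the definition of a transversal threshold in \cite{GL}'' and that readers may take it as a definition, so your explicit unpacking of the definition is exactly the intended argument. Your closing caveat about vertices is unnecessary, since $0$--cells are always flat (as noted just after Definition~\ref{defn:flatcell}), and hence thresholds hit by vertices are already covered by the ``image of a flat cell'' clause.
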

\color{black}

\noindent A neural network map 
\[F:\mathbb{R}^{n_0}  \xrightarrow{F_1 = \sigma \circ A_1} \mathbb{R}^{n_1}  \xrightarrow{F_2 = \sigma \circ A_2} \ldots \xrightarrow{F_m = \sigma \circ A_{m}} \mathbb{R}^{n_m}  \xrightarrow{G = A_{m+1}} \mathbb{R}^1.\]
is called \emph{transversal} (\cite{GL}) if, for each $i \in \{1,\ldots,m\}$ and each $j \in \{1,\ldots,n_i\}$, $t=0$ is a transversal threshold for the node map $F_{i,j}:\mathbb{R}^{n_0} \to \mathbb{R}$.  

 \color{black}

The following lemma was essentially proved in \cite{GL}:

\begin{lemma} \label{lem:polysubcpx} Let $F: \mathbb{R}^n \rightarrow \mathbb{R}$ be a ReLU neural network function and $a \in \mathbb{R}$ a threshold. Then the level set $F_{=a}$, the sublevel set $F_{\leq a}$, and the superlevel set $F_{\geq a}$ are domains of imbedded polyhedral complexes in $\mathbb{R}^n$. Moreover, if $F$ is generic and transversal and $a \in \mathbb{R}$ is a transversal threshold, then $F_{=a}$ has dimension $n-1$. 
\end{lemma}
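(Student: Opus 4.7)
The plan is to use the level set complex construction recalled in Section~\ref{s:polybackground} for the first claim, and to leverage transversality --- combined with the fact that faces of cells in $\mathcal{C}(F)$ are themselves cells of $\mathcal{C}(F)$ --- to pin down the dimension.

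For the polyhedral structure, consider the polyhedral decomposition $\mathcal{L}_a$ of $\mathbb{R}$ whose cells are $\{a\}$, $(-\infty,a]$, and $[a,\infty)$. Since $F$ is affine-linear on each cell of $\mathcal{C}(F)$, the level set complex $\mathcal{C}(F)_{F \in \mathcal{L}_a}$ is a polyhedral complex in $\mathbb{R}^n$ with underlying set $\mathbb{R}^n$. Its subcomplexes whose cells map to $\{a\}$, to $(-\infty,a]$, and to $[a,\infty)$ have underlying sets exactly $F_{=a}$, $F_{\leq a}$, and $F_{\geq a}$, respectively, establishing the first claim.

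For the dimension claim, assume $F$ is generic and transversal, $a$ is a transversal threshold, and (else the claim is vacuous) $F_{=a}$ is nonempty. By Lemma~\ref{lem:flatNT}, $a$ is not in the image of any flat cell of $\mathcal{C}(F)$. The upper bound $\dim F_{=a} \leq n-1$ is then immediate: an $n$-dimensional piece of $F_{=a}$ would be a flat $n$-cell mapping to $a$. For the matching lower bound, pick $x \in F_{=a}$ and an $n$-cell $C$ of $\mathcal{C}(F)$ containing $x$; since $C$ cannot be flat with image $a$, the restriction $F|_C$ is non-constant affine-linear, so $F(C)$ is a non-degenerate interval.

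The heart of the argument is to show that $a$ lies in the relative interior of $F(C)$, so that the affine hyperplane $\{F|_C = a\}$ cuts the interior of $C$ and yields an $(n-1)$-dimensional polyhedron. If instead $a$ were an endpoint of $F(C)$, then $\{F|_C = a\}$ would be a supporting hyperplane of $C$, and $C \cap \{F|_C = a\}$ would be a proper face of $C$ on which $F$ is constantly equal to $a$. Such a face is itself a cell of $\mathcal{C}(F)$ (polyhedral complexes are closed under taking faces), and hence a flat cell of $\mathcal{C}(F)$ mapping to $a$, contradicting transversality. I expect this extremum elimination to be the main obstacle, as it is where the polyhedral complex structure of $\mathcal{C}(F)$ and the precise meaning of transversality captured by Lemma~\ref{lem:flatNT} have to be used together; the genericity assumption enters behind the scenes to guarantee that the top-dimensional cells are well-behaved and that the affine extensions of $F|_C$ have the hyperplane structure the argument requires.
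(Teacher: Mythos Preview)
Your argument is correct. For the first claim you use exactly the level set complex construction the paper invokes (the paper phrases it as a citation to \cite[Lem.~2.5]{Grunert}, but the content is identical).

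For the dimension claim your route differs from the paper's: the paper simply cites \cite[Lem.~5.6]{GL}, whereas you give a direct argument using only that $\mathcal{C}(F)$ is a polyhedral decomposition of $\mathbb{R}^n$ and that $a$ is transversal. Your argument is more self-contained and arguably more transparent: the upper bound is immediate, and for the lower bound the key observation---that if $a$ were an endpoint of $F(C)$ then the supporting hyperplane $\{F|_C = a\}$ would cut out a proper face of $C$, which is itself a flat cell of $\mathcal{C}(F)$ with image $a$---is exactly the right use of the polyhedral complex axioms together with Lemma~\ref{lem:flatNT}. One small correction: your closing remark that genericity ``enters behind the scenes'' is not accurate for your own argument. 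As written, your proof uses only that $\mathcal{C}(F)$ is a polyhedral decomposition of $\mathbb{R}^n$ (so every point lies in some $n$-cell) and that $a$ is a transversal threshold; genericity and transversality of $F$ itself play no role in the steps you wrote down. The paper carries those hypotheses because the cited result from \cite{GL} is stated under them, but your direct argument shows the dimension conclusion holds more generally.
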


\begin{proof} $F$ is linear on cells of the canonical polyhedral complex $\mathcal{C}(F)$, so \cite[Lem. 2.5]{Grunert} tells us that $F_{=a}$ (resp., $F_{\leq a}$ and $F_{\geq a}$) is the domain of the polyhedral subcomplex $\mathcal{C}(F)_{\in \{a\}}$ (resp., $\mathcal{C}(F)_{\in \{(-\infty,a]\}}$ and $\mathcal{C}(F)_{\in \{[a, \infty)\}}$ in the level set complex associated to the polyhedral decomposition \[\{(-\infty,a], \{a\}, [a, \infty)\}\] of the codomain, $\mathbb{R}$.  If $F$ is generic and transversal and $a$ is a transversal threshold, \cite[Lem. 5.6]{GL} tells us $\mathcal{C}(F)_{\in \{a\}}$ has dimension $n-1$, as desired. 
\end{proof}

\section{PL Morse theory and flat versus critical cells} \label{sec:PLMorse}

As indicated in the introduction, studying the topology of sublevel sets of finite PL functions is quite a bit more involved than it is in the smooth case. The purpose of this section is to introduce the notion of a PL Morse function (using the definition from \cite{GrunertRote}, see also \cite{Grunert}) and prove that for each architecture of depth $\geq 2$, a positive measure subset of ReLU neural network functions fail to be PL Morse (Definition \ref{defn:PLMorse}). 

Precisely, we have:

\begin{theoremNotPLMorse} Let $F: \mathbb{R}^n \rightarrow \mathbb{R}$ be a ReLU neural network map with hidden layers of dimensions $(n_1, \ldots, n_\ell)$. The probability that $F$ is  PL Morse is 
\[\left\{\begin{array}{cl} = \frac{\sum_{k=n+1}^{n_1} {n_1 \choose k}}{2^{n_1}} & \mbox{if $\ell = 1$}\\
		 \leq \frac{\sum_{k=n+1}^{n_1} {n_1 \choose k}}{2^{n_1}}  & \mbox{if $\ell > 1$}\end{array}\right.\]
\end{theoremNotPLMorse}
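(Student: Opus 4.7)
The plan is to characterize the positive-dimensional flat cells of $\mathcal{C}(F)$ via a combinatorial zero-chamber condition on the first-layer arrangement, and then to compute the probability of that condition by a symmetry argument plus Zaslavsky's chamber count.

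For the base case $\ell=1$ the network is $F(x) = A_2\,\sigma(A_1 x + b_1) + b_2$ and $\mathcal{C}(F)$ coincides with the affine arrangement $\mathcal{A}$ of the $n_1$ first-layer hyperplanes $H_i = \{w_i \cdot x + b_{1,i} = 0\}$, where $w_i$ denotes the $i$-th row of $A_1$. Each cell $C$ of $\mathcal{A}$ is indexed by a subset $J \subseteq \{1,\dots,n_1\}$ (the hyperplanes it lies on) together with a sign vector on $\{1,\dots,n_1\}\setminus J$; let $S'$ be the positive coordinates of that sign vector. On $C$ the function $F$ is the restriction of an affine map with gradient $\sum_{i \in S'}(A_2)_i\,w_i$, so $C$ is flat iff this gradient lies in $\mathrm{span}\{w_j : j \in J\}$. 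A short dimension count shows that, for weights with any absolutely continuous distribution, this condition almost surely fails whenever $S' \neq \emptyset$: when $|S' \cup J| \le n$ the relevant $w$'s are linearly independent and flatness forces $(A_2)_i = 0$ on $S'$, while when $|S' \cup J| > n$ flatness cuts out a proper subspace of $A_2$-space. So almost surely the positive-dimensional flat cells are exactly the positive-dimensional cells of $\mathcal{A}$ sitting inside the zero region $Z := \{x : w_i \cdot x + b_{1,i} \le 0 \text{ for all } i\}$, and for generic biases $Z$ is either empty or full-dimensional. Hence, almost surely, $F$ fails to be PL Morse iff the all-negative sign chamber of $\mathcal{A}$ is realized.

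I would then invoke the standard symmetry hypothesis that the joint distribution of $(w_i, b_{1,i})$ is invariant under $(w,b) \mapsto (-w,-b)$ independently for each $i$ (e.g.\ any mean-zero symmetric initialization). Flipping a subset of these orientations permutes the set of realized chamber sign-vectors, so all $2^{n_1}$ sign vectors are realized as chambers with equal probability. By Zaslavsky's classical count, a generic affine arrangement of $n_1$ hyperplanes in $\mathbb{R}^n$ has $\sum_{k=0}^n \binom{n_1}{k}$ chambers, so each specific sign vector appears with probability $\sum_{k=0}^n \binom{n_1}{k}/2^{n_1}$; taking the complement gives the stated equality for $\ell=1$. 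For $\ell > 1$ I would observe that the same event---$Z$ being non-empty---already forces $F$ to fail PL Morse: on $Z$ one has $F_1 \equiv 0$, so $F$ is the constant $A_{\ell+1}(F_\ell(\cdots F_2(0)))$ on $Z$; and for generic deeper weights, $0 \in \mathbb{R}^{n_1}$ avoids the zero set of every deeper node map, so no bent hyperplane from layers $\ge 2$ subdivides $Z$ and it persists as a single $n$-dimensional flat cell of $\mathcal{C}(F)$. Thus $\Pr(F \text{ not PL Morse}) \ge \Pr(Z \neq \emptyset) = \sum_{k=0}^n \binom{n_1}{k}/2^{n_1}$, which is exactly the upper bound claimed.

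The main obstacle will be the genericity analysis of the first step: one must carefully dispose of the probability-zero exceptional loci---cells with $S' \neq \emptyset$ that accidentally become flat, $Z$ non-empty but of intermediate dimension, and vanishing deeper-layer biases that could place $0 \in \mathbb{R}^{n_1}$ on a later bent hyperplane---under whatever precise probability model Definition~\ref{defn:PLMorse} pins down. Once that bookkeeping is in place, both cases reduce cleanly to Zaslavsky's chamber formula combined with the sign-flip symmetry.
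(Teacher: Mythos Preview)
Your reduction to the zero-chamber event and the Zaslavsky/sign-flip probability computation match the paper's approach closely, and your gradient dimension-count for ruling out flat cells with $S' \neq \emptyset$ is a clean alternative to the paper's geometric Lemma~\ref{l:onelayersendstopoint}. The $\ell>1$ inequality is handled the same way in both arguments.

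However, there is a genuine gap in the $\ell=1$ equality. You assert that ``$F$ fails to be PL Morse iff the all-negative sign chamber of $\mathcal{A}$ is realized,'' but you only justify one direction. The converse---that when $Z$ is empty (equivalently, when there are no positive-dimensional flat cells) the function $F$ \emph{is} PL Morse---is not automatic: as the paper notes after Definition~\ref{defn:GeneralPL}, a general PL map (all flat cells $0$-dimensional) need not be PL Morse. One must still verify that every $0$-cell of $\mathcal{C}(F)$ is either PL regular or PL nondegenerate-critical. The paper closes this gap by invoking Proposition~\ref{p:stdmodel}, which gives a combinatorial classification of vertices of a generic depth-$2$ network via the $\nabla F$ orientations on adjacent $1$-cells and shows that a vertex is PL degenerate-critical only when it has a flat $1$-dimensional coface. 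That proposition relies on Grunert's local-equivalence criterion and a careful stellar-subdivision argument; it is not a triviality, and your proposal neither states nor proves an analogue. Without it, you have only established the inequality $\Pr(F\text{ PL Morse}) \leq \frac{\sum_{k=n+1}^{n_1}\binom{n_1}{k}}{2^{n_1}}$ in the $\ell=1$ case, not the claimed equality.
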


Indeed, we believe that the probability that a given ReLU neural network function is non PL Morse grows with depth. Thus, studying the topology of the sublevel sets of ReLU neural network functions requires establishing more flexible results governing how the sublevel set varies with the threshold.  We will introduce suitable notions for our purposes in Section \ref{sec:Hcompxty}.  

Unsurprisingly, the key to understanding how the topology of sublevel sets changes as one varies the threshold are the PL analogues of points where the gradient of the function vanishes, cf. Theorem \ref{thm:HomEquivTransversal}. These are the so-called {\em flat} or {\em constant} cells (Definition \ref{defn:flatcell}), which map to {\em nontransversal thresholds} (cf. Lemma \ref{lem:flatNT}). Indeed, one should view the analogues of critical cells (both Morse and non-Morse) in the PL category as an appropriate subset of the flat cells. For experts familiar with the ways ReLU neural network functions can degenerate, Lemma \ref{lem:Flatncell} should provide a clear explanation for why most ReLU neural network functions are not PL Morse. 

\subsection{PL $n$--manifolds and PL Morse functions}
We begin with the following preliminaries about PL $n$--manifolds and finite PL maps.

\begin{definition} Let $U \subseteq \mathbb{R}^n, V \subseteq \mathbb{R}^m$ be open sets. A function $f: U \rightarrow V$ is said to be {\em piecewise affine-linear} (PL) if it is continuous and there is a locally-finite decomposition $U = \bigcup_{i \in I} K_i$ into connected, closed subsets $K_i \subseteq U$ with respect to which $f|_{K_i}$ is affine-linear. 
\end{definition}

Recall that a decomposition $U = \bigcup_{i \in I} K_i$ is {\em locally finite} if every point $x \in U$ has a neighborhood meeting finitely many $K_i$. A {\em PL homeomorphism} is a bijective PL function whose inverse is PL.

\begin{definition} A topological $n$--manifold $M$ is a PL $n$--manifold if it is equipped with an open covering $M = \bigcup_{i \in I} M_i$ and coordinate charts $\psi_i: M_i \rightarrow (U_i \subseteq \mathbb{R}^n)$ to open subsets of $\mathbb{R}^n$ for which all transition maps \[\psi_i \circ \psi_j^{-1}: \psi_j(U_i \cap U_j) \rightarrow \psi_i(U_i \cap U_j)\] are PL homeomorphisms.
\end{definition}

\begin{definition} \label{defn:finitePL} Let $M$ be a PL $n$--manifold. A continuous map $F: M \rightarrow \mathbb{R}^m$ is said to be {\em finite PL} if $M$ admits a finite polyhedral decomposition with respect to which $F$ is  affine-linear on cells
 with respect to the standard PL structure on the codomain, $\mathbb{R}^m$. 
That is, there is an imbedded finite polyhedral complex $\mathcal{C} \subseteq M$ and a compatible finite atlas $\{(M_i, \psi_i)\}_{i \in I}$ for $M$ for which 
\begin{itemize}
	\item $|\mathcal{C}| = M$, and
	\item for every $n$--cell $C$ of $\mathcal{C}$ there exists some $M_i$ such that $C \subset M_i$ and the map $(F \circ \psi_i^{-1})|_{\psi_i(C)}$ is affine-linear.
\end{itemize}
\end{definition}

\color{black}

\begin{remark} ReLU neural network functions $F: \mathbb{R}^n \rightarrow \mathbb{R}$ are finite PL with respect to the standard PL structures on $\mathbb{R}^n$ and $\mathbb{R}$. 
\end{remark}

We are now ready for the following definition, which is adapted from \cite[Defn. 4.1]{GrunertRote}.

\begin{definition} \label{defn:PLMorse} Let $M$ be a PL $n$--manifold and $F: M \rightarrow \mathbb{R}$ a finite PL map. 
\begin{itemize} 
	\item A point $p \in M$ is called {\em PL regular} for $F$ if there is a chart around $p$ such that the function $F$ can be used as one of the coordinates on the chart, i.e., if in those coordinates $F$ takes the form \[F_{\tiny reg}(x_1, \ldots, x_n) := F(p) +x_n.\]
	\item A point $p \in M$ is called {\em PL nondegenerate-critical} for $F$ if there is a PL chart around $p$ such that in those coordinates the function $F$ can be expressed as \[F_{\tiny crit,k}(x_1, \ldots, x_n) := F(p) -|x_1| - \ldots - |x_k| + |x_{k-1}| + \ldots + |x_n|.\] The number $k$ is then uniquely determined and is called the {\em index} of $p$. 
	\item If $p \in M$ is neither PL regular nor PL nondegenerate-critical, it is said to be \emph{PL degenerate-critical}.
	\item The function $F$ is called a {\em PL Morse function} if all points are either PL regular or PL nondegenerate-critical. 
\end{itemize}
\end{definition}

\begin{remark} If a point $p \in M$ is PL regular (resp., PL nondegenerate-critical of index $k$) for $F$, then the function $F$ is {\em locally-equivalent} at $p$ to $F_{\tiny reg}$ (resp., to $F_{\tiny crit, k}$) in the language of \cite[Defn. 3.1]{Grunert}. The interested reader may also wish to consult \cite[Sec. 3.2.5]{Grunert}, which reviews the relationship with previous notions due to Brehm-K{\"u}hnel, Eells-Kuiper, and Kosinski.
\end{remark}

\subsection{Flat cells, nontransversal thresholds, and general maps}

Recall the following terminology from \cite{GL}, which can be traced back to Banchoff \cite{Banchoff}:

\begin{definition} \label{defn:flatcell} Let $F: |M| \rightarrow \mathbb{R}$ be linear on the cells of a polyhedral complex, $M$.
A cell $C \in M$ satisfying $F(C) = \{t\}$ for some $t \in \mathbb{R}$ will be called a {\em flat} cell or a {\em constant} cell for $F$.
\end{definition}

\begin{remark} Note that all $0$--cells of $M$ are flat. \end{remark}

It is immediate that any face of a flat cell will also be flat, so the set of flat cells relative to a map $F: |M| \rightarrow \mathbb{R}$ forms a subcomplex of $M$, which we will denote $\MflatF$.

The following related definition, which appears frequently in the literature, is also due to Banchoff \cite{Banchoff}:

\begin{definition} \label{defn:GeneralPL} A map $F: |M| \rightarrow \mathbb{R}$ that is linear on cells of a polyhedral complex $M$ is said to be {\em general} if 
\begin{enumerate}
	\item $F(v) \neq F(w)$ for $0$--cells $v \neq w$ and 
	\item all flat cells are $0$--dimensional.
\end{enumerate}
\end{definition}

\begin{remark} \label{rmk:}  In \cite{Banchoff}, Banchoff restricts his attention to polytopal complexes and defines a map that is linear on cells of such a complex to be {\em general} if it satisfies condition (i) above, which implies condition (ii) in the bounded case. Since the complexes we consider will have unbounded cells, we add condition (ii). Note that we could equivalently have replaced condition (ii) with the condition that every unbounded cell has unbounded image. 
\end{remark}

\begin{remark} Note that while all flat cells of a general PL map are $0$-dimensional, not every PL map whose flat cells all have dimension $0$ is general. Note also that a general PL map need not be PL Morse. See, e.g., the example in \cite[Sec. 2]{Banchoff}.
\end{remark}

\begin{lemma} \label{lem:Flatncell} Let $M$ be a PL $n$--manifold, and let $F: M \rightarrow \mathbb{R}$ be a finite PL map. If $F$ has a flat cell $C$ of dimension $n$, then every point in $C$ is PL degenerate-critical, and hence $F$ is not PL Morse.
\end{lemma}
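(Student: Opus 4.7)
The plan is to show that no point $p \in C$ can satisfy either of the two local normal forms in Definition~\ref{defn:PLMorse}, by exploiting the single structural fact that $F$ is constant on a nonempty open subset of every chart neighborhood of $p$, while neither $F_{\tiny reg}$ nor $F_{\tiny crit,k}$ has this property.

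First I would fix $p \in C$ and any PL chart $\psi: U \to V \subseteq \mathbb{R}^n$ around $p$ in $M$. Since $C$ is an $n$-dimensional polyhedral cell in the $n$-manifold $M$, $C$ equals the closure of $\interior(C)$, so $U \cap \interior(C)$ is a nonempty open subset of $U$. Because $C$ is flat, $F \equiv F(p)$ on this set, and therefore $\psi(U \cap \interior(C))$ is a nonempty open subset of $V$ on which $F \circ \psi^{-1}$ is constantly equal to $F(p)$. This is the one fact I will play against both standard local models.

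Next I would rule out $p$ being PL regular. If it were, by a possibly further PL change of chart we could assume $F \circ \psi^{-1}(x_1, \ldots, x_n) = F(p) + x_n$. Constancy on the open set $\psi(U \cap \interior(C))$ would force $x_n \equiv 0$ there, contradicting the fact that the hyperplane $\{x_n = 0\}$ has empty interior in $\mathbb{R}^n$. The same strategy handles the PL nondegenerate-critical case: in suitable PL coordinates we would have $F \circ \psi^{-1}(x_1,\ldots,x_n) = F(p) - |x_1| - \cdots - |x_k| + |x_{k+1}| + \cdots + |x_n|$, and its zero level set is a PL cone of dimension at most $n-1$, hence has empty interior, contradicting constancy on an open set. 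It follows that $p$ is PL degenerate-critical, and since this holds for any $p \in C$, the map $F$ is not PL Morse.

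The only subtle point is making sure the argument applies to boundary points of $C$ as well as interior ones; this is settled by the observation that in an $n$-manifold an $n$-dimensional polyhedral cell is the closure of its interior, so every open neighborhood of any $p \in C$ meets $\interior(C)$ in a nonempty open set. Once that is in hand, the proof reduces to the transparent fact that the two standard PL Morse local models, $F_{\tiny reg}$ and $F_{\tiny crit,k}$, are nowhere locally constant on an open set.
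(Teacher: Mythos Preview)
Your proof is correct and follows essentially the same route as the paper's: both argue that in either local model the level set $\{F = F(p)\}$ is at most $(n-1)$--dimensional (equivalently, has empty interior), whereas a flat $n$--cell forces that level set to contain a nonempty open set near $p$. Your write-up is a bit more explicit in handling boundary points of $C$ via $C = \overline{\interior(C)}$, but the underlying idea is identical.
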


\begin{proof} By definition, if $p \in M$ is either PL regular or PL non-degenerate critical for $F$, and $F(p) = a$, then $p$ has a neighborhood $N_p$ that is PL homeomorphic to a neighborhood of the origin in $\mathbb{R}^n$, and the level sets of the induced map are PL homeomorphic. By examining $F_{=0}$ in each of the two local models, one concludes that the intersection of $F_{=a}$ with $N_p$ is therefore locally PL homeomorphic to an imbedded polyhedral complex of $N_p$ of dimension $n-1$. 

But if $p$ is contained in a flat cell of dimension $n$, then the intersection of $F_{=a}$ with every neighborhood of $p$ contains an $n$--dimensional ball, a contradiction.
\end{proof}

\color{black}
\begin{figure}
	\includegraphics[width=2in]{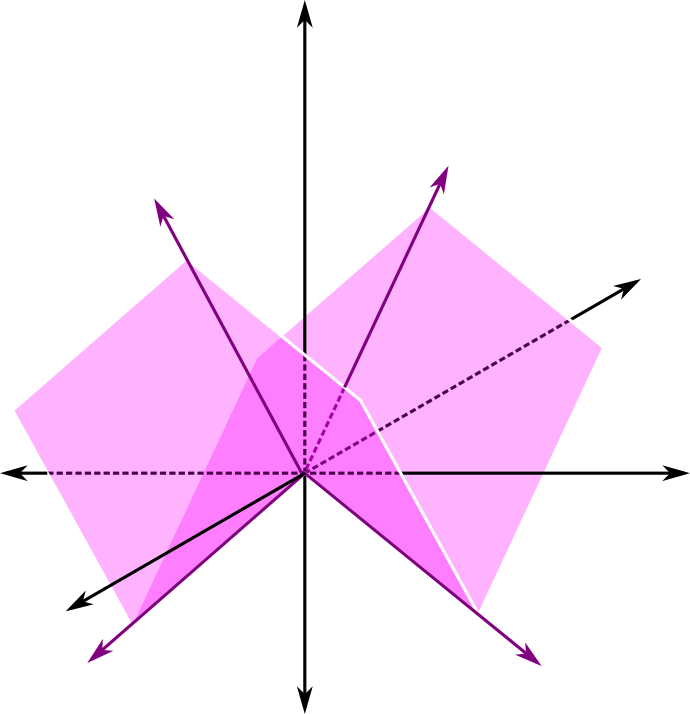}	\includegraphics[width=2in]{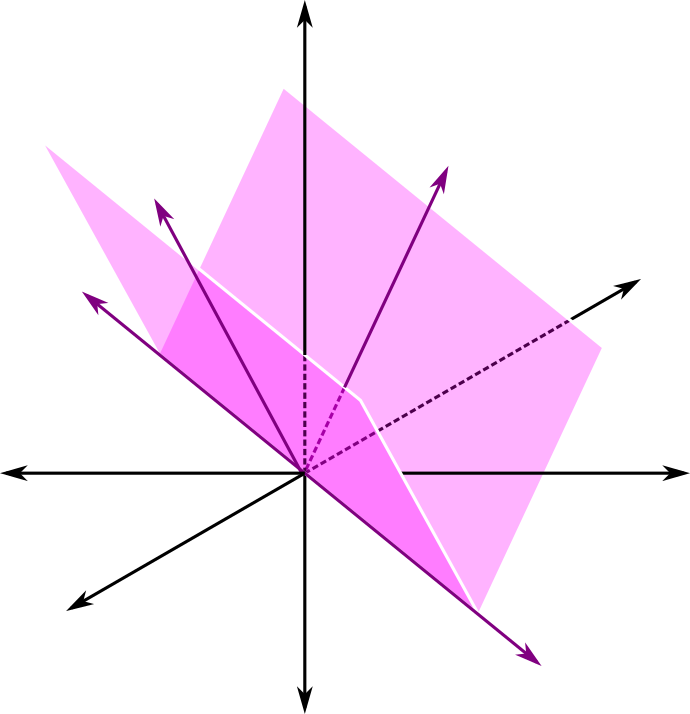}
	\caption{\textit{Left:} A PL non-degenerate critical point of index 1 of a function $\mathbb{R}^2 \to \mathbb{R}$. \textit{Right:} A PL strongly regular point.}
	\label{f:criticalpoints}
\end{figure}

\subsection{Flat cells for ReLU layer maps and proof of Theorem \ref{thm:NotPLMorse}}

To prove Theorem \ref{thm:NotPLMorse}, we briefly investigate how flat cells arise in ReLU neural network functions. 

It will be convenient to note that if $F$ is a ReLU neural network function of architecture $(n_0, \ldots, n_m; 1)$, 

then every cell $C$ of $\mathcal{C}(F)$ 
admits a labeling by a sequence of ternary tuples 
$$\theta_C := \left(\theta^{(1)}, \ldots, \theta^{(m)}\right) \in \{-1,0,1\}^{n_1+ \ldots + n_m}$$ indicating the sign of the pre-activation output of each neuron. 

Explicitly, if $x = x^{(0)} \in \mathbb{R}^{n_0}$ is an input vector, and we denote by 
$$z^{(\ell)}:= (A^{(\ell)} \circ F^{(\ell-1)} \circ \ldots \circ F^{(1)})(x) \in \mathbb{R}^{n_\ell}$$ the pre-activation output of the first $\ell$ layer maps, then the components of $\theta^{(\ell)}_C = \left(\theta^{(\ell)}_1,\,\, \ldots \,\,,\theta^{(\ell)}_{n_\ell}\right)$ are defined by $\theta^{(\ell)}_i = \mbox{sgn}(z_i^{(\ell)})$. For generic, transversal ReLU networks, this assignment agrees with the {\em activation patterns} defined in \cite{HaninRolnick}.

It is immediate that if $C$ is a cell for which the components of $\theta^{(\ell)}_C$ are all non-positive for some $\ell$, then $C$ is flat. The following lemma tells us that the converse holds for cells of generic, transversal neural networks with a single ReLU layer.

\begin{lemma} \label{l:onelayersendstopoint}
Let $F: \mathbb R^d \to \mathbb R^n$ be a generic ReLU neural network layer map with associated (generic) co-oriented hyperplane arrangement 
$\mathcal A :=\{H_1,... , H_n \}$.  Denote by $\CA$ the associated polyhedral decomposition of $\mathbb{R}^d$ and let $C\in\mathcal{C}(\mathcal{A})$ be a $k$-cell, for any $1\leq k \leq d$.  
If any coordinate of $F$ is constant on $C$ (i.e. $\pi_j \circ F(C)$ is a point),  then $F(C) = \vec{0}$. Furthermore, $C$ is a face of a $d$--cell with ternary labeling $(-1, \ldots, -1) \in \{-1,0,1\}^n$.
\end{lemma}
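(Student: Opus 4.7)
The plan is to leverage genericity of $\mathcal{A}$ to rule out any nonzero coordinate in the constant value $F(C)$, and then realize $C$ as a face of the unique $d$-cell on which every pre-activation is strictly negative. Writing $F = \sigma \circ A$ with $A(x)_j = w_j \cdot x + b_j$, set $I = \{i : C \subseteq H_i\}$. Genericity pins down this index set: $|I| = d-k$, the normals $\{w_i\}_{i \in I}$ are linearly independent, and $\mathrm{aff}(C) = \bigcap_{i \in I} H_i$. Moreover, for any $\ell \notin I$, the generic codimension hypothesis applied to $I \cup \{\ell\}$ (of size $d-k+1 \leq d$, using $k \geq 1$) yields linear independence of $\{w_i\}_{i \in I} \cup \{w_\ell\}$, so $w_\ell \notin \mathrm{span}\{w_i : i \in I\}$, and hence $A(x)_\ell$ is a non-constant affine function on $\mathrm{aff}(C)$.

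To show $F(C) = \vec 0$, I would suppose $F \equiv \vec c$ on $C$ with each $c_j \geq 0$, and assume for contradiction that some $c_j > 0$. Then $A(x)_j > 0$ throughout $C$, so $\sigma$ acts as the identity in the $j$-th coordinate and $A(x)_j \equiv c_j$ on $C$. If $j \in I$, this contradicts $A(x)_j \equiv 0$ on $C$; if $j \notin I$, it contradicts the non-constancy of $A(x)_j$ on $\mathrm{aff}(C)$ established above. Either way produces a contradiction, so every $c_j = 0$, i.e.\ $F(C) = \vec 0$.

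For the furthermore statement, I would begin from $A(x)_\ell \leq 0$ on $C$ for all $\ell$, a direct consequence of $F \equiv \vec 0$. For $\ell \notin I$, the cell $C$ has some sign $\theta_\ell \in \{-1,+1\}$ with respect to $H_\ell$, and the constraint $A(x)_\ell \leq 0$ forces $\theta_\ell = -1$, i.e.\ $A(x)_\ell < 0$ strictly on $\mathrm{relint}(C)$. I would then produce the required $d$-cell $D$ by picking $x_0 \in \mathrm{relint}(C)$ and using linear independence of $\{w_i\}_{i \in I}$ to select a direction $v$ satisfying $v \cdot w_i = -1$ for every $i \in I$. For sufficiently small $\epsilon > 0$, the point $x_\epsilon = x_0 + \epsilon v$ has $A(x_\epsilon)_i = -\epsilon < 0$ for $i \in I$, and $A(x_\epsilon)_\ell < 0$ for $\ell \notin I$ by continuity, so $x_\epsilon$ lies in an open $d$-cell $D$ with ternary labeling $(-1,\ldots,-1)$. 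Finally $C \subseteq \overline{D} \cap \bigcap_{i \in I} H_i$, which exhibits $C$ as a face of $D$.

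The main obstacle is the first, purely linear-algebraic step: extracting, from the generic codimension hypothesis on the arrangement, both the precise structure $\mathrm{aff}(C) = \bigcap_{i \in I} H_i$ and the non-constancy of each $A(x)_\ell$ on $\mathrm{aff}(C)$ for $\ell \notin I$. This is what prevents a positive constant value on any positive-dimensional cell; once it is in hand, the remainder of the proof is a routine perturbation argument in convex geometry.
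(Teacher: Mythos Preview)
Your argument is correct and follows essentially the same route as the paper: both derive a contradiction from a putative positive constant coordinate $c_j$ by showing that $\mathrm{aff}(C)$ would then be parallel to $H_j$, contradicting the generic codimension condition on $I\cup\{j\}$. You phrase this via linear independence of the normals $\{w_i\}$ (so that $A(\cdot)_j$ is nonconstant on $\mathrm{aff}(C)$), whereas the paper phrases it geometrically (the intersection $H_j\cap\bigcap_{i\in I}H_i$ cannot be empty); these are dual statements of the same fact. Two minor streamlinings in your version are worth noting: your framework with $I=\{i:C\subseteq H_i\}$ handles the case $k=d$ uniformly (the paper treats it separately by passing to a facet), and your explicit perturbation $x_\epsilon=x_0+\epsilon v$ with $v\cdot w_i=-1$ also certifies that the $(-1,\ldots,-1)$ region is genuinely $d$-dimensional, a point the paper leaves implicit.
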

\color{black}

\begin{proof} 
First consider the case $1 \leq k < d$. 
	By the genericity of $\mathcal{A}$, the affine hull of $C$ is uniquely determined by some intersection of hyperplanes from $\mathcal A$, that is, $C \subset H_{i_1} \cap H_{i_2}... \cap H_{i_{d-k}}$. Suppose, by way of contradiction, that $C$ is sent to a nonzero constant in some coordinate corresponding to a hyperplane $H$ in $\mathcal{A}$. This precisely means that $C \in H^+$ and, additionally, for each $x \in C$, the distance $d(x, H)$ is constant and nonzero. Thus the affine hull of $C$ is contained in the hyperplane parallel to $H$ at a fixed distance.
	 This implies that $H \cap (H_{i_1}... \cap H_{i_{d-k}}) = \emptyset$.  However, this is an intersection of $d-k+1 \leq d$ hyperplanes in $\mathcal A$, and by genericity cannot be empty. This is a contradiction, and therefore if $C$ is sent to a constant, it must be sent to $0$ in all coordinates. Thus $C$ is in the intersection of the closed half spaces $H_i^-$  for all hyperplanes $H_i$ forming the arrangement $\mathcal{A}$, and must be a face of the cell with the ternary labeling $(-1,\ldots, -1)$. 
	 
	Now consider the case $k=d$, and again suppose that $C$ is sent to a nonzero constant in some coordinate corresponding to a hyperplane $H$ in $\mathcal{A}$.  Since $\mathcal{A}$ is nonempty, $C$ is not all of $\mathbb{R}^d$.  Hence $C$ contains a lower dimensional facet, $C'$.  Then $F$ sends $C'$ to the same nonzero constant in the coordinate corresponding to $H$.  But by the result of the previous case, $F(C') = \vec{0}$. 
	 	 Lastly, if $C$ were contained in $H_i^+$ for some $i$, then $F(C)$ would be nonzero in some coordinate, which is a contradiction. Thus $C$ has the ternary labeling $(-1,\ldots,-1)$.
	 \color{black}

\end{proof}

\begin{remark} \label{r:genericitynecessary}
The assumption in Lemma \ref{l:onelayersendstopoint} that the hyperplane arrangement is generic is necessary.  Here is an example of how the conclusion can fail without this assumption.  Consider the nongeneric layer map 
$F:\mathbb{R}^2 \to \mathbb{R}^2$ given by $$(x,y) \mapsto (\textrm{Relu}(x+1),\textrm{Relu}(x)).$$  Then $\{(x,y) : x = 0\}$ is a flat $1$-cell in $\mathcal{C}(F)$ whose image under $F$ is the point $(1,0)$, not $(0,0)$.  
 \end{remark}

\begin{remark}
The conclusion of Lemma \ref{l:onelayersendstopoint} does not always hold when $F$ is a composition of multiple generic, transversal layer maps. 

For example, define a neural network map  $F$ (omitting the final affine-linear layer) by \[F: \mathbb{R}^{2}  \xrightarrow{F_1= \sigma \circ A_1} \mathbb{R}^{2}  \xrightarrow{ F_2= \sigma \circ A_2}  \mathbb{R}^1\] where the affine-linear maps $A_i$ are defined by $A_1(x,y) = (x,y)$ and $A_2(u,v) = u+1$.  Then $\mathcal{C}(F) = \mathcal{C}^{std}(\mathbb{R}^2)$.  Let $C$ be the $1$-cell $\{(0,y) \in \mathbb{R}^2 \mid y \geq 0\}$ in $\mathcal{C}(F)$.  Then $C$ is flat since $F(C) = \{1\}$, but $C$ is not a face of a cell whose ternary labeling is $(-1, \ldots, -1)$ for either layer\color{black}. (Note that $\theta^{(2)} = 1$ for all points in the domain.)

Examples like this can arise when there exists some $i$ such that the image of a positive-dimensional cell, $C$, of $\mathcal{C}(F)$ under the first $i-1$ layer maps, $F^{(i-1)} \circ \ldots \circ F^{(1)}$, 
\begin{enumerate}
	\item is in the orthogonal complement of the span of the weight vectors of $F^{(i)}$, and
	\item at least one neuron of $F^{(i)}$ is switched on in the interior of $C$.
\end{enumerate} 
\end{remark}
\color{black}

The following was essentially proved in \cite{Grunert}:

\begin{lemma} \label{l:IntNonFlatRegular} Let $F: \mathbb{R}^n \rightarrow \mathbb{R}$ be a ReLU neural network function and $\mathcal{C}(F)$ its canonical polyhedral complex. Any point $p$ in the interior of a non-flat cell $C$ of dimension $k \geq 1$ is PL regular. 
\end{lemma}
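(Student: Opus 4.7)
The plan is to produce an explicit PL chart around $p$ in which $F$ takes the standard regular form $F(p) + x_n$. Translate so that $p = 0$, set $V := \mathrm{aff}(C)$ (a $k$-dimensional linear subspace), and let $W := V^\perp$ be its orthogonal complement, so $\mathbb{R}^n = V \oplus W$.

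First I would establish a local product structure: for $\epsilon > 0$ small, the restriction of $\mathcal{C}(F)$ to $N_p := (V \cap B_\epsilon) \times (W \cap B_\epsilon)$ has the form $(C \cap N_p) \times \mathcal{L}$, where $\mathcal{L}$ is a polyhedral decomposition of a neighborhood of $0 \in W$. The reason: every cell $D$ with $p \in \bar{D}$ contains $C$ as a face, and among the finitely many closed half-spaces cutting out $D$, only those bounded by hyperplanes containing $C$ remain ``active'' at $p$. Each such hyperplane contains all of $V$, so its defining equation depends only on the $W$-component; shrinking $\epsilon$ to exclude the inactive hyperplanes yields the product description. On this product, any cell has the form $D \cap N_p = (V \cap B_\epsilon) \times L_D$ with $L_D \in \mathcal{L}$, and since $F$ is affine-linear on $D$ and agrees with $F|_C$ when restricted to $C$, one obtains $F(v+w) = F|_C(v) + g_D(w)$ for a linear functional $g_D$ on $L_D$ with $g_D(0)=0$. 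Continuity of $F$ across shared faces glues the $g_D$'s into a single PL function $g$ on a neighborhood of $0 \in W$, so $F(v+w) = F|_C(v) + g(w)$ throughout $N_p$.

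Because $C$ is non-flat and $k \geq 1$, $F|_C$ is a non-constant affine-linear function of $v$, so I may choose a basis of $V$ in which $F|_C(x_1,\ldots,x_k) = F(p) + x_1$ and any basis of $W$ giving coordinates $(y_1,\ldots,y_{n-k})$; then $F(x,y) = F(p) + x_1 + g(y)$ on $N_p$. The PL change of variables
\[
\psi(x_1,\ldots,x_k,y_1,\ldots,y_{n-k}) := (x_2,\ldots,x_k,\, y_1,\ldots,y_{n-k},\, x_1 + g(y)),
\]
whose PL inverse recovers $x_1 = u_n - g(u_k,\ldots,u_{n-1})$, is a PL homeomorphism onto its image, and in the new coordinates $F \circ \psi^{-1}(u) = F(p) + u_n$. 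This exhibits $p$ as PL regular.

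I expect the main obstacle to be the local product decomposition of the first step; the subsequent ``linearization'' — absorbing the PL normal variation $g(y)$ into a single tangential coordinate — is a short and elementary change of variables. The product decomposition is implicit in Grunert's treatment of polytopal complexes, and extends with no new ideas to the polyhedral setting since the argument is entirely local at $p$ and only finitely many cells meet a small neighborhood of $p$.
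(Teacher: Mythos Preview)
Your argument is correct, but it takes a different route from the paper's own proof. The paper's proof is essentially a two-line citation: it observes that a simplicial subdivision of a star neighborhood of $p$ is a combinatorial $n$--manifold with boundary (because it is embedded in $\mathbb{R}^n$), and then invokes \cite[Lem.~3.22]{Grunert} to conclude directly that $p$ is PL regular whenever $C$ is non-flat. No explicit chart is constructed.

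By contrast, you build the chart by hand: you first establish a local product structure $V \times W$ of the complex near $p$ (using that every cell whose closure meets $p$ has $C$ as a face, so all active supporting hyperplanes contain $\mathrm{aff}(C)$), then split $F$ accordingly as $F|_C(v) + g(w)$, and finally absorb the PL normal part $g$ into a single tangential coordinate via an explicit PL shear. This is more elementary and fully self-contained---a reader need not consult Grunert's thesis---and it makes transparent exactly where the non-flatness hypothesis enters (it supplies the nonzero linear coordinate $x_1$ on $V$ needed for the shear). The cost is length: what the paper does in two sentences takes you a page. Both approaches are valid; yours is in fact close in spirit to how results like Grunert's Lemma 3.22 are typically proved, so you have essentially re-derived the cited lemma in the special case at hand.
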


\begin{proof} Recall that $F$ is affine-linear on the cells of $\mathcal{C}(F)$ by construction. Let $p$ be a point in the interior of a cell $C$ of dimension $k \geq 1$. Any finite simplicial subdivision of the intersection complex, $\mathcal{C}_p$, of a star neighborhood of $p$ with  $\mathcal{C}(F)$ will be a combinatorial $n$--manifold with boundary, since it is imbedded in $\mathbb{R}^n$ (cf. the discussion following \cite[Defn. 1.33]{Grunert}).
If $C$ is non-flat, it then follows from \cite[Lem. 3.22]{Grunert} that $p$ is PL regular.
\end{proof}
\color{black}
We are now ready to prove Theorem \ref{thm:NotPLMorse}.

\begin{proof}[Proof of Theorem \ref{thm:NotPLMorse}] 
Let $F: \mathbb{R}^n \rightarrow \mathbb{R}$ be a ReLU neural network map with hidden layers of dimensions $(n_1, \ldots, n_\ell)$.

We focus first on proving that when $\ell = 1$ the probability that $F$ is PL Morse is \[\frac{\sum_{k=n+1}^{n_1} {n_1 \choose k}}{2^{n_1}}.\] 
Note that since the output layer map $G: \mathbb{R}^{n_1} \rightarrow \mathbb{R}$  is affine-linear, in the $\ell = 1$ case we have $\mathcal{C}(F)=\mathcal{C}(F^{(1)})$ where $F^{(1)}: \mathbb{R}^n \rightarrow \mathbb{R}^{n_1}$ is the first layer map. In other words, $\mathcal{C}(F) = \mathcal{C}(\mathcal{A})$, where $\mathcal{A}$ is the hyperplane arrangement associated to $F^{(1)}$.

We now claim that the probability that $F$ is PL Morse when $\ell = 1$ is precisely the probability that no $n$--cell of $\mathcal{C}(F)$ has ternary labeling $(-1, \ldots, -1)$. 

To prove this claim, we first note that since there are finitely many cells of $\mathcal{C}(F)$, and the final layer map $G$ is affine-linear, the probability that $\mathcal{C}(F)$ has a flat cell of positive dimension is precisely the probability that $\mathcal{C}(F^{(1)})$ does. Moreover, Lemma \ref{l:onelayersendstopoint} tells us that $\mathcal{C}(F^{(1)})$ has a flat cell of positive dimension if and only if it has a flat cell of dimension $n$ with ternary labeling $(-1, \ldots, -1)$.

If there is an $n$--cell of $\mathcal{C}(F)$ with ternary labeling $(-1, \ldots, -1)$, Lemma \ref{lem:Flatncell} tells us that $F$ is {\em not} PL Morse. If there is no $n$--cell with ternary labeling $(-1, \ldots, -1)$, then the only flat cells of $F$ are $0$--cells. Lemma \ref{l:IntNonFlatRegular} above tells us that any point in the interior of a non-flat cell is PL regular, and Proposition \ref{p:stdmodel} tells us that every $0$--cell of $\mathcal{C}(F)$ is PL regular or PL nondegenerate-critical, since there are no flat cells of positive dimension. It follows that $F$ is PL Morse, completing the proof of the claim.

To compute the probability that an $n$--cell of $\mathcal{C}(F) = \mathcal{C}(\mathcal{A})$ has ternary labeling $(-1, \ldots, -1)$, recall that Zaslavsky's theorem (cf. \cite[Prop. 2.4]{Stanley}) tells us that the total number of $n$--cells of $\mathcal{C}(\mathcal{A})$ when $\mathcal{A}$ is a generic arrangement of $n_1$ hyperplanes in $\mathbb{R}^n$ is $r(\mathcal{A}) = \sum_{k=0}^n {n_1 \choose k}$. Associated to each region is precisely one co-orientation of the hyperplane arrangement causing the region to be labeled with $\vec{0}$. So, of the $2^{n_1}$ possible co-orientations on each generic arrangement, precisely $r(\mathcal{A}) = \sum_{k=0}^n {n_1 \choose k}$ yield an $n$--cell with ternary labeling $(-1, \ldots, -1)$. But $\mathcal{A}$ is generic for all but a measure $0$ set of parameters. It follows that the probability that $F$ is PL Morse is \[1 - \frac{\sum_{k=0}^n {n_1 \choose k}}{2^{n_1}} = \frac{\sum_{k=n+1}^{n_1} {n_1 \choose k}}{2^{n_1}},\] as desired.

We will now prove that the above expression gives an {\em upper bound} on the probability that $F: \mathbb{R}^n \rightarrow \mathbb{R}$ is PL Morse in the case $\ell > 1$. By the discussion above and before the statement of Lemma \ref{l:onelayersendstopoint}, the probability that $F$ has a flat $n$--cell is bounded below by the probability that $\mathcal{C}(F^{(1)})$ has a region with ternary label $(-1, \ldots, -1)$. The latter probability is $\frac{\sum_{k=0}^n {n_1 \choose k}}{2^{n_1}}$, as above, so \[\frac{\sum_{k=n+1}^{n_1} {n_1 \choose k}}{2^{n_1}}\] is an {\em upper bound} on the probability that $F$ is PL Morse.
\end{proof}

\begin{corollary} \label{cor:goeszero} Let $F: \mathbb{R}^n \rightarrow \mathbb{R}$ be a ReLU neural network function with hidden layers of dimensions $(n_1, \ldots, n_\ell)$. 
\begin{enumerate}
	\item If $n_{1} \leq n$, the probability that $F$ is PL Morse is $0$.
	\item If $\ell = 1$ and $n_1  \geq 2n+1$, the probability that $F$  is PL Morse exceeds $\frac{1}{2}$.
	\item If $\ell = 1$, the probability that $F$ is PL Morse approaches $1$ as $n_1 \rightarrow \infty$.
\end{enumerate}
\end{corollary}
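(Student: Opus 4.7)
The plan is to apply Theorem \ref{thm:NotPLMorse} directly and analyze the common expression
\[
p(n_1,n) := \frac{1}{2^{n_1}}\sum_{k=n+1}^{n_1}\binom{n_1}{k}
\]
in each of the three regimes, using only the binomial identity $\sum_{k=0}^{n_1}\binom{n_1}{k}=2^{n_1}$ and the symmetry $\binom{n_1}{k}=\binom{n_1}{n_1-k}$. Since Theorem \ref{thm:NotPLMorse} gives the probability of being PL Morse as $p(n_1,n)$ when $\ell=1$ and as an upper bound when $\ell>1$, parts (ii) and (iii) (which assume $\ell=1$) follow from a direct analysis of $p$, while (i) follows in all depths from the upper bound alone.

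For (i), when $n_1\le n$ the index set $\{n+1,\dots,n_1\}$ is empty, so $p(n_1,n)=0$; the probability, bounded above by $0$, must vanish. For (iii), I rewrite
\[
p(n_1,n)=1-\frac{1}{2^{n_1}}\sum_{k=0}^{n}\binom{n_1}{k}
\]
and bound each $\binom{n_1}{k}$ with $k\le n$ by $n_1^k/k!\le n_1^n$, so the subtracted numerator is at most $(n+1)n_1^n$; since $n$ is fixed, dividing by $2^{n_1}$ yields a sequence that tends to $0$, hence $p(n_1,n)\to 1$ as $n_1\to\infty$.

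For (ii), substituting $j=n_1-k$ and invoking the symmetry gives $\sum_{k=n+1}^{n_1}\binom{n_1}{k}=\sum_{j=0}^{n_1-n-1}\binom{n_1}{j}$. The hypothesis $n_1\ge 2n+1$ means $n_1-n-1\ge n$, so the index set $\{0,\dots,n_1-n-1\}$ contains $\{0,\dots,n\}$; hence
\[
2\sum_{k=n+1}^{n_1}\binom{n_1}{k} \;\ge\; \sum_{k=n+1}^{n_1}\binom{n_1}{k} + \sum_{k=0}^{n}\binom{n_1}{k} \;=\; 2^{n_1},
\]
yielding $p(n_1,n)\ge\tfrac12$, with strict inequality exactly when $n_1>2n+1$. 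The only genuine subtlety in the corollary is the boundary case $n_1=2n+1$, where $p=\tfrac12$ exactly; the word ``exceeds'' in the statement of (ii) should therefore be read either as $\ge\tfrac12$ or the hypothesis tightened to $n_1>2n+1$. No step here presents any real obstacle; the argument is purely a bookkeeping exercise with binomial coefficients once Theorem \ref{thm:NotPLMorse} is in hand.
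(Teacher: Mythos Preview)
Your proof is correct and follows essentially the same approach as the paper: both invoke Theorem~\ref{thm:NotPLMorse} and then analyze $p(n_1,n)$ using only the binomial identity $\sum_k\binom{n_1}{k}=2^{n_1}$, the symmetry $\binom{n_1}{k}=\binom{n_1}{n_1-k}$, and the polynomial-versus-exponential comparison for~(iii). Your observation about the boundary case $n_1=2n+1$ giving $p=\tfrac12$ exactly is correct and is a genuine wrinkle in the statement; the paper's own remark after Theorem~\ref{thm:NotPLMorse} (that the probability is $\le\tfrac12$ when $n_1\le 2n+1$) is consistent with your reading, so the discrepancy lies in the wording of the corollary rather than in either proof.
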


\begin{proof} Item (i) is immediate. To see (ii), recall that $\sum_{k=0}^{n_1} {n_1 \choose k} = 2^{n_1}$ and ${n_1 \choose k} = {n_1 \choose n_1 - k}$. To see (iii), note that for fixed $n$, $\sum_{k=0}^n {n_1 \choose k}$ is a degree $n$ polynomial in $n_1$, while $2^{n_1}$ is exponential in $n_1$, so \[\lim_{n_1 \rightarrow \infty} \frac{\sum_{k=0}^n {n_1 \choose k}}{2^{n_1}} \rightarrow 0.\]
\end{proof}

\begin{definition} Let $F: \mathbb{R}^n \rightarrow \mathbb{R}$ be linear on cells of a polyhedral decomposition $\mathcal{C}$ of $\mathbb{R}^n$. We say that $F$ is {\em almost PL Morse} if there exists some cell $C \in \mathcal{C}$ such that if $x$ is neither PL regular nor PL nondegenerate critical, then $x \in |C|$.
\end{definition}

The following is immediate from the proof of Theorem \ref{thm:NotPLMorse}.

\begin{corollary}  \label{t:almostMorseProbab}
Let $F: \mathbb{R}^n \rightarrow \mathbb{R}^{n_1} \rightarrow \mathbb{R}$ be a depth $2$ ReLU neural network function. $F$ is almost PL Morse with probability $1$.
\end{corollary}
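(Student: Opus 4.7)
The plan is to revisit the structural analysis already established in the proof of Theorem~\ref{thm:NotPLMorse} for the single-hidden-layer ($\ell = 1$) case, which applies here since ``depth~$2$'' means $m = 1$ and there is a unique hidden layer of dimension $n_1$. For such $F$, the canonical polyhedral complex equals $\mathcal{C}(\mathcal{A})$, where $\mathcal{A}$ is the hyperplane arrangement associated to the hidden layer map; with probability $1$, $\mathcal{A}$ is generic, so by Lemma~\ref{l:onelayersendstopoint} every positive-dimensional flat cell of $\mathcal{C}(F)$ is a face of an $n$-cell with ternary labeling $(-1,\ldots,-1)$.

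I would split into the two possibilities identified in the proof of Theorem~\ref{thm:NotPLMorse}. If no $n$-cell carries the label $(-1,\ldots,-1)$, then (as shown there) $F$ is genuinely PL Morse, hence vacuously almost PL Morse. Otherwise, let $C$ denote the unique $n$-cell with that label; the claim is then that $F$ is almost PL Morse with witness cell $C$.

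To verify this, consider a point $x \notin |C|$. If $x$ lies in the interior of a cell $D$ of dimension at least $1$, then $D$ must be non-flat --- any positive-dimensional flat cell is a face of $C$ and therefore contained in $|C|$, contradicting $x \notin |C|$ --- so Lemma~\ref{l:IntNonFlatRegular} gives that $x$ is PL regular. If instead $x$ is a $0$-cell, then its star neighborhood contains no positive-dimensional flat cells (by the same argument: any such cell would be a face of $C$, forcing $x \in |C|$). The desired conclusion --- that $x$ is PL regular or PL nondegenerate-critical --- then follows from Proposition~\ref{p:stdmodel}, applied exactly as in the proof of Theorem~\ref{thm:NotPLMorse} to handle $0$-cells whose neighborhood has no positive-dimensional flat cells.

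The only subtlety is in this last step: the proof of Theorem~\ref{thm:NotPLMorse} invokes Proposition~\ref{p:stdmodel} under a \emph{global} hypothesis (``no flat cells of positive dimension''), whereas here that hypothesis holds only on the complement of $|C|$. The remedy is to observe that PL regularity and PL nondegeneracy at a point $p$ are determined by the combinatorics of a star neighborhood of $p$, so the proposition applies in a purely local form at each vertex of $\mathcal{C}(F) \setminus |C|$. This is the main --- and essentially only --- point requiring care; once it is articulated, the two cases above combine to show that $F$ is almost PL Morse on a probability-$1$ set of parameters.
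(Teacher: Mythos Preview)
Your proposal is correct and matches the paper's approach, which simply declares the corollary ``immediate from the proof of Theorem~\ref{thm:NotPLMorse}.'' One small remark: the ``subtlety'' you flag in the last paragraph is not really there, since Proposition~\ref{p:stdmodel} is already stated as a per-vertex classification (a $0$-cell $p$ is PL degenerate-critical precisely when it has a flat $1$-dimensional coface), so no additional localization argument is required.
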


We close the section with a first estimate of the prevalence of flat cells, in the form of a lower bound.

\begin{lemma}
Fix $x$ in $\mathbb{R}^{n_0}$, and a neural network architecture $(n_0, n_1,... n_m)$. Select a network $F$ with this architecture randomly such that its parameters are i.i.d. following a distribution which is symmetric about the origin. Letting $C$ be the minimal-dimensional cell in $\mathcal{C}(F)$ containing $x$, then the probability that $C$ is a flat cell for $F$ is at least $2^{-n_m}$. 
\end{lemma}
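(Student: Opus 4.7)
The plan is to exploit a coordinatewise sign-flip symmetry on the parameters of the last ReLU layer. The argument has two parts: first, isolate a combinatorial condition on the ternary label of $C$ that forces it to be flat; then estimate the probability of that condition via a group action on the parameter space.

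\textbf{Step 1 (Sufficient condition for flatness).} Using the ternary labeling $\theta_C = (\theta^{(1)}, \ldots, \theta^{(m)})$ recalled just before Lemma~\ref{l:onelayersendstopoint}, I would observe the following: if every entry of $\theta^{(m)}_C$ lies in $\{-1,0\}$, then the pre-activations $z_j^{(m)}(y) = \pi_j(A_m(F_{m-1} \circ \cdots \circ F_1(y)))$ are all $\leq 0$ on $C$, so $F_m(y) = \sigma(z^{(m)}(y)) = 0$ for every $y \in C$. Consequently $F(y)$ depends only on the constant $F_m(y)$ and is constant on $C$, making $C$ flat. Because $x$ lies in the relative interior of $C$ by minimality, this condition is equivalent to $z_j^{(m)}(x) \leq 0$ for every $j \in \{1,\ldots,n_m\}$.

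\textbf{Step 2 (Symmetry on layer $m$).} For each $\epsilon = (\epsilon_1,\ldots,\epsilon_{n_m}) \in \{-1,+1\}^{n_m}$, let $T_\epsilon$ denote the parameter-space transformation that multiplies the $j$-th row (weights and bias) of $A_m$ by $\epsilon_j$, leaving all other parameters fixed. Because the parameters are i.i.d. with a distribution symmetric about the origin, $T_\epsilon$ preserves the joint distribution. Moreover, $z_j^{(m)}(x)$ depends only on the parameters of layers $1,\ldots,m-1$ (which $T_\epsilon$ does not touch) and linearly on the $j$-th row of $A_m$, so $T_\epsilon$ transforms $z_j^{(m)}(x)$ to $\epsilon_j z_j^{(m)}(x)$.

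\textbf{Step 3 (Probability estimate).} Set $E_\epsilon := \{\epsilon_j z_j^{(m)}(x) \leq 0 \text{ for all } j\}$. Every parameter configuration $\omega$ lies in at least one $E_\epsilon$: take $\epsilon_j = -\mathrm{sgn}(z_j^{(m)}(x;\omega))$, choosing either sign when $z_j^{(m)}(x;\omega)=0$. So $\bigcup_\epsilon E_\epsilon$ covers the sample space, and by the invariance in Step~2, $T_{\epsilon'}$ maps $E_\epsilon$ bijectively and measure-preservingly onto $E_{\epsilon \cdot \epsilon'}$, whence $P(E_\epsilon) = P(E_{\epsilon'})$ for all $\epsilon, \epsilon'$. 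A union bound then gives $2^{n_m} P(E_\epsilon) \geq \sum_{\epsilon'} P(E_{\epsilon'}) \geq 1$, i.e. $P(E_\epsilon) \geq 2^{-n_m}$ for each $\epsilon$. Taking $\epsilon = (+1,\ldots,+1)$ and combining with Step~1 yields $P(C \text{ is flat}) \geq 2^{-n_m}$.

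The main obstacle is conceptual rather than technical: recognizing in Step~1 that a purely combinatorial condition on the layer-$m$ ternary label (namely, the absence of any $+1$ entry) already forces $C$ to be flat, independently of any possibly non-generic behavior in earlier layers. Once this is in hand, the symmetry argument is formal and requires only coordinatewise symmetry of the parameter distribution — no absolute continuity or genericity assumption is needed, and potential atoms at $z_j^{(m)}(x)=0$ only help by causing the events $E_\epsilon$ to overlap.
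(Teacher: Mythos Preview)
Your proof is correct and rests on the same core idea as the paper's: exploit the sign-flip symmetry on the rows of $A_m$, which preserves the parameter distribution, to show that with probability at least $2^{-n_m}$ all last-layer pre-activations are nonpositive at $x$. The paper phrases this as conditioning on $F$ and then uniformly re-selecting a co-orientation of the hyperplane arrangement $\mathcal{A}_m$, arguing that $\mathcal{C}(F)$ (and hence the cell $C$) is invariant under this re-selection and that exactly one co-orientation makes $F_m$ vanish on the region $R$ containing the image of $x$. Your version is slightly cleaner: by working pointwise at $x$ and using the union bound over the $2^{n_m}$ events $E_\epsilon$, you avoid invoking the invariance of $\mathcal{C}(F)$ under sign flips and do not need the paper's standing assumptions that $C$ is top-dimensional and the network is transversal; you also correctly observe that ties $z_j^{(m)}(x)=0$ only improve the bound. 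The trade-off is minimal---the paper's formulation makes the connection to co-orientations of $\mathcal{A}_m$ more explicit, which fits the surrounding narrative---but both arguments are really the same symmetry computation.
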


\begin{proof}
	We may assume that $C$ is an $n_0$-cell, as the probability that $x$ is in the $(n_0-1)$-skeleton of $\mathcal{C}(F)$ is zero. We also assume that the network is transversal so that the bent hyperplane arrangement forms the $(n_0 - 1)$ skeleton of $\mathcal{C}(F)$. 
	
	Recall $\mathcal{A}_m$, the hyperplane arrangement associated with $\mathbb{F}_m$, and its associated co-oriented hyperplane arrangement. Now, $C$ is given by the intersection of two cells in $\mathbb{R}^{n_0}$: A unique cell $R' = (F_m^{-1} \circ ... \circ F_1)^{-1}(R)$, with $R$ a top-dimensional region of the polyhedral decomposition of $\mathbb{R}^{n_{m-1}}$ given by $\mathcal{A}_m$, together with a unique cell of $\mathcal{C}(F_{m-1}\circ ... \circ F_1)$.
	
	Because the parameters for $F$ are selected i.i.d. from a distribution which is symmetric about the origin, the following construction of the network $F'$ has the same distribution as $F$: Selecting a network from the same distribution as $F$, then randomly and independently multiplying each row of $A_m$ by -1 or 1 with equal probability, effectively selecting $F$ and then uniformly selecting a co-orientation of the hyperplane arrangement $\mathcal{A}_m$ to produce $F'$. We note that the polyhedral complex $\mathcal{C}(F)$ is equivariant under changing co-orientations of $\mathcal{A}_m$: all such co-orientations lead to the same final polyhedral complex $\mathcal{C}(F)$ and the presence of the same cell $C$. Recall that there is a unique co-orientation of $\mathcal{A}_m'$ making $F_m$ constant on $R$. There are $2^{n_m}$ such co-orientations, so the probability that $F_m'$ is constant on $R$, given initial network $F$, is $2^{-n_m}$. If $F_m'$ is constant on $R$, then $F'$ is constant on $C$, so the probability $F'$ is constant on $C$ given $F$ is at least $2^{-n_m}$. Since this is true for almost all $F$, the probability any $F'$ selected in this way is constant on the cell containing $x$ is at least $2^{-n_m}$. As $F'$ follows the same distribution as $F$, this is true for any network $F$ as well. 
\end{proof}
\color{black}

\section{Global and Local H-complexity} \label{sec:Hcompxty}

The purpose of this section is to define and study algebro-topological invariants of the level sets and sublevel sets of ReLU neural network functions. We extract local and global notions of topological complexity of ReLU neural network functions by studying how the homology of its level sets and sublevel sets change as we vary the final layer threshold. 

We recall the following definitions from algebraic topology (cf. \cite{Hatcher}) and piecewise linear (PL) topology (cf. \cite{RS}):

\begin{definition} Let $X,Y$ be topological spaces, and let $I := [0,1]$ denote the closed unit interval endowed with the standard topology. A {\em homotopy} between continuous maps $f,g: X \rightarrow Y$ is a continuous map $H: X \times [0,1] \rightarrow Y$ satisfying $H|_{X \times \{0\}} = f$ and $H|_{X \times \{1\}} = g$. 
\end{definition}

If there exists a homotopy between $f,g: X \to Y$ we say that $f$ and $g$ are {\em homotopic} and write $f \simeq g$.

\begin{definition} \label{defn:HE} Let $X,Y$ be topological spaces. $X$ and $Y$ are said to be {\em homotopy equivalent} if there exist continuous maps $f: X \rightarrow Y$ and $g: Y \rightarrow X$ such that $f\circ g \simeq \mbox{Id}$ and $g \circ f \simeq \mbox{Id}$. In this case, $f$ (resp., $g$) {\em induces a homotopy equivalence} between $X$ and $Y$. \color{black}
\end{definition}

 \emph{Strong deformation retraction} is a special case of homotopy equivalence that will be particularly important in the present work. A strong deformation retraction of a space $X$ onto a subspace $A \subset X$ is a continuous map $F:X \times [0,1] \to X$ such that $F(x,0)=x$, $F(x,1) \in A$ and $F(a,t)=a$ for all $x \in X, a \in A, t \in [0,1]$. 

Homotopy equivalence is an equivalence relation on topological spaces that is weaker than homeomorphism. Many algebro-topological invariants (in particular, the ones studied here) are homotopy equivalence class invariants. 

Since we will also define invariants of pairs $(X,A)$ of topological spaces where $A \subseteq X$ is a subspace (i.e., the inclusion map is continuous), we will need a few more definitions. 

\begin{definition} Let $(X,A)$ and $(Y,B)$ be pairs of topological spaces. A {\em map on pairs} $f: (X,A) \rightarrow (Y,B)$ is a continuous map $X \rightarrow Y$ whose restriction to $A$ is a continuous map $A \rightarrow B$.
\end{definition}

\begin{definition} Let $f, g: (X,A) \rightarrow (Y,B)$ be maps of pairs. $f$ and $g$ are said to be {\em homotopic through maps of pairs} $(X,A) \rightarrow (Y,B)$ if there exists a homotopy $H: X \times I \rightarrow Y$ for which $H_t: X \times \{t\} \rightarrow Y$ is a map on pairs $(X,A) \rightarrow (Y,B)$ for all $t \in [0,1]$.
\end{definition}

Recall that if $(X,A)$ is a pair of topological spaces, then $H_n(X,A)$ refers to the degree $n$ relative singular homology of the pair $(X,A)$ with $\mathbb{Z}$ coefficients, cf. \cite[Chp. 2]{Hatcher}. If $X$ is a simplicial complex and $A$ is a subcomplex, then the relative singular homology agrees with the relative simplicial homology of the pair $(X,A)$, cf. \cite[Thm. 2.27]{Hatcher}. 

Note that if $f: (X,A) \rightarrow (Y,B)$ is a map of pairs then $f$ induces a homomorphism $f_*: H_n(X,A) \rightarrow H_n(Y,B)$ for all $n$. We will make use of the following standard results, cf. \cite[Prop. 2.19, Ex. 2.27]{Hatcher}.

\begin{lemma} 
If two maps $f,g$ are homotopic through maps of pairs $(X,A) \rightarrow (Y,B)$, then $f_*=g_*: H_n(X,A) \rightarrow H_n(Y,B)$ for all $n$. 
\end{lemma}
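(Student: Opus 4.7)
The plan is to reduce this to the standard chain-level homotopy invariance for singular chains by adapting the prism operator to the relative setting. First I would recall the absolute version: given a homotopy $H:X \times I \to Y$ between continuous maps $f,g:X\to Y$, one constructs a chain homotopy $P_X:C_n(X)\to C_{n+1}(Y)$ from the standard decomposition of the prism $\Delta^n \times I$ into $(n+1)$--simplices; explicitly, for a singular simplex $\sigma:\Delta^n \to X$ one sets $P_X(\sigma)$ equal to the signed sum of the restrictions of $H\circ(\sigma \times \mathrm{Id})$ to each prism piece. A direct (if tedious) computation then yields the chain homotopy identity $\partial P_X + P_X \partial = g_\# - f_\#$ at the level of singular chain complexes.

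Next I would check the key compatibility needed in the relative setting: because $H$ is assumed to be a homotopy through maps of pairs $(X,A)\to (Y,B)$, the restriction $H|_{A\times I}$ takes values in $B$. Hence if $\sigma:\Delta^n \to X$ has image in $A$, then every simplex occurring in $P_X(\sigma)$ has image in $B$, so $P_X$ restricts to the prism operator $P_A:C_n(A)\to C_{n+1}(B)$ associated to the homotopy $H|_{A\times I}$. This is exactly what is needed for $P_X$ to descend to a well-defined map
\[
\bar{P}:C_n(X,A)=C_n(X)/C_n(A)\longrightarrow C_{n+1}(Y)/C_{n+1}(B)=C_{n+1}(Y,B).
\]
The chain maps $f_\#$ and $g_\#$ likewise descend to the quotient chain complexes because they restrict to chain maps $C_\bullet(A)\to C_\bullet(B)$.

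Finally, passing the identity $\partial P_X + P_X\partial = g_\# - f_\#$ to the quotient gives $\partial \bar{P} + \bar{P}\partial = \bar{g}_\# - \bar{f}_\#$, which is a chain homotopy between the relative chain maps. Taking homology then yields $f_* = g_*$ on $H_n(X,A)$ for every $n$, as desired. The main (and really only) nontrivial step is the compatibility verification in the second paragraph, that $P_X$ carries $C_n(A)$ into $C_{n+1}(B)$; once this is in place, everything is a routine transcription of the absolute case to the quotient complex.
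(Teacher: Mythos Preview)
Your argument is correct and is precisely the standard prism-operator proof. Note, however, that the paper does not supply its own proof of this lemma: it is stated as a standard result with a reference to \cite[Prop.~2.19, Ex.~2.27]{Hatcher}, and your write-up is exactly the argument Hatcher gives for the absolute case together with the observation (his Exercise~2.27) that the prism operator respects the subspace pair when the homotopy is through maps of pairs.
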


\begin{lemma} If $f: (X,A) \rightarrow (Y,B)$ is a map on pairs for which $f: X \rightarrow Y$ and its restriction $f|_A: A \rightarrow B$ induce homotopy equivalences, \color{black} then $f_*: H_n(X,A) \rightarrow H_n(Y,B)$ is an isomorphism for all $n$.
\end{lemma}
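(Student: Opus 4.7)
The plan is to deduce the lemma from the Five Lemma applied to the long exact sequences of the pairs $(X,A)$ and $(Y,B)$. First I would write down the long exact sequence of homology of a pair,
\[
\cdots \to H_n(A) \xrightarrow{i_*} H_n(X) \xrightarrow{j_*} H_n(X,A) \xrightarrow{\partial} H_{n-1}(A) \xrightarrow{i_*} H_{n-1}(X) \to \cdots
\]
together with the analogous sequence for $(Y,B)$. Because $f: (X,A) \to (Y,B)$ is a map of pairs, functoriality of singular homology on the absolute terms together with naturality of the connecting homomorphism $\partial$ with respect to maps of pairs assemble the two sequences into a commutative ladder whose vertical arrows are, in order, $(f|_A)_*: H_n(A) \to H_n(B)$, $f_*: H_n(X) \to H_n(Y)$, $f_*: H_n(X,A) \to H_n(Y,B)$, $(f|_A)_*: H_{n-1}(A) \to H_{n-1}(B)$, and so on.

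Next I would use the hypotheses to identify which rungs of this ladder are isomorphisms. By homotopy invariance of singular homology (Hatcher, Thm.~2.10), the fact that $f: X \to Y$ is a homotopy equivalence implies $f_*: H_n(X) \to H_n(Y)$ is an isomorphism for every $n$; likewise, the fact that $f|_A: A \to B$ is a homotopy equivalence implies $(f|_A)_*: H_n(A) \to H_n(B)$ is an isomorphism for every $n$. Hence, in every block of five consecutive vertical arrows of the ladder centered on a relative homology term, the four outer arrows are isomorphisms.

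Fixing $n$ and applying the Five Lemma to the commutative diagram whose five columns sit over $H_n(A), H_n(X), H_n(X,A), H_{n-1}(A), H_{n-1}(X)$ (rows exact, outer four vertical maps isomorphisms) then forces the middle vertical map $f_*: H_n(X,A) \to H_n(Y,B)$ to be an isomorphism as well, which is the desired conclusion. There is essentially no obstacle here: the lemma is quoted as a standard result from \cite{Hatcher} (Prop.~2.19 / Ex.~2.27), and the only care required is to verify naturality of $\partial$ with respect to maps of pairs and to apply the Five Lemma correctly, both of which are entirely routine.
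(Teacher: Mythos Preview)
Your proof is correct and is exactly the standard Five Lemma argument that the paper has in mind; the paper does not supply its own proof but simply cites this result as \cite[Prop.~2.19, Ex.~2.27]{Hatcher}, and the argument you give is precisely the one Hatcher intends.
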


The following definition is adapted from \cite{GrunertRote}. 

\begin{definition} \label{defn:HregHcrit} Let $F: |M| \rightarrow \mathbb{R}$ be a map that is linear on cells of a finite polyhedral complex $M$, let $a \in \mathbb{R}$ be a {\em nontransversal} threshold, and $K$ a connected component of the intersection of its corresponding level set, $F_{=a}$, with the subcomplex, $\MflatF$, of flat cells. $K$ is said to be {\em homologically critical} (or {\em H-critical} for short) if $H_*(F_{\leq a}\,\,, F_{\leq a} \setminus K) \neq 0$. $K$ is said to be {\em homologically regular} (resp., H-regular) otherwise.
\end{definition}

\begin{remark} In \cite{GrunertRote}, the authors define the notion of H-criticality and H-regularity only for flat $0$--cells and not for (connected unions of) flat higher-dimensional cells. Since the functions of interest to us have a non-zero probability of having higher-dimensional flat cells, we extend the definition. 
\end{remark}

\begin{definition} \label{defn:Hcpxty} Let $F: |M| \rightarrow \mathbb{R}$ be a map that is linear on cells of a finite polyhedral complex $M$, let $a \in \mathbb{R}$ be a {\em nontransversal} threshold, and let $K$ be a connected component of $F_{=a} \cap \MflatF$ as in Definition \ref{defn:HregHcrit} above. 
\begin{enumerate}
	\item The {\em $i$th local $H$--complexity} of $K$ is defined to be the rank of \[H_i(F_{\leq a}\,\, , F_{\leq a} \setminus K).\]
	\item The {\em total local $H$--complexity} of $K$ is defined to be the sum of the $i$th local $H$--complexities of $K$ over all $i$.
	\item The {\em global $H$--complexity} of $F$ is the sum, over all nontransversal thresholds $a \in \mathbb{R}$ and all connected components $K$, of \[F_{=a} \cap \MflatF,\] of the total local $H$--complexity of $K$.
\end{enumerate}
\end{definition}

\begin{remark} Note that the universal coefficient theorem tells us that if the total local $H$--complexity $H_*(F_{\leq a}\,\, , F_{\leq a} \setminus K) = 0$ then $H_*(F_{\leq a}\,\, , F_{\leq a} \setminus K; \mathbb{F}) = 0$ for any field $\mathbb{F}$, but this statement may not hold for the $i$th local $H$--complexity for fixed $i$.
\end{remark}

\color{black}

\begin{proposition} Let $M$ be a PL $n$--manifold and $F: M \rightarrow \mathbb{R}$ be a finite PL map. If $v$ is either a PL regular or PL nondegenerate-critical $0$--cell, then $v$ is  a connected component of $F_{=a} \cap \MflatF$. Moreover:
\begin{enumerate}
	\item If $v$ is PL regular, then the $i$th local $H$--complexity of $v$ is $0$ for all $i$.
	\item If $v$ is PL nondegenerate-critical of index $k$ with $F(v) = a$, then the $i$--th local H-complexity of $v$ is 
$\left\{\begin{array}{cl} 1 & \mbox{ if $i=k$ and}\\
0 & \mbox{ otherwise.}\end{array}\right.$
\end{enumerate}
\end{proposition}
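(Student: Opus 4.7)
The plan is to transport the question to the standard local models of Definition \ref{defn:PLMorse} via the PL chart, then compute the relative homology using excision together with explicit deformation retractions. To verify that $v$ is a connected component of $F_{=a} \cap \MflatF$, observe that in the PL chart $\psi$ around $v$, $F$ takes the form $F_{\text{reg}}(x) = a+x_n$ or $F_{\text{crit},k}(x) = a - \sum_{i\leq k}|x_i| + \sum_{i>k}|x_i|$; in either model, on each piece of the standard orthant decomposition the affine gradient of $F$ is nonzero, so after passing to a common refinement of cell structures no positive-dimensional cell of $\mathcal{C}$ through $v$ can be flat, isolating $\{v\}$ in $|\MflatF|$.

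Next, excising the complement of a small open chart neighborhood of $v$ reduces the computation to $H_\ast(N, N\setminus 0)$ in the standard model, where $N$ is the local sublevel set intersected with a small ball $B_\epsilon$. For case (i), $N = \{x_n \leq 0\}\cap B_\epsilon$ is convex and hence contractible; $N\setminus 0$ deformation retracts by straight lines onto the closed hemisphere $\{x_n \leq 0\}\cap \partial B_\epsilon$, which is also contractible. The long exact sequence of the pair then forces $H_i(N, N\setminus 0)=0$ for all $i$, proving (i).

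For case (ii),
\[
N \;=\; \Bigl\{\,x\in B_\epsilon \;:\; |x_{k+1}|+\cdots+|x_n|\leq |x_1|+\cdots+|x_k|\,\Bigr\}
\]
is star-shaped about $0$, hence contractible, and $N\setminus 0$ deformation retracts outward along rays onto the link $L := N\cap \partial B_\epsilon$. Identifying $(N, N\setminus 0)$ up to homotopy equivalence with the cone pair $(CL, L)$, the associated long exact sequence yields $H_i(N, N\setminus 0)\cong \widetilde H_{i-1}(L)$.

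The main obstacle is therefore to compute the homotopy type of $L$. Writing $x=(u,w)$ with $u\in\mathbb{R}^k$ and $w\in\mathbb{R}^{n-k}$, $L = \{(u,w)\in \partial B_\epsilon : \|w\|_1 \leq \|u\|_1\}$. I would exhibit an explicit deformation retraction that shrinks $w$ to zero while rescaling $u$ to remain on the $\epsilon$-sphere; the defining inequality persists along the retraction because shrinking $w$ decreases $\|w\|_1$ while rescaling increases $\|u\|_1$. The image is $\{(u,0): \|u\|=\epsilon\}\cong S^{k-1}$. The edge cases $k=0$ (where $N=\{0\}$ and $H_0(N, \emptyset) = \mathbb{Z}$ directly) and $k=n$ (where $N=B_\epsilon$ so that $(N, N\setminus 0) \simeq (D^n, S^{n-1})$) are checked separately, yielding $\widetilde H_{i-1}(L)\cong \mathbb{Z}$ precisely when $i=k$ and $0$ otherwise.
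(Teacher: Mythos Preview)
Your relative homology computation is correct and takes a genuinely different route from the paper's. The paper invokes Grunert's handle-attachment theorem \cite[Thm.~5.2]{Grunert} applied to the star complex of $v$, then reads off $H_\ast(F_{\leq a},F_{\leq a}\setminus\{v\})$ via excision; you instead work directly in the standard local models, reducing by excision to an explicit cone pair $(CL,L)$ and identifying $L\simeq S^{k-1}$ by a concrete retraction shrinking the $w$-coordinates. Your approach is more elementary and self-contained, exhibiting the homotopy equivalences by hand, whereas the paper's is shorter but imports a nontrivial black-box result.

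There is, however, a gap in your isolation argument for the \emph{regular} case. You assert that on each piece of the standard orthant decomposition the affine gradient of $F$ is nonzero. This holds for $F_{\mathrm{crit},k}$ (its restriction to any positive-dimensional orthant face retains at least one $\pm 1$ gradient entry), but it fails for $F_{\mathrm{reg}}=a+x_n$, which is identically $a$ on the entire hyperplane $\{x_n=0\}$. Thus if the given decomposition of $M$ happens to contain a positive-dimensional cell $C$ through $v$ with $\psi(C)\subset\{x_n=0\}$, then $C$ is flat and $\{v\}$ is not isolated in $\MflatF$. A concrete instance: take $M=\mathbb{R}^2$ with the coordinate-axis decomposition and $F(x,y)=x$; the origin is PL regular, yet both $y$-axis $1$-cells are flat. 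This is in fact an imprecision in the proposition as stated, and the paper's own proof glosses over the same point; nonetheless it is a genuine gap in your argument as written. Your computation of $H_\ast(F_{\leq a},F_{\leq a}\setminus\{v\})$ remains valid and yields $0$ in the regular case; it is only the assertion that $\{v\}$ forms a full connected component of $F_{=a}\cap\MflatF$ that can fail. In the nondegenerate-critical case your isolation argument does go through, so conclusion (ii) is fully justified.
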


\begin{proof} It follows immediately from the definitions of regular and non-degenerate critical point that $v$ is {\em isolated}, so $v$ is a connected component of $F_{=a} \cap \MflatF$. Consider the intersection complex of $M$ with any closed star neighborhood of $v$ containing no other flat cells. In \cite[Thm. 5.2]{Grunert}, Grunert proves that if 
\begin{itemize}
	\item $M$ is a polytopal complex with $F: M \rightarrow \mathbb{R}$ linear on cells, 
	\item $v$ is a non-degenerate critical point of index $k$ with $F(v) = a$, and 
	\item $v$ is the only $0$--cell in $F^{-1}[a-\epsilon,a+\epsilon]$ for some $\epsilon > 0$,
\end{itemize}
then $F_{a+\epsilon}$ is homotopy-equivalent to $F_{a-\epsilon}$ with a $k$--handle attached. By applying the above result to the restriction of $F$ to the star complex of $v$ in the intersection complex constructed above, it follows by excision (cf. the discussion following \cite[Defn. 3.3]{GrunertRote}) that \[H_i(F_{\leq a} \,\, ,F_{\leq a} \setminus \{v\}) = \left\{\begin{array}{cl} \mathbb{Z} & \mbox{ if $i=k$}\\
0 & \mbox{ otherwise. }\end{array}\right.\]
\end{proof}
\color{black}

\section{Sublevel sets of finite PL maps are homotopy equivalent within transversal intervals}

The goal of this section is to develop the necessary technical machinery to prove the following two keystone results of this paper:

\begin{retractiontheorem} 
Let $\mathcal{C}$ be a polyhedral complex in $\mathbb{R}^n$, let $F:|\mathcal{C}| \to \mathbb{R}$ be linear on cells of $\mathcal{C}$, and  let $[a,b]$ be an interval of transversal thresholds for $F$ on $\mathcal{C}$.  Then there exists a polytopal complex $\mathcal{D}$ with $|\mathcal{D}| \subset |\mathcal{C}|$ and a strong deformation retraction $\Phi:|\mathcal{C}| \to |\mathcal{D}|$ (which induces a cellular map $\mathcal{C} \to \mathcal{D}$) such that
$\Phi$ is $F$-level preserving on $|\mathcal{C}_{F \in [a,b]}|$ and for every $c \in [a,b]$,
\begin{equation} \label{eq:PhiProperties}
\Phi( |\mathcal{C}_{F \leq c}| ) = |\mathcal{D}_{F \leq c}|, \quad \Phi( |\mathcal{C}_{F \geq c}| ) = |\mathcal{D}_{F \geq c}|.
\end{equation}
Furthermore, there is an $F$-level preserving PL isotopy  
$$\phi: |\mathcal{D}|_{F = c} \times [a,b] \to |\mathcal{D}_{F \in [a,b]}|.$$
\end{retractiontheorem}

\begin{homotopyequivtheorem}
Let $\mathcal{C}$ be a polyhedral complex in $\mathbb{R}^n$, let $F:|\mathcal{C}| \to \mathbb{R}$ be linear on cells of $\mathcal{C}$, and let $[a,b]$ be an interval of transversal thresholds for $F$ on $\mathcal{C}$.  Then for any threshold $c \in [a,b]$, $|\mathcal{C}_{F \leq c}|$ is homotopy equivalent to $|\mathcal{C}_{F \leq a}|$; also $|\mathcal{C}_{F \geq c}|$ is homotopy equivalent to $|\mathcal{C}_{F \geq b}|$. \end{homotopyequivtheorem}

While we believe the above results are useful more generally, our main motivation for proving them here is to extend existing results in the literature adapting Morse-theoretic ideas to the PL setting so that they apply to some noncompact settings. Indeed, Theorem \ref{t:retraction} allows us to extend \cite[Lem. 4.13]{Grunert}, which applies only to polytopal complexes, to all polyhedral complexes imbedded in $\mathbb{R}^n$ equipped with the standard PL structure. 

Note also that Theorem \ref{t:homotopyequiv} is sufficient to guarantee that for a map that is affine-linear on cells of a finite polyhedral complex in $\mathbb{R}^n$, we need only perform computations at finitely many thresholds $a \in \mathbb{R}$ in order to compute the full set of algebro-topological invariants described in Section \ref{sec:Hcompxty}. In addition, we note that having compact models for the spaces of interest guarantees the existence of finite time algorithms to compute the algebro-topological invariants we study, since polytopal complexes admit finite simplicial subdivisions, and simplicial homology can be computed algorithmically.

This section is organized as follows. In Subsection \ref{s:canonpolytopalcpx}, we establish terminology and notation. In particular, we recall the classical decomposition of polyhedral sets as a Minkowski sum (Theorem \ref{t:polyhedraldecomposition}). In Subsection \ref{s:polytopalcpxpointed} we use this decomposition in the {\em pointed} case (see Definition \ref{d:SchrijverDefs})  to construct a deformation retraction to a canonical polytopal complex. In Subsection \ref{s:essentialization} we prove that for connected polyhedral complexes imbedded in $\mathbb{R}^n$, all cells are pointed or all cells are unpointed. This allows us to extend the deformation retraction to the unpointed setting. In Subsection \ref{s:homequivsublevel}, we then show that, with respect to any map that is affine-linear on cells, these deformation retractions are well-behaved enough to guarantee that sublevel sets are homotopy equivalent within transversal intervals.

\color{black}
\subsection{The canonical polytopal complex of a pointed polyhedral complex} \label{s:canonpolytopalcpx}

We begin by recalling some definitions from \cite{Schrijver} pertaining to the structure of polyhedral sets:

\begin{definition}[\cite{Schrijver}] \label{d:SchrijverDefs} \ 
\begin{enumerate} 
\item A \emph{cone} in $\mathbb{R}^n$ is a set $C$ such that if $x,y \in C$ and $\lambda, \mu \geq 0$, then $\lambda x + \mu y \in C$.  
\item Let $P$ and $Q$ be polyhedral sets embedded in $\mathbb{R}^n$.  The \emph{Minkowski sum} of $P$ and $Q$ is 
$$P + Q := \{p+q \mid p \in P, q \in P\}.$$
\end{enumerate}
 Let $P \subset \mathbb{R}^n$ be a polyhedral set.  
\begin{enumerate}
\setcounter{enumi}{2}
\item The \emph{characteristic cone} (or \emph{recession cone}) of $P$ is the set 
$$\textrm{char.cone}(P) := \{ y \mid x+y \in P \textrm{ for all }x \in P\}$$
\item The \emph{lineality space} of $P$ is the set 
$$\textrm{lin.space}(P) := \textrm{char.cone}(P) \cap - \textrm{char.cone}(P).$$
\item $P$ is said to be \emph{pointed} if $\textrm{lin.space}(P) = \{0\}$. 
\item A \emph{minimal face} of $P$ is a face not containing any other face.  
\item A \emph{minimal proper} face of a cone $C$ is a face of dimension 
$$\textrm{dim}(\textrm{lin.space}(C)) + 1.$$
\end{enumerate}
\end{definition}

Recall also that the \emph{convex hull} (resp. \emph{linear hull} and \emph{affine hull}) of a set $S \subset \mathbb{R}^n$ is the intersection of all convex subsets (resp. linear and affine-linear subspaces) of $\mathbb{R}^n$ that contain $S$.

The following well-known characterization of pointed cells is immediate:

\begin{lemma}[\cite{Schrijver}] \label{l:ptdequalsvertex} Let $P \subset \mathbb{R}^n$ be a polyhedral set. $P$ is pointed if and only if it has a face of dimension $0$.
\end{lemma}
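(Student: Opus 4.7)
The plan is to prove the two implications separately: the ``vertex implies pointed'' direction will use the supporting hyperplane that cuts out the vertex, while the ``pointed implies vertex'' direction will proceed by induction on $\dim P$ together with the fact that a pointed polyhedron contains no line.

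For the forward direction, suppose $\{v\} = H \cap P$ with $H = \{x : \vec{w}\cdot x + b = 0\}$ and $P \subseteq H^+$. Given $y \in \textrm{lin.space}(P)$, translation invariance yields $v + ty \in P \subseteq H^+$ for every $t \in \mathbb{R}$; unpacking the half-space condition and using $\vec{w}\cdot v + b = 0$, this says $t(\vec{w}\cdot y) \geq 0$ for all $t \in \mathbb{R}$, forcing $\vec{w}\cdot y = 0$. Then the line $v + \mathbb{R} y$ lies in $H \cap P = \{v\}$, so $y = 0$ and $\textrm{lin.space}(P) = \{0\}$.

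For the reverse direction, I would induct on $\dim P$. The case $\dim P = 0$ is immediate since $P$ is then a single point, which is itself a $0$-dimensional face. For $\dim P \geq 1$, I would pick $x$ in the relative interior of $P$ and a nonzero $d$ parallel to the affine hull of $P$. Since $\textrm{lin.space}(P) = \{0\}$, the line $x + \mathbb{R} d$ is not contained in $P$, so either the forward or the backward ray from $x$ in direction $d$ exits $P$ at some finite parameter $t^{\star} > 0$; the exit point $p$ lies in a proper face $F \subsetneq P$, obtained as the intersection of $P$ with the defining hyperplanes that become tight at $p$. Because $\textrm{char.cone}(F) \subseteq \textrm{char.cone}(P)$, the face $F$ inherits pointedness, and $\dim F < \dim P$, so the inductive hypothesis produces a $0$-dimensional face of $F$, which is also a $0$-dimensional face of $P$ by transitivity of the face relation.

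The main obstacle is the clean execution of the inductive step, where three small facts deserve care: (i) a polyhedron is pointed if and only if it contains no line, which follows by unpacking $\textrm{lin.space}(P) = \textrm{char.cone}(P) \cap -\textrm{char.cone}(P)$ in terms of the defining half-spaces; (ii) the exit point lies on a genuine proper face rather than merely on the topological boundary, which uses the facial structure of polyhedra; and (iii) faces inherit pointedness, which follows from the fact that any face is the intersection of $P$ with an affine subspace and so its characteristic cone is contained in that of $P$. None of these is individually deep, but verifying them is where the content of the argument really lies.
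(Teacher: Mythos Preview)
Your argument is correct. The forward direction is clean, and the inductive reverse direction is the standard one; the three ``small facts'' you flag are all true and are verified along the lines you sketch (in particular, writing $P=\{x:Ax\le b\}$ gives $\textrm{char.cone}(P)=\{y:Ay\le 0\}$, from which both the line criterion and the inclusion $\textrm{char.cone}(F)\subseteq\textrm{char.cone}(P)$ for faces $F$ follow immediately). One tiny omission in the forward direction: you should also dispatch the degenerate case where the $0$-face is $P$ itself, but that is trivial.

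As for comparison with the paper: there is nothing to compare. The paper does not prove this lemma; it states it as ``well-known'' and ``immediate,'' attributes it to Schrijver, and moves on. Your write-up is thus strictly more than what the paper provides.
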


It will be important to us that a pointed polyhedral set has a canonical representation as the Minkowski sum of its compact part and its characteristic cone:

\begin{theorem} \cite[Theorem 8.5]{Schrijver} \label{t:polyhedraldecomposition}
Let $P \subset \mathbb{R}^n$ be a polyhedral set.  For each minimal face $F$ of $P$, fix a point $x_F \in F$. For each minimal proper face $F$ of $\textrm{char.cone}(P)$, fix a point $y_F \in F \setminus \textrm{lin.space}(P)$.  Fix a set $\{z_1,\ldots,z_k\}$ that generates the linear space $\textrm{lin.space}(P)$. Then 
\begin{multline*}
P = \textrm{conv.hull}(\{x_F \mid F \textrm{ a minimal face of }P\})  + \\ 
\textrm{cone} (\{y_F \mid F \textrm{ a minimal proper face of }\textrm{char.cone}(P) \}) + 
\\ \textrm{lin.hull}(\{z_1,\ldots,z_k\})
\end{multline*}
\end{theorem}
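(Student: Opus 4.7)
The plan is to reduce to the pointed case by splitting off the lineality space, invoke the Minkowski-Weyl decomposition for pointed polyhedra and pointed cones, and finally verify that the canonical data $\{x_F\}$, $\{y_F\}$, $\{z_i\}$ described in the statement recover this decomposition up to adjustments that are absorbed into $\textrm{lin.hull}(\{z_1,\ldots,z_k\})$.

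First I would set $L := \textrm{lin.space}(P)$ and $P' := P \cap L^\perp$, where $L^\perp$ is any complementary subspace. Since $\pm L \subseteq \textrm{char.cone}(P)$, a direct verification shows $P = P' + L$ and that $P'$ is pointed (its lineality space is $L \cap L^\perp = \{0\}$). The map $F \mapsto F \cap L^\perp$ is an isomorphism of face lattices between $P$ and $P'$. In particular, minimal faces of $P$ are precisely the $L$-translates of vertices of $P'$: a minimal face contains no proper face, hence equals its own affine hull, and the maximal-dimensional affine subspace contained in $P$ has dimension $\dim(L)$. The same argument for the cone gives $\textrm{char.cone}(P) = \textrm{char.cone}(P') + L$, and minimal proper faces of $\textrm{char.cone}(P)$ are $L$-translates of the extreme rays (the $1$-dimensional faces) of the pointed cone $\textrm{char.cone}(P')$.

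Next I would invoke the Minkowski-Weyl decomposition for the pointed pieces: $P'$ is the Minkowski sum of the convex hull of its finitely many vertices and its characteristic cone, and the pointed cone $\textrm{char.cone}(P')$ equals the conical hull of its extreme rays. This is the classical substantive content; it is typically established via Fourier-Motzkin elimination or by passing between $\mathcal{H}$- and $\mathcal{V}$-representations, and I would either cite Schrijver directly or sketch this step.

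Finally, I would reconcile the canonical decomposition with the choices in the statement. For each minimal face $F$ of $P$, the chosen point $x_F$ satisfies $x_F = v_F + \ell_F$, where $v_F$ is the unique vertex of $P'$ lying in $F \cap L^\perp$ and $\ell_F \in L = \textrm{lin.hull}(\{z_i\})$; the discrepancies $\ell_F$ are absorbed into the third summand. For each minimal proper face $F$ of $\textrm{char.cone}(P)$, the hypothesis $y_F \notin L$ guarantees that its $L^\perp$-component is a nonzero point on the extreme ray of $\textrm{char.cone}(P')$ corresponding to $F$; since a conical hull is insensitive to positive scaling and additive shifts by $L$, one obtains $\textrm{cone}(\{y_F\}) + L = \textrm{char.cone}(P)$. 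Summing the three pieces yields $P' + L = P$. The main obstacle is the Minkowski-Weyl step itself, which is the classical backbone; everything else (the lineality-space decomposition and the insensitivity to the choices of $x_F$ and $y_F$) is routine once the face-lattice correspondence is firmly in place.
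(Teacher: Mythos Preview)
The paper does not supply its own proof of this theorem: it is stated as \cite[Theorem 8.5]{Schrijver} and used as a black box (notably in Lemma~\ref{l:basewelldefined}), so there is nothing in the paper to compare your proposal against.

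That said, your outline is the standard route to this result and is correct as a sketch. Reducing modulo the lineality space to obtain a pointed polyhedron $P'$, applying the Minkowski--Weyl theorem to $P'$ and to the pointed cone $\textrm{char.cone}(P')$, and then checking that the particular choices of $x_F$, $y_F$ are absorbed by the $\textrm{lin.hull}(\{z_i\})$ summand is exactly how Schrijver proceeds. The one place where you defer to a citation (the $\mathcal{H}$--$\mathcal{V}$ equivalence for pointed polyhedra) is indeed the substantive step, but it is entirely appropriate to cite it here since the paper itself treats the whole theorem as background.
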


Equipped with the definition of a pointed polyhedral set, we can now define and establish notation for its compact part:

\begin{definition}[Compact part of a pointed polyhedral set]
For any polyhedral set $C$ embedded in $\mathbb{R}^n$, we denote by $K_C$ the convex hull of the set of $0$-cells of $C$, and denote by $\widetilde{K}_C$ the polytopal complex consisting of all the faces of $K_C$ (including the set $K_C$ itself). 
\end{definition}

\begin{definition}[Compact part of pointed complexes in $\mathbb{R}^n$] \label{def:compactpart}
Let $\mathcal{C}$ be a polyhedral complex embedded in $\mathbb{R}^n$ in which all cells are pointed.  
 Define the \emph{canonical polytopal complex associated to} $\mathcal{C}$ to be the following family of sets:
 \begin{equation}  \label{eq:union} \textrm{com.part}(\mathcal{C}) :=  \bigcup_{C \in \mathcal{C}} \widetilde{K}_C.\end{equation}

\end{definition}

\begin{remark}
We interpret the union in \eqref{eq:union} as not allowing redundancy, i.e. each set occurs only once in $\textrm{com.part}(\mathcal{C})$, even if it is represented in the union more than once.  
\end{remark}

For a polyhedral complex $\mathcal{C}$ whose cells are all pointed, we will also refer to $\textrm{com.part}(\mathcal{C})$ as the canonical polytopal complex associated to $\mathcal{C}$ (c.f. Definition \ref{d:canonicalpolytopalcpx}).  
 Calling $\textrm{com.part}(\mathcal{C})$ a ``polytopal complex'' is justified by the following result:

\begin{lemma} \label{l:polytopalcpxiscpx}
Let $\mathcal{C}$ be a polyhedral complex embedded in $\mathbb{R}^n$ in which all cells are pointed.  Then the compact part of $\mathcal{C}$, $\textrm{com.part}(\mathcal{C})$, is a polytypal complex.
\end{lemma}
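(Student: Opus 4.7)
The plan is to verify the two axioms for a polytopal complex given in Section~\ref{s:polybackground}: every face of an element of $\textrm{com.part}(\mathcal{C})$ lies in $\textrm{com.part}(\mathcal{C})$, and for any two elements their intersection is a mutual face. For each $C \in \mathcal{C}$ the vertex set is finite (a polyhedral set has only finitely many faces), so $K_C$ is a polytope and $\widetilde{K}_C$ is a polytopal complex in its own right. The first axiom then follows at once by transitivity of the face relation, since any face of a face of $K_C$ is a face of $K_C$, and hence already lies in $\widetilde{K}_C \subseteq \textrm{com.part}(\mathcal{C})$.

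The main work is the second axiom. Take $P \in \widetilde{K}_C$ and $Q \in \widetilde{K}_{C'}$. The case $C = C'$ is the standard fact about the face lattice of a single polytope, so assume $C \neq C'$, and set $D := C \cap C' \in \mathcal{C}$. The crux of the argument is the identity
\[
K_C \cap K_{C'} = K_D.
\]
The containment $K_D \subseteq K_C \cap K_{C'}$ is immediate from the fact that the vertex set $V_D$ of $D$ is contained in both $V_C$ and $V_{C'}$: any $0$-dimensional face of $D$ is, by transitivity, a $0$-dimensional face of $C$ and of $C'$. For the reverse inclusion, let $x \in K_C \cap K_{C'}$, write $x = \sum_i \lambda_i v_i$ as a convex combination with $v_i \in V_C$, and choose a co-oriented supporting hyperplane $H = \{y : w \cdot y + b = 0\}$ with $C \subseteq H^+$ and $D = H \cap C$. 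Then $x \in C \cap C' = D \subseteq H$, so $0 = w \cdot x + b = \sum_i \lambda_i (w \cdot v_i + b)$, a sum of nonnegative terms; hence every $v_i$ with $\lambda_i > 0$ lies in $H \cap C = D$, and is therefore a vertex of $D$. This gives $x \in K_D$.

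Granting this identity, the second axiom follows: since $P \subseteq K_C$ and $Q \subseteq K_{C'}$, we have $P \cap Q \subseteq K_D$, so $P \cap Q = (P \cap K_D) \cap (Q \cap K_D)$. A supporting hyperplane of $K_C$ cutting out $P$ either contains $K_D$ or supports it, so $P \cap K_D$ is a face of $K_D$; the same argument shows $Q \cap K_D$ is a face of $K_D$. Intersecting two faces inside the single polytope $K_D$ produces a face of $K_D$ that is simultaneously a face of $P \cap K_D$ and of $Q \cap K_D$, and hence, by transitivity, a common face of $P$ and $Q$ lying in $\widetilde{K}_D \subseteq \textrm{com.part}(\mathcal{C})$. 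The main obstacle in carrying this out is the identity $K_C \cap K_{C'} = K_D$; everything else is bookkeeping about face lattices of polytopes.
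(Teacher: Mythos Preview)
Your overall strategy is the same as the paper's: verify the two polytopal-complex axioms, with the heart of the matter being the identity $K_C \cap K_{C'} = K_D$ for $D = C \cap C'$. Your proof of this identity via a supporting hyperplane of $C$ cutting out $D$ is clean and in fact more transparent than the paper's (which argues somewhat loosely via vertex sets).

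There is, however, a genuine gap in your final transitivity step. You establish that $P \cap K_D$ is a face of $K_D$, and then that $P \cap Q$ is a face of $P \cap K_D$ (as the intersection of two faces of the polytope $K_D$). To conclude that $P \cap Q$ is a face of $P$ ``by transitivity'' you need $P \cap K_D$ to be a face of $P$, not merely of $K_D$; you never show this. The missing ingredient is that $K_D$ is itself a face of $K_C$: once you know that, both $P$ and $K_D$ are faces of $K_C$, so their intersection is a face of $K_C$ contained in $P$, hence a face of $P$, and the chain closes. The paper proves this explicitly. In fact your own supporting-hyperplane computation already contains it: the same argument that gives $K_C \cap K_{C'} = K_D$ shows more precisely that $K_C \cap H = K_D$, where $H$ is the hyperplane supporting $C$ along $D$; since $H$ then also supports $K_C \subseteq C$, this exhibits $K_D$ as a face of $K_C$. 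You should state this consequence before invoking transitivity. (Also handle the trivial edge cases $D = \emptyset$ and $D = C$, where no such $H$ exists.)
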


\begin{proof}

	A polyhedral complex, by definition, has only finitely many $0$-cells, so for each cell $C \in \mathcal{C}$, $K_C$ is a polytope.  Furthermore, a face of a polytope is a polytope.  Hence, $\textrm{com.part}(\mathcal{C})$ is a finite collection of polytopes.  We must verify that
	\begin{enumerate}
	\item \label{i:verify1} every face of each polytope in $\textrm{com.part}(\mathcal{C})$ is in  $\textrm{com.part}(\mathcal{C})$, and 
	\item \label{i:verify2} the intersection of any two polytopes in  $\textrm{com.part}(\mathcal{C})$ is a face of both.  
	\end{enumerate}
	
	Property \eqref{i:verify1} holds because  $\textrm{com.part}(\mathcal{C})$ consists of a union of polytopal complexes, and each such complex must satisfy \eqref{i:verify1}.  
	
	To see property \eqref{i:verify2}, suppose that $A$ and $B$ are any two polytopes in $\textrm{com.part}(\mathcal{C})$. We wish to show that $A \cap B$ is additionally a member of $\textrm{com.part}(\mathcal{C})$, and is a face of both $A$ and $B$. Let $C_1$ and $C_2$ be minimal cells in $\mathcal{C}$ such that $A \in \widetilde{K}_{C_1}$ and $B \in \widetilde{K}_{C_2}$. We claim that $A \cap B$ is a face of $K_{C_1 \cap C_2}$, and therefore a member of $\textrm{com.part}(\mathcal{C})$. Furthermore, $A \cap B$ is a face of both $A$ and $B$. 
	
	First, we may assume $A \cap B$ is nonempty, since otherwise \eqref{i:verify2} holds vacuously. Then, since any two cells of $\mathcal{C}$ are either disjoint or intersect in a common face, we know $C_1 \cap C_2$ is a polyhedral set in $\mathcal{C}$, and a face of both $C_1$ and of $C_2$.  As a result, the $0$-cells of $C_1 \cap C_2$ are precisely those $0$--cells contained in both $C_1$ and $C_2$. Two compact polytopes are equal as sets if and only if they have the same vertices, as taking convex hulls preserves subsets.
	This implies $K_{C_1 \cap C_2} = K_{C_1} \cap K_{C_2}$. 
	
	Furthermore, $K_{C_1 \cap C_2}$ is a face of $K_{C_1}$. We see this as follows. Since $C_1 \cap C_2$ is a face of $C_1$, there is a supporting hyperplane $H$ of $C_1$ such that $C_1 \cap C_2 = C_1 \cap H$. Consider $K_{C_1} \cap H$. First, $H$ is a supporting
	hyperplane of $K_{C_1}$ since $H$ does not cut $C_1$, so it cannot cut $K_{C_1}\subset  C_1$. Since $K_{C_1 \cap C_2}\subset (C_1 \cap H)$ is nonempty, $K_{C_1} \cap H$ is nonempty, and must be a face of $K_{C_1}$. We claim $K_{C_1}\cap H = K_{C_1 \cap C_2}$. Now, since $K_{C_1}\cap H$ is a face of $K_{C_1}$, the vertices of $K_{C_1}\cap H$ are precisely the vertices of $K_{C_1}$ contained in $H$, which are precisely the vertices of $C_1$ contained in $H$. But the vertices of $C_1$ contained in $H$ are the vertices of $C_1 \cap H$, since $C_1 \cap H$ is a face of $C_1$. Since $C_1 \cap H = C_1 \cap C_2$, the vertex set of of $C_1 \cap H$ is precisely the same as the vertex set of $C_1 \cap C_2$, which is the vertex set of $K_{C_1 \cap C_2}$. Thus $K_{C_1} \cap H = K_{C_1 \cap C_2}$ since their vertex sets are equal, and $K_{C_1 \cap C_2}$ is a face of $K_{C_1}$.  By symmetry, $K_{C_1 \cap C_2}$ is also a face of $K_{C_2}$.

	Now, to see that $A \cap B$ is a face of $K_{C_1 \cap C_2}$, we note that $A \cap K_{C_1 \cap C_2}$ must be a face of $K_{C_1 \cap C_2}$ since both $A$ and $K_{C_1 \cap C_2}$ are faces of $K_{C_1}$. Likewise $B \cap K_{C_1 \cap C_2}$ must be a face of $K_{C_1 \cap C_2}$. Since $\tilde{K}_{C_1 \cap C_2}$ is a polytopal complex, their intersection, $A \cap B \cap K_{C_1 \cap C_2}$ must be a face of $K_{C_1 \cap C_2}$ as well. But $A \cap B \subset K_{C_1}\cap K_{C_2}$, so $A\cap B \cap K_{C_1 \cap C_2} = A\cap B$, and $A \cap B$ must be a face of $K_{C_1 \cap C_2}.$

	This establishes that $A \cap B$ is a member of $\textrm{com.part}(\mathcal{C})$.

	Lastly, we show that $A \cap B$ is a face of $A$. However this is immediate. We note that $A$ is a face of $K_{C_1}$, $K_{C_1 \cap C_2}$ is a face of $K_{C_1}$, and $A \cap B$ is a face of $K_{C_1 \cap C_2}$, and thus a face of $K_{C_1}$. Since $\tilde{K}_{C_1}$ is a polytopal complex, two faces of $K_{C_1}$ intersect in a common face if the intersection is nonempty, telling us that $A \cap (A \cap B)$ is a face of $A$. By symmetry, $B \cap (A \cap B)$ is a face of $B$. 	 
\end{proof}

\begin{remark}
	We must extend the argument to faces of $K_{C}$ because if a cell $C$ is unbounded,$\tilde{K}_{C}$ may have a face which cannot be expressed as $K_{C'}$ for any cell $C'$ in $\mathcal{C}$. 
\end{remark}

\color{black}
\subsection{Deformation retraction of a pointed polyhedral complex of $\mathbb{R}^n$ to its canonical polytopal complex} \label{s:polytopalcpxpointed}

\color{black}

The purpose of this subsection is to prove:

\begin{proposition}\label{p:pointeddeformationretraction}
Let $\mathcal{C}$ be a polyhedral complex in $\mathbb{R}^n$ in which all cells are pointed. Then there is a strong deformation retraction from $|\mathcal{C}|$ to  $|\textrm{com.part}|(\mathcal{C})|$ that induces a surjective cellular map from $\mathcal{C}$ to $\textrm{com.part}(\mathcal{C})$ respecting the face poset partial order.
\end{proposition}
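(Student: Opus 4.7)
The plan is to construct the retraction cell-by-cell using the Minkowski sum decomposition from Theorem~\ref{t:polyhedraldecomposition}, and then verify that the cell-wise pieces glue. For each cell $C \in \mathcal{C}$, pointedness of $C$ forces $\textrm{char.cone}(C)$ to be pointed as well, so Theorem~\ref{t:polyhedraldecomposition} gives $C = K_C + \textrm{char.cone}(C)$. The idea is to define a constrained closest-point projection
$$\rho_C : C \longrightarrow K_C, \qquad \rho_C(x) := \operatorname*{argmin}_{k \in K_C,\ x-k \in \textrm{char.cone}(C)} \|x - k\|^2.$$
The feasible set is compact (it sits inside the polytope $K_C$), closed, convex, and nonempty (by the Minkowski decomposition), so strict convexity of the Euclidean norm produces a unique minimizer; standard parametric convex-optimization arguments (e.g., Berge's Maximum Theorem combined with uniqueness of the minimizer) give continuity of $\rho_C$ in $x$. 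Note that $\rho_C|_{K_C} = \textrm{id}_{K_C}$, since $k = x$ achieves the zero minimum whenever $x \in K_C$.

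The key compatibility step is to show that the $\rho_C$'s agree on common faces. Suppose $C' \prec C$ is cut out by a supporting hyperplane $H = \{z : \ell(z) = c\}$ with $C \subseteq \{\ell \geq c\}$. Every $r \in \textrm{char.cone}(C)$ must satisfy $\ell(r) \geq 0$ (since translating a point of $C \cap H$ by $r$ must remain in $\{\ell \geq c\}$), and every $k \in K_C$ satisfies $\ell(k) \geq c$. For $x \in C'$ one has $\ell(x) = c$, so the two feasibility constraints $k \in K_C$ and $x-k \in \textrm{char.cone}(C)$ force $0 \geq \ell(x) - \ell(k) = \ell(x-k) \geq 0$; hence $\ell(k) = c$ and $\ell(x-k) = 0$, giving $k \in K_C \cap H = K_{C'}$ and $x - k \in \textrm{char.cone}(C) \cap \vec{H} = \textrm{char.cone}(C')$. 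The feasible set defining $\rho_C(x)$ therefore coincides with the one defining $\rho_{C'}(x)$, so the unique minimizers agree. The $\rho_C$'s consequently patch into a well-defined continuous map $\rho : |\mathcal{C}| \to |\textrm{com.part}(\mathcal{C})|$.

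To verify that $\rho$ is a genuine retraction, take any $y \in |\textrm{com.part}(\mathcal{C})|$, pick a cell $C$ with $y \in K_C$, and let $C''$ be the minimal cell of $\mathcal{C}$ containing $y$. Since $C \cap C''$ is a common face and $y$ lies in the relative interior of $C''$, the cell $C''$ must itself be a face of $C$; the same linear-functional trick as above then shows $y \in K_{C''}$, so $\rho(y) = \rho_{C''}(y) = y$. Now define the homotopy
$$\Phi : |\mathcal{C}| \times [0,1] \to |\mathcal{C}|, \qquad \Phi(x, t) := (1-t)\, x + t\, \rho(x).$$
The segment from $x$ to $\rho(x)$ lies in the (convex) minimal cell of $\mathcal{C}$ containing $x$, so $\Phi$ is well-defined and continuous, with $\Phi(\cdot, 0) = \textrm{id}$, $\Phi(\cdot, 1) = \rho$ taking values in $|\textrm{com.part}(\mathcal{C})|$, and $\Phi(y, t) = y$ for all $y \in |\textrm{com.part}(\mathcal{C})|$. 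Since $\rho(C) = K_C \in \textrm{com.part}(\mathcal{C})$ for every $C \in \mathcal{C}$, and since $C' \prec C$ implies $K_{C'} \prec K_C$ by Lemma~\ref{l:polytopalcpxiscpx}, the induced cell-level assignment $C \mapsto K_C$ is cellular, respects the face poset, and the underlying map is surjective onto $|\textrm{com.part}(\mathcal{C})|$.

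The step requiring the most care is the continuity of $\rho_C$: as $x$ varies, the feasible set $K_C \cap (x - \textrm{char.cone}(C))$ is a continuously varying polyhedral family, and uniqueness of the closest-point minimizer combined with Berge-type continuity gives continuity of $\rho_C$, but this demands careful bookkeeping near the boundary of $C$, where the feasible set can drop in dimension. The gluing step, which at first glance might look like the crux, collapses to the one-line linear-functional computation above once the right supporting hyperplane is introduced.
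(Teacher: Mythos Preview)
Your proof is correct and is essentially the paper's own argument: your constrained closest-point map $\rho_C$ coincides with the paper's $\textrm{base}_C$ (both pick the unique $k\in K_C$ minimizing $\|x-k\|$ subject to $x-k\in\textrm{char.cone}(C)$), and your straight-line homotopy $\Phi$ is exactly Proposition~\ref{p:deformationretract}. The only cosmetic differences are that you invoke parametric convex optimization for continuity where the paper gives an explicit $\epsilon$--$\delta$ argument (Lemma~\ref{l:deltaPcontinuous}, Proposition~\ref{p:baseInEachCellContinuous}), and your supporting-hyperplane computation for face compatibility is a slightly slicker repackaging of the paper's Lemmas~\ref{l:retractionpreservesfacets}--\ref{l:facebyface}.
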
 

\color{black}

\begin{figure}
	\includegraphics[width=0.8\textwidth]{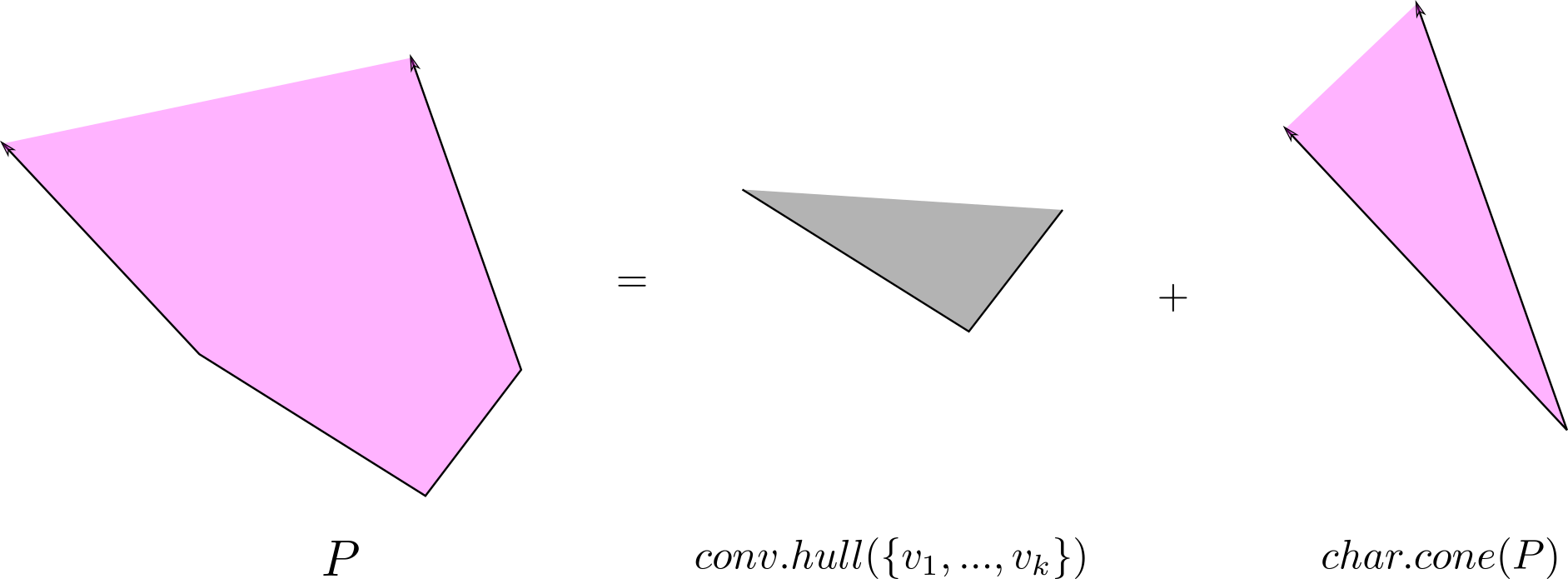}
	\caption{An unbounded, pointed polyhedral set expressed as the Minkowski sum of its characteristic cone and the convex hull of its vertices.}
\end{figure}

\subsubsection{The base map}

For a pointed polyhedral set $P$, we first define a function $\delta_P$ which measures the distance \emph{along directions in $\textrm{char.cone}(P)$} between a point in $P$ and $K_P$.

\begin{lemma} \label{l:basewelldefined}
Let $P \subset \mathbb{R}^n$ be a pointed polyhedral set.  
Define the function $\delta_P:P \to \mathbb{R}^{\geq 0}$ by 
\begin{equation*} \label{eq:D(p)}
\delta_P(p) := \inf \left \{ t \in \mathbb{R}^{\geq 0}\mid p = x+ ty, x \in K_P, y \in \textrm{char.cone}(P), |y| = 1 \right \}.
\end{equation*} 
Then for each point $p \in P$, the infimum in the definition of $\delta_P(p)$ is attained by a unique point in $K$ (i.e. there exists a unique choice of $x \in K$ such that  $p = x+\delta_P(p)y$ for some 
$y \in \textrm{char.cone}(P)$ with $|y| = 1$). 
\end{lemma}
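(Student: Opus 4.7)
The plan is to recast $\delta_P(p)$ as the Euclidean distance from $p$ to a nonempty compact convex subset of $K_P$, after which uniqueness of the minimizer reduces to the standard uniqueness of nearest-point projection onto a closed convex set.

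First I would apply Theorem \ref{t:polyhedraldecomposition} to $P$. Since $P$ is pointed, $\textrm{lin.space}(P) = \{0\}$, and Lemma \ref{l:ptdequalsvertex} tells us $P$ has at least one vertex, so the minimal faces are $0$-cells and the conv.hull summand is exactly $K_P$; the theorem then collapses to the clean Minkowski decomposition
\[
P = K_P + \textrm{char.cone}(P).
\]
In particular every $p \in P$ admits a representation $p = x + z$ with $x \in K_P$ and $z \in \textrm{char.cone}(P)$, so the set of admissible triples in the definition of $\delta_P(p)$ is nonempty. I would then introduce
\[
S_p := K_P \cap (p - \textrm{char.cone}(P)) = \{x \in K_P : p - x \in \textrm{char.cone}(P)\}
\]
and check that the map $(t,x,y) \mapsto x$ sets up a correspondence between admissible triples and points $x \in S_p$, with $t = \|p - x\|$ and $y = (p - x)/t$ forced whenever $p \neq x$. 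This yields
\[
\delta_P(p) = \min_{x \in S_p} \|p - x\|.
\]

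Since $K_P$ is compact (convex hull of finitely many vertices) and $p - \textrm{char.cone}(P)$ is closed and convex, $S_p$ is a nonempty compact convex set. Because $x \mapsto \|p - x\|^2$ is continuous and strictly convex, the parallelogram identity forces its minimum over any nonempty closed convex set to be attained at a unique point, which yields the unique $x$ realizing $\delta_P(p)$. The only obstacle I anticipate is bookkeeping: carefully translating between the triple formulation $p = x + ty$ with $|y|=1$ and the projection formulation $\min \|p - x\|$, and handling the degenerate case in which $\textrm{char.cone}(P) = \{0\}$ (where $P$ is a polytope equal to $K_P$ and the conclusion is trivial with $x = p$). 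The underlying convex-geometric facts are all standard.
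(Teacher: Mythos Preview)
Your proof is correct and is a cleaner repackaging of the paper's argument. The paper proves attainment by extracting a convergent subsequence from a minimizing sequence $(x_n,y_n,t_n)$, using compactness of $K_P$ and closedness of $\textrm{char.cone}(P)$; it then proves uniqueness by a direct midpoint argument: if $x_1 \neq x_2$ both realize $\delta_P(p)$ with directions $y_1,y_2$, then the midpoint $\alpha = (x_1+x_2)/2 \in K_P$ satisfies $|p-\alpha| < \delta_P(p)$ (median of an isosceles triangle) while $p - \alpha = \tfrac12(\delta_P(p)y_1+\delta_P(p)y_2) \in \textrm{char.cone}(P)$ by the cone property, contradicting minimality.

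Your reformulation $\delta_P(p) = \min_{x \in S_p} \|p-x\|$ with $S_p = K_P \cap (p - \textrm{char.cone}(P))$ recognizes this as nearest-point projection onto a nonempty compact convex set, so both attainment and uniqueness follow at once from the standard Hilbert projection theorem. The paper's isosceles-triangle step is precisely the specialization of the parallelogram-law uniqueness proof to this situation, so the two arguments share the same core; yours simply names the structure and invokes the general result rather than reproving it in place. The only cosmetic point to tidy is the edge case $p \in S_p$ (i.e.\ $p \in K_P$), where $t=0$ and the unit vector $y$ is irrelevant; as you note, this and the bounded case $\textrm{char.cone}(P)=\{0\}$ are degenerate and cause no trouble.
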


\begin{proof}
We first show that the infimum is attained for every $p \in P$.  By Theorem \ref{t:polyhedraldecomposition}, the set of points $x \in K_P$ such that $p = x+ty$ for some $t \geq 0$ and $y \in \textrm{char.cone}(P)$ is nonempty.
Hence there exist sequences  $\{x_n\}_{n \in \mathbb{N}} \subset K_P$, $\{y_n\}_{n \in \mathbb{N}} \subset \textrm{char.cone}(P)$ with $|y_n| = 1$ for all $n$,  and $\{t_n\}_{n \in \mathbb{N}} \subset \mathbb{R}^{\geq 0}$  with $\lim_{n \to \infty} t_n = \delta_P(p)$ 
such that $p = x_n+t_ny_n.$ 

   Since $K_P$ is compact, we may assume without loss of generality that the sequence $x_n$ converges to a point $x_{\infty} \in K_P$.  It follows that the sequence of vectors $t_ny_n = p-x_n$ also converges. Since $|y_n| = 1$ for all $n$, both the sequences $\{t_n\}$ and $\{y_n\}$ must converge; denote their limits by $t_{\infty}$ and $y_{\infty}$.  Notice $|y_{\infty}| = 1$ and $t_{\infty} = \delta_P(p)$.  Since $y_n \in \textrm{char.cone}(P)$ for all $n$ and $\textrm{char.cone}(P)$ is closed (for any polyhedral set $P$, $\textrm{char.cone}(P)$ is the intersection of finitely many closed half spaces), $y_{\infty}$ is also in $\textrm{char.cone}(P)$.  Thus $p = x_{\infty} + t_{\infty}y_{\infty}$, as desired.

Now, to show uniqueness, suppose there are two distinct points $x_1$ and $x_2$ in $K_P$ with corresponding unit-length vectors $y_1$ and $y_2$ in $\textrm{char.cone}(P)$ such that $$p = x_1+\delta_P(p)y_1 =  x_2+\delta_P(p)y_2.$$  Since $K_P$ is convex, the point 
$$\alpha \coloneqq \frac{x_1+x_2}{2}$$ is in $K$.  But then the line segment connecting $p$ and $\alpha$ is the median of the isosceles triangle with vertices $p$, $x_1$ and $x_2$, and hence  
$$|p-\alpha| < \delta_P(p).$$
Furthermore, 
$$\alpha + \frac{\delta_P(p)y_1 + \delta_P(p)y_2}{2}  = p$$ and 
$$ \frac{\delta_P(p)y_1 + \delta_P(p)y_2}{2} \in \textrm{char.cone}(P)$$
since the characteristic cone is closed under taking nonnegative linear combinations.   
This contradicts the definition of $\delta_P(p)$ as the shortest distance from a point in $K_P$ to $p$ along a vector in $\textrm{char.cone}(P)$. 
\end{proof}

\begin{figure}
	\includegraphics[height=1.5in]{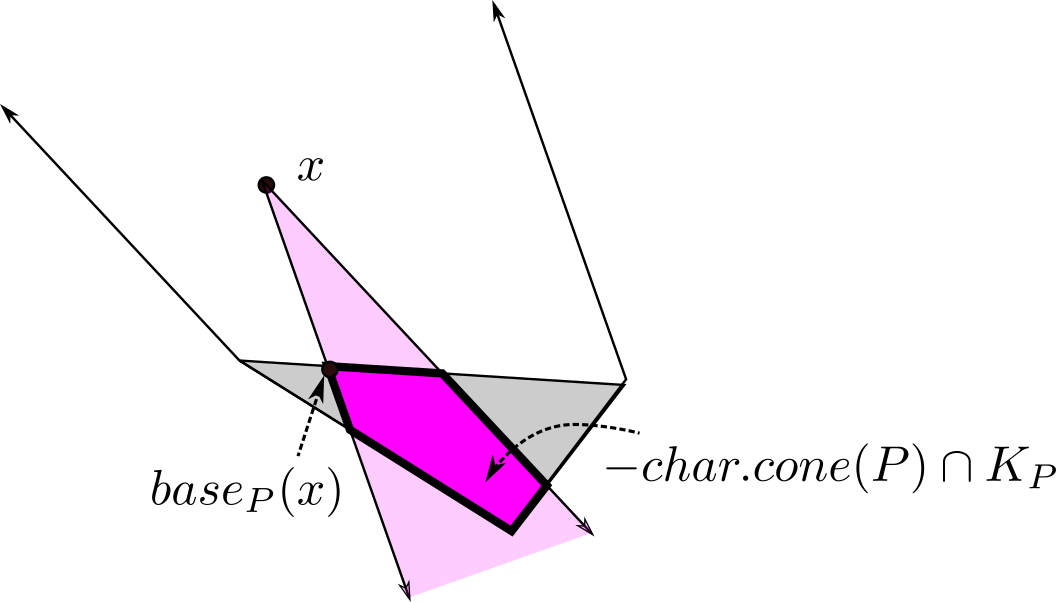}\hspace{1cm}\includegraphics[height=1.5in]{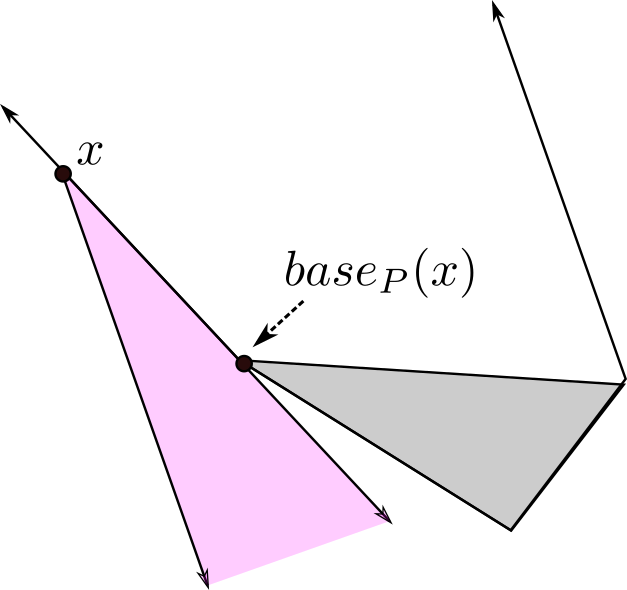}
	
	\caption{The $\textrm{base}_P$ map, visualized as sending each point in $P$ to the closest point in $K_P$ intersected with the negative characteristic cone of $P$ based at $x$.}
\end{figure}

The uniqueness guaranteed by Lemma \ref{l:basewelldefined} enables us to define the \emph{base map} for a single pointed polyhedral set:

\begin{definition}
Let $P \subset \mathbb{R}^n$ be a pointed polyhedral set. 
The \emph{base map} for $P$ is the map 
$$\textrm{base}_P:|P| \to |K_P|$$  defined by setting, for each $p \in |P|$, $\textrm{base}_P(p)$ to be the unique point $x \in |K_P|$ such that 
$$p = x+\delta_P(p)y$$ for some $y \in \textrm{char.cone}(P)$ with $|y|=1$.  
\end{definition}

The following technical lemma will be needed to prove that $\textrm{base}_P$ is a continuous map (Proposition \ref{p:baseInEachCellContinuous}).

\begin{figure}
	\includegraphics[width=3in]{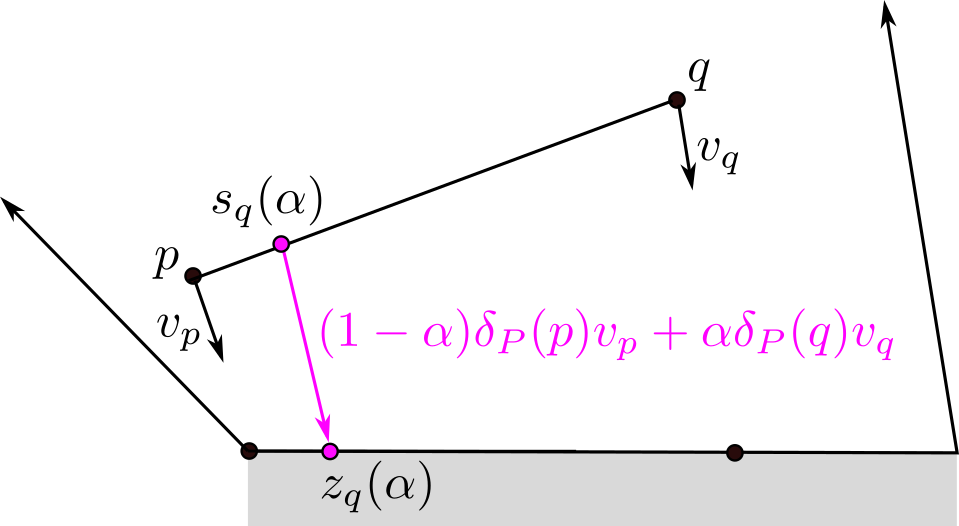}
	\caption{An illustration of the setup for the proof of Lemma \ref{l:deltaPcontinuous}. }
\end{figure}
\begin{lemma} \label{l:deltaPcontinuous}
Let $P \subset \mathbb{R}^n$ be a pointed polyhedral set.  Let $x \in P$.  Then for any $\epsilon >0$, there exists $\eta > 0$ such that for every $x' \in P$,
 if $|x-x'| < \eta$ then $\delta_P(x') \leq \delta_P(x) + \epsilon$. 
\end{lemma}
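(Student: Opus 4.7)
Given $x \in P$ and $\epsilon > 0$, set $\delta = \delta_P(x)$, $b = \mathrm{base}_P(x)$, and $c = x - b \in \mathrm{char.cone}(P)$, so that $|c| = \delta$. The plan is to construct, for every $x' \in P$ sufficiently close to $x$, an explicit decomposition $x' = b' + c'$ with $b' \in K_P$, $c' \in \mathrm{char.cone}(P)$, and $|c'| < \delta + \epsilon$; this immediately yields $\delta_P(x') \leq |c'| < \delta + \epsilon$. The underlying intuition is that a small perturbation $h = x' - x$ can be \emph{absorbed} by combined small perturbations of $b$ inside $K_P$ and of $c$ inside $\mathrm{char.cone}(P)$.

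To make this precise, I would use Theorem \ref{t:polyhedraldecomposition} to parametrize decompositions of points of $P$ via the affine-linear surjection $\Phi \colon \Delta^{m-1} \times \mathbb{R}^k_{\geq 0} \to P$ given by $\Phi(\alpha,\beta) = \sum_i \alpha_i v_i + \sum_j \beta_j r_j$, where the $v_i$ are the vertices of $P$ and the $r_j$ are representatives of the extreme rays of $\mathrm{char.cone}(P)$. The decomposition $(b,c)$ of $x$ corresponds to some $(\alpha^*,\beta^*) \in \Phi^{-1}(x)$ with $\bigl|\sum_j \beta^*_j r_j\bigr| = \delta$. For $x' \in P$ near $x$, the fiber $\Phi^{-1}(x')$ is a nonempty polyhedron; the point $(\alpha^*,\beta^*)$ lies in the product $\Delta^{m-1}\times \mathbb{R}^k_{\geq 0}$ and fails only the equality constraint by the residual $x - x'$.

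Applying a Hoffman-type error bound for polyhedra to $\Phi^{-1}(x')$ produces $(\alpha',\beta') \in \Phi^{-1}(x')$ with $|(\alpha',\beta') - (\alpha^*,\beta^*)| \leq M\,|x' - x|$ for a constant $M$ depending only on the polyhedral structure of $P$ and on $(\alpha^*,\beta^*)$. Setting $b' = \sum_i \alpha'_i v_i \in K_P$ and $c' = \sum_j \beta'_j r_j \in \mathrm{char.cone}(P)$ produces a valid decomposition $x' = b' + c'$ with $|c' - c| \leq C M\,|x' - x|$ for an explicit constant $C$ depending on $\max_j |r_j|$. Choosing $\eta = \epsilon/(CM)$ then gives the required bound $|c'| < \delta + \epsilon$, and hence $\delta_P(x') < \delta + \epsilon$.

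The main obstacle is the Hoffman-type perturbation estimate in the third step: it is a classical fact about the Lipschitz stability of the solution set to a polyhedral system of the form $\{z \in \Delta^{m-1} \times \mathbb{R}^k_{\geq 0} : \Phi(z) = x'\}$ as $x'$ varies, but because the paper is largely self-contained in its polyhedral geometry, I would prefer to establish this directly, using the polyhedral (hence locally conical) structure of $\Phi^{-1}(x)$ together with the uniqueness of $\mathrm{base}_P(x)$ from Lemma \ref{l:basewelldefined} to rule out any sequence of nearby feasibility-perturbations that drifts away from $(\alpha^*,\beta^*)$. An equivalent formulation is to verify that the tangent cone $T_x P$ decomposes as $T_b K_P + T_c \mathrm{char.cone}(P)$, after which the construction of $b'$ and $c'$ becomes a direct matter of splitting $h$ into feasible pieces in each tangent cone and translating along them.
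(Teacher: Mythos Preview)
Your approach via Hoffman's error bound is correct and would work: the feasibility system $\{(\alpha,\beta)\in\Delta^{m-1}\times\mathbb{R}^k_{\geq 0}:\Phi(\alpha,\beta)=x'\}$ has a fixed constraint matrix as $x'$ varies, so the Hoffman constant is uniform in $x'$ (and, contrary to what you wrote, does not depend on $(\alpha^*,\beta^*)$), yielding the Lipschitz perturbation you need. Your alternative of establishing this ``directly'' via the tangent-cone identity $T_xP=T_bK_P+T_c\,\mathrm{char.cone}(P)$ is, however, underdeveloped: one inclusion is easy, but the other is essentially equivalent to what you are trying to prove, and even granted the identity you would still need a polyhedral step to upgrade a first-order statement to honest feasibility $b'\in K_P$, $c'\in\mathrm{char.cone}(P)$.

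The paper takes a more elementary and self-contained route that avoids Hoffman entirely. After restricting to a ball $B_r(p)$ meeting only those bounding hyperplanes of $P$ that pass through $p$ (so that $B_r(p)\cap P$ is a cone with apex $p$), every nearby $x'\in P$ can be written as $x'=(1-\alpha)p+\alpha q$ for some $q\in S_r(p)\cap P$ and small $\alpha$. The paper then simply takes the convex combination of the two known decompositions $p=\mathrm{base}_P(p)+\delta_P(p)v_p$ and $q=\mathrm{base}_P(q)+\delta_P(q)v_q$: the point $(1-\alpha)\,\mathrm{base}_P(p)+\alpha\,\mathrm{base}_P(q)$ lies in $K_P$ by convexity, the displacement lies in $\mathrm{char.cone}(P)$ because cones are closed under nonnegative combinations, and a triangle inequality gives $\delta_P(x')\le\delta_P(p)+2\alpha M$ for a crude local bound $M$. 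Your approach packages the same convexity into Hoffman's lemma and in fact yields a global Lipschitz estimate for $\delta_P$, which is stronger than the upper-semicontinuity the lemma asserts; the cost is importing a nontrivial external result that the paper otherwise does not need.
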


\begin{proof}
Fix any point $p \in P$.  Denote by $v_p$ the unique element of $-\textrm{char.cone}(P)$ such that $$p+v_p\delta_P(p) = \textrm{base}_P(p).$$ Let $H_1,\ldots,H_n$ be an irredundant collection of hyperplanes that bound $P$.   Then there exists $r = r(p)  > 0$ such that for each $i = 1, \ldots, n$, either $p \in H_i$ or $\overline{B_r(p)} \cap H_i = \emptyset$.  (Here, $\overline{B_r(p)}$ denotes the closed ball of radius $r$ centered at $x$.) Define 
\begin{align*}
D_r(p) := &\{q \in \mathbb{R}^n \mid |q-p| \leq r\} \cap P, \\ 
S_r(p) := &\{q \in \mathbb{R}^n \mid |q-p| = r\} \cap P, \\
M_p := & \sup\{|q - k| \mid q \in D_r(x), k \in K_P\} < \infty.\\
\end{align*} 
For each point $q \in S_r(p)$, define $v_q$ to be the unique element of $-\textrm{char.cone}(P)$ such that $$q + v_q\delta_P(q) = \textrm{base}_P(q).$$
 Also for each $q \in S_r(p)$, define the path $s_q:[0,1] \to P$  by 
$$s_q(\alpha)  \coloneqq (1-\alpha)p +\alpha q.$$  
Note $s_q$ is the straight-line path from $p$ to $q$.  
By construction, 
for any $0 < r' \leq r$, 
\begin{equation} \label{eq:wholenbhd}
\overline{B(p,r')}  \cap P = \bigcup_{q \in S_r(q),  \alpha \in [0, r'/r ]} s_q(\alpha)
\end{equation}

Define, for each $q \in S_r(p)$, the path $z_q:[0,1] \to \mathbb{R}^n$ by 
$$z_q(\alpha) \coloneqq s(\alpha)+ (1-\alpha) \delta_P(p)v_p +  \alpha \delta_P(q)v_q.$$ 
Note that 
$$z_q(\alpha) = (1-\alpha)(p+\delta_P(p)v_p) + \alpha(q+ \delta_P(q)v_q) = (1-\alpha) \textrm{base}_P(p) +\alpha \textrm{base}_P(q).$$
Thus $z_q$ is a straight-line path from $\textrm{base}_P(p)$ to $\textrm{base}_P(q)$. 
Therefore, since $\textrm{base}_P(p)$ and $\textrm{base}_P(q)$ are in $K_P$ and $K_P$ is convex, $z_q(\alpha) \in K_P$ for all $\alpha$.  

Note that 
$$(1-\alpha)\delta_P(p)v_p +\alpha\delta_P(q)v_q \in -\textrm{char.cone}(P)$$ since characteristic cones are, by definition, closed under taking nonnegative linear combinations.  
Thus, for any $q \in S_r(p)$ and any $\alpha \in [0,1]$, 
\begin{multline} \label{eq:badbound}
\delta_P(s_q(\alpha)) \leq |s_q(\alpha) - z_q(\alpha)| =|(1-\alpha) \delta_P(p)v_p + \alpha \delta_P(q)v_q|  \\
 \leq |\delta_P(p)v_p| + |\alpha\delta_P(p)v_p| + |\alpha \delta_P(q)v_q| \leq \delta_P(p) + \alpha 2 M.
\end{multline}

 Set $$\eta \coloneqq \min \left \{ \frac{\epsilon r}{2M},r \right \}.$$  Consider any point $p' \in P$ such that 
 $$|p'-p| < \eta =  \min \left\{ \frac{\epsilon r}{2M},r \right\} $$
   By \eqref{eq:wholenbhd}, there exists a point $q_0 \in S_r(p)$ and a real number $\alpha_0 \in [0, \frac{\epsilon}{2M}]$ such that $p' = s_{q_0}(\alpha_0)$. 
   Hence, by \eqref{eq:badbound}, we have
   $$\delta_P(p') = \delta_P(s_{q_0}(\alpha_0)) \leq \delta_P(p) + \alpha_0 2 M \leq \delta_P(p) + \epsilon.$$

\end{proof}

\begin{proposition} \label{p:baseInEachCellContinuous}
Let $P \subset \mathbb{R}^n$ be a pointed polyhedral set of dimension $n$.  
Then the function $\textrm{base}_P:|P|  \to |K_P|$ is continuous.  
\end{proposition}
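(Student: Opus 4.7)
The plan is to prove sequential continuity of $\textrm{base}_P$ using three ingredients: the upper semi-continuity of $\delta_P$ provided by Lemma \ref{l:deltaPcontinuous}, the compactness of $K_P$, and the closedness of $\textrm{char.cone}(P)$ (which, as noted in the proof of Lemma \ref{l:basewelldefined}, is an intersection of finitely many closed half-spaces). Given a convergent sequence $p_n \to p$ in $|P|$, I will show $\textrm{base}_P(p_n) \to \textrm{base}_P(p)$ by a subsequence-limit argument, appealing to the uniqueness clause of Lemma \ref{l:basewelldefined} to pin down every subsequential limit.

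For each $n$, write $p_n = x_n + t_n y_n$ where $x_n := \textrm{base}_P(p_n) \in K_P$, $t_n := \delta_P(p_n) \geq 0$, and $y_n \in \textrm{char.cone}(P)$ is a unit vector. By Lemma \ref{l:deltaPcontinuous}, $\{t_n\}$ is eventually bounded above by $\delta_P(p) + 1$. Combining this with compactness of $K_P$, I may pass to a subsequence along which $x_n \to x_\infty \in K_P$ and $t_n \to t_\infty$ for some $t_\infty \geq 0$, with $t_\infty \leq \delta_P(p)$ by upper semi-continuity.

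Now split into two cases. If $t_\infty > 0$, then for large $n$ one can write $y_n = (p_n - x_n)/t_n$, and the right-hand side converges to a vector $y_\infty := (p - x_\infty)/t_\infty$, which inherits unit length and lies in the closed set $\textrm{char.cone}(P)$. Passing to the limit in $p_n = x_n + t_n y_n$ yields the decomposition $p = x_\infty + t_\infty y_\infty$ with $y_\infty$ a unit vector in $\textrm{char.cone}(P)$. If instead $t_\infty = 0$, then $t_n y_n = p_n - x_n \to 0$, so $x_\infty = p$, which in particular places $p$ in $K_P$ and forces $\delta_P(p) = 0$. In either case, the decomposition realized by $(x_\infty, t_\infty)$ shows $\delta_P(p) \leq t_\infty$, and together with $t_\infty \leq \delta_P(p)$ gives $t_\infty = \delta_P(p)$. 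The uniqueness assertion in Lemma \ref{l:basewelldefined} then identifies $x_\infty = \textrm{base}_P(p)$.

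Since every subsequence of $\{\textrm{base}_P(p_n)\}$ admits a further subsequence converging to the common limit $\textrm{base}_P(p)$, the full sequence converges to $\textrm{base}_P(p)$, which establishes continuity. I expect the main subtlety to be the degenerate case $t_\infty = 0$, where the auxiliary direction vectors $y_n$ need not converge and the rewriting $y_n = (p_n - x_n)/t_n$ breaks down; this is handled cleanly by observing that in that case $t_n y_n \to 0$ is all that is needed to force $x_\infty = p$, after which the uniqueness in Lemma \ref{l:basewelldefined} finishes the argument.
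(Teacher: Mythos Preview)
Your proof is correct and follows essentially the same approach as the paper: both arguments use the upper semi-continuity of $\delta_P$ from Lemma~\ref{l:deltaPcontinuous}, compactness of $K_P$, closedness of $\textrm{char.cone}(P)$, and the uniqueness clause of Lemma~\ref{l:basewelldefined} to identify the subsequential limit of $\textrm{base}_P(p_n)$. The only cosmetic difference is that the paper frames this as a proof by contradiction (assume some sequence with $|\textrm{base}_P(p_n)-\textrm{base}_P(p)|\geq\epsilon$ and derive a contradiction to uniqueness), whereas you argue directly via the ``every subsequence has a convergent further subsequence'' device; your explicit case split on $t_\infty=0$ versus $t_\infty>0$ is handled implicitly in the paper's argument since the contradiction hypothesis $x'\neq\textrm{base}_P(p)$ forces $p\neq x'$.
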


\begin{proof} 
Suppose $\textrm{base}_P$ is not continuous at $p \in P$.  This means there exists $\epsilon > 0$ and a sequence of points $\{p_n\}_{n \in \mathbb{N}}$ in $P$ such that 
$\lim_{n \to \infty} p_n = p$ but  $$|\textrm{base}_P(p_n) - \textrm{base}_P(p)| \geq \epsilon$$ for all $n \in \mathbb{N}$.  Since $K_P$ is compact, we may, by passing to a subsequence if necessary, assume without loss of generality that the sequence $\{\textrm{base}_P(p_n)\}_{n \in \mathbb{N}}$ converges to a point $x' \in K_P$.  Notice $\textrm{base}_P(p) \neq x'$, since $|\textrm{base}_P(p)-x'| \geq \epsilon$.

The vectors $p_n - \textrm{base}_P(p_n)$ are all in $\textrm{char.cone}(P)$ by definition.  
Since 
\begin{equation} \label{eq:twolimits}
p-x' =  \left(\lim_{n \to \infty} p_n \right) -  \left( \lim_{n \to \infty} \textrm{base}_P(p_n) \right) = \lim_{n \to \infty} (p_n - \textrm{base}_P(p_n)),
\end{equation}
and $\textrm{char.cone}(P)$ is closed, \eqref{eq:twolimits} implies 
\begin{equation} \label{eq:p-x'}
p-x' 
 \in \textrm{char.cone}(P).
\end{equation}

 Now fix any $\epsilon > 0$. Pick $N_1 \in \mathbb{N}$ such that $|p-p_n| < \epsilon/2$ for all $n \geq N_1$. By Lemma 
 \ref{l:deltaPcontinuous}, we may pick $N_2 \in \mathbb{N}$ such that 
$$\delta_P(p_n) = |p_n - \textrm{base}_P(p_n)| \leq |p-\textrm{base}_P(p)| + \epsilon/2$$
 for all $n \geq N_2$.  Hence, for $n \geq \textrm{max}\{N_1,N_2\}$, we have 
\begin{multline} \label{eq:notfaroff}
|p-\textrm{base}_P(p_n)| \leq |p-p_n| + |p_n - \textrm{base}_P(p_n)|  \\ 
\leq \epsilon/2 + |p - \textrm{base}_P(p)| + \epsilon/2 = \delta_P(p) + \epsilon.
\end{multline}
Since $\epsilon$ was arbitrary, \eqref{eq:notfaroff} implies
\begin{equation} \label{eq:lengthbound} |p-x'| = |p- \lim_{n \to \infty} \textrm{base}_P(p_n)| \leq |p-\textrm{base}_P(p)|. \end{equation}

Thus 
\begin{equation*}  p = x'+ |p-x'| \frac{p-x'}{|p-x'|},\end{equation*}
where $x'$ is a point in $K_P$ distinct from $\textrm{base}_P(p)$, $\frac{p-x'}{|p-x'|} \in \textrm{char.cone}(P)$ by \eqref{eq:p-x'} and has unit length, and $|p-x'| \leq \delta_P(p)$ by \eqref{eq:lengthbound}.  
This contradicts the fact that $x=\textrm{base}_P(p)$ is the unique (by Lemma \ref{l:basewelldefined}) point in $K_P$ such that $p = x+c v$ for some $x \in K_P$, unit vector $v \in \textrm{char.cone}(P)$ and constant $0 \leq c \leq \delta_P(p)$.  

\end{proof}

\begin{lemma} \label{l:retractionpreservesfacets}
Let $P \subset \mathbb{R}^n$ be a pointed polyhedral set and let $F$ be a face of $P$.  Then $\textrm{base}_{P}(F) \subseteq F$.  
\end{lemma}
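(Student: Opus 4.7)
The plan is to use the defining property that a face $F$ of $P$ is either $P$, $\emptyset$, or the intersection $P \cap H$ for some supporting hyperplane $H$ of $P$. The cases $F = P$ and $F = \emptyset$ are trivial (in the latter there is nothing to check), so I will assume $F = P \cap H$ for a supporting hyperplane $H = \{z \in \mathbb{R}^n : w \cdot z = c\}$ with $P \subseteq \{z : w \cdot z \geq c\}$.

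Fix an arbitrary $p \in F$ and write $p = \textrm{base}_P(p) + \delta_P(p) y$ as in the definition of the base map, where $y \in \textrm{char.cone}(P)$ is a unit vector. I will show that $\textrm{base}_P(p) \in H$, which forces $\textrm{base}_P(p) \in P \cap H = F$ since $\textrm{base}_P(p) \in K_P \subseteq P$. The key inputs are:
\begin{enumerate}
\item $\textrm{base}_P(p) \in K_P \subseteq P$, so $w \cdot \textrm{base}_P(p) \geq c$;
\item $y \in \textrm{char.cone}(P)$ means $\textrm{base}_P(p) + s y \in P$ for all $s \geq 0$, so $w \cdot \textrm{base}_P(p) + s (w \cdot y) \geq c$ for all $s \geq 0$; letting $s \to \infty$ yields $w \cdot y \geq 0$;
\item $p \in H$, so $w \cdot p = c$.
\end{enumerate}
Expanding $w \cdot p = w \cdot \textrm{base}_P(p) + \delta_P(p) (w \cdot y) = c$ and combining with the inequalities above forces $w \cdot \textrm{base}_P(p) = c$, as desired.

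There is essentially no obstacle once one invokes Theorem~\ref{t:polyhedraldecomposition} to guarantee that the decomposition $p = \textrm{base}_P(p) + \delta_P(p) y$ exists with $\textrm{base}_P(p) \in K_P$ and $y \in \textrm{char.cone}(P)$, and then observes that a supporting hyperplane of $P$ must also behave well with respect to the recession directions. The only minor technicality is to handle the case $\delta_P(p) = 0$ (for which $p = \textrm{base}_P(p)$ already lies in $F$) and the degenerate case $F = \emptyset$ separately, but both are immediate from the definitions.
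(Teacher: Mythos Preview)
Your proof is correct and rests on the same core observation as the paper's: a recession direction of $P$ cannot point strictly across a supporting hyperplane, so the base point of $p \in F$ must remain on that hyperplane and hence in $F$. The paper packages this as a geometric contradiction (if $x = \textrm{base}_P(p) \notin F$, the ray from $x$ through $p$ would exit $P$, contradicting $y \in \textrm{char.cone}(P)$), whereas you give the direct linear-algebraic version by pairing with the functional $w$; your formulation is arguably cleaner and sidesteps the need to reason about rays ``crossing'' lower-dimensional affine hulls.
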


\begin{proof}  
If $F$ is either $P$ itself or a $0$-cell of $P$, the result is immediate.  So assume $F$ is a proper face of $P$ (i.e. $0 < \textrm{dim}(F) < \textrm{dim}(P)$) and 
fix a point $p \in F$.  Suppose, for a contradiction, there exists $x \in K_P \setminus F$ such that $p = x+ty$ for some $y \in \textrm{char.cone}(P)$ with $|y| = 1$ and $t > 0$. 

Let $H$ be the affine hull of $F$.  Since $P$ is convex, $F = P \cap H$.   
Since $x \not \in F$, this implies $x \not \in H$. Therefore the vector $y = \tfrac{p - x}{t}$ is not in the tangent space $T_pH$, i.e. the ray $\{x+ty \mid t \geq 0\}$ ``crosses'' $F$.    Since $P$ is convex and $F$ is in the boundary of $P$, points of the form $x+ty$ are in $P$ whenever $0 \leq t \leq |p-x|$ and are not in $P$ whenever $t > |p-x|$.  In particular, 
$$x+(|p-x|+\epsilon)y = p+\epsilon y \not \in P$$ for $\epsilon > 0$.   This contradicts the assumption that $y \in \textrm{char.cone}(P)$.
\end{proof}

\begin{lemma} \label{l:facebyface}
Let $P$ be an $n$-dimensional polyhedral set in $\mathbb{R}^n$ and let $F$ be a face of $P$.  Then the maps $\textrm{base}_P$ and $\textrm{base}_F$ agree on $|F|$. 
\end{lemma}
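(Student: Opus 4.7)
The plan is to show that the triple realizing $\textrm{base}_P(p)$ is also a valid candidate for computing $\textrm{base}_F(p)$, and vice versa, then to invoke the uniqueness statement from Lemma \ref{l:basewelldefined}. First I would verify that $\textrm{base}_F$ is even well-defined: writing $P = \{x : Ax \leq b\}$ and $F = H \cap P$ for a supporting hyperplane $H = \{x : \ell(x) = c\}$ (with $P$ in the half-space $\ell(x) \geq c$), we get $\textrm{char.cone}(F) = \textrm{char.cone}(P) \cap \{y : \ell(y) = 0\}$, so $\textrm{lin.space}(F) \subseteq \textrm{lin.space}(P) = \{0\}$ and $F$ is pointed.

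Next I would establish the two key set-theoretic ingredients. First, the inclusion $K_F \subseteq K_P$ is immediate once one notes that vertices of $F$ are vertices of $P$ (faces of faces are faces). For the reverse containment relative to $F$, I would show $K_P \cap F = K_F$: given $x = \sum \lambda_i v_i \in K_P \cap F$ with $v_i$ vertices of $P$ and $\lambda_i > 0$, applying $\ell$ gives $c = \ell(x) = \sum \lambda_i \ell(v_i)$ while every $\ell(v_i) \geq c$, forcing $\ell(v_i) = c$ whenever $\lambda_i > 0$, hence $v_i \in V_P \cap H = V_F$ and $x \in K_F$. Second, I would record that any $y \in \textrm{char.cone}(P)$ parallel to $\mathrm{aff}(F)$ automatically lies in $\textrm{char.cone}(F)$, since the defining inequalities then hold with $\ell(y) = 0$.

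Now for the main argument: fix $p \in F$ and let $(x_P, y_P, t_P)$ be the triple with $x_P = \textrm{base}_P(p)$, $t_P = \delta_P(p)$, and $p = x_P + t_P y_P$. By Lemma \ref{l:retractionpreservesfacets}, $x_P \in F$, so $x_P \in K_P \cap F = K_F$. Moreover, when $t_P > 0$, $y_P = (p - x_P)/t_P$ satisfies $\ell(y_P) = (\ell(p) - \ell(x_P))/t_P = 0$, so $y_P \in \textrm{char.cone}(F)$; the case $t_P = 0$ is trivial since then $p = x_P \in K_F$. Thus $(x_P, y_P, t_P)$ is a valid triple for computing $\delta_F(p)$, giving $\delta_F(p) \leq \delta_P(p)$. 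Conversely, any triple $(x_F, y_F, t_F)$ realizing $\textrm{base}_F(p)$ satisfies $x_F \in K_F \subseteq K_P$ and $y_F \in \textrm{char.cone}(F) \subseteq \textrm{char.cone}(P)$, so it is valid for $\delta_P(p)$, giving $\delta_P(p) \leq \delta_F(p)$. Hence $\delta_F(p) = \delta_P(p)$, and applying the uniqueness clause of Lemma \ref{l:basewelldefined} inside $P$ to the triple $(x_F, y_F, t_F)$ (which now achieves the $P$-infimum) forces $x_F = x_P$, i.e. $\textrm{base}_F(p) = \textrm{base}_P(p)$.

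The step I expect to be least routine is the identification $K_P \cap F = K_F$, which requires the linear-functional argument above rather than a formal manipulation of defining inequalities; everything else is bookkeeping around the two containments $K_F \subseteq K_P$ and $\textrm{char.cone}(F) \subseteq \textrm{char.cone}(P)$.
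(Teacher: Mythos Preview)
Your proof is correct and follows the same overall strategy as the paper: establish $\delta_P(p)=\delta_F(p)$ by checking both inequalities and then invoke the uniqueness clause of Lemma~\ref{l:basewelldefined}. The paper's proof is extremely terse---it simply asserts that ``since $\textrm{base}_P(F)\subset F$ by Lemma~\ref{l:retractionpreservesfacets} and $F\subset P$, we must have $\delta_P(p)=\delta_F(p)$''---whereas you make explicit the supporting facts the paper leaves implicit: that $F$ is pointed, that $K_P\cap F=K_F$ (via the linear-functional argument), and that $\textrm{char.cone}(F)=\textrm{char.cone}(P)\cap\{\ell=0\}$, so that the $P$-minimizing triple is indeed admissible for $F$ and vice versa.
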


\begin{proof}
Since $\textrm{base}_P(F) \subset F$ by Lemma \ref{l:retractionpreservesfacets} and $F \subset P$, we must have $\delta_{P}(p) = \delta_{F}(p)$ for every point $p \in F$.   By Lemma \ref{l:basewelldefined}, there is a unique point in $F$ that that realizes this minimum.  
\end{proof}

An immediate corollary of Lemma \ref{l:facebyface} is that base maps for different cells of a polyhedral complex in $\mathbb{R}^n$ agree on shared faces:

\begin{corollary} \label{c:globalretractwelldefined}
Let $P_1$ and $P_2$ be two distinct, pointed, polyhedral sets in a polyhedral complex embedded in $\mathbb{R}^n$ that share a common face $F$.  Then for every point $p \in |F|$, $\textrm{base}_{P_1}(p) = \textrm{base}_{P_2}(p)$. 
\end{corollary}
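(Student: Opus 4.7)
The plan is to derive this directly from Lemma \ref{l:facebyface}, which asserts that the base map of a pointed polyhedral set agrees with the base map of any of its faces on that face. The key observation is that the common face $F$ is, simultaneously, a face of $P_1$ and a face of $P_2$; since $\textrm{base}_F$ is an intrinsic object depending only on $F$, and not on any ambient polyhedral set containing it, applying Lemma \ref{l:facebyface} with $P = P_1$ and again with $P = P_2$ will force both base maps to agree with $\textrm{base}_F$ pointwise on $|F|$.

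Concretely, I would argue as follows. Fix a point $p \in |F|$. Since $F$ is a face of $P_1$, Lemma \ref{l:facebyface} yields $\textrm{base}_{P_1}(p) = \textrm{base}_{F}(p)$. Since $F$ is likewise a face of $P_2$, the same lemma yields $\textrm{base}_{P_2}(p) = \textrm{base}_{F}(p)$. Combining these two equalities gives $\textrm{base}_{P_1}(p) = \textrm{base}_{P_2}(p)$, as desired.

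The one subtlety to be careful with is the dimension hypothesis in Lemma \ref{l:facebyface}, which is stated under the assumption that $P$ is $n$-dimensional in $\mathbb{R}^n$. In the setting of the corollary, $P_1$ and $P_2$ are cells of a polyhedral complex embedded in $\mathbb{R}^n$ and may have dimension strictly less than $n$. This is a cosmetic issue rather than a real obstacle: the definition of $\textrm{base}_P$ and the proof of Lemma \ref{l:facebyface} depend only on $\textrm{char.cone}(P)$, $K_P$, and the convexity of $P$, none of which require $P$ to be full-dimensional in the ambient space. If desired, one may pass to the affine hull of each $P_i$ (which is a copy of $\mathbb{R}^{\dim P_i}$) and apply Lemma \ref{l:facebyface} there; the resulting base map is the same. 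I would flag this point explicitly and then conclude.

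The argument is essentially a one-line deduction from Lemma \ref{l:facebyface}, so I do not expect any real obstacle beyond making the dimension remark precise.
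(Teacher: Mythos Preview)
Your proposal is correct and matches the paper's approach exactly: the paper states the result as an immediate corollary of Lemma \ref{l:facebyface} without further elaboration, and your two applications of that lemma (once for $P_1$, once for $P_2$, meeting at $\textrm{base}_F$) are precisely the intended one-line deduction. Your remark on the dimension hypothesis is a fair observation that the paper glosses over, and your suggested fix via passing to the affine hull is the natural way to handle it.
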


As a consequence of Corollary \ref{c:globalretractwelldefined}, we can formulate the following definition:

\begin{definition} \label{d:basemap}
Let $\mathcal{C}$ be a polyhedral complex embedded in $\mathbb{R}^n$ in which all  cells are pointed.   Define the function 
$$\textrm{base}_{\mathcal{C}}: |\mathcal{C}| \to |\textrm{com.part}(\mathcal{C})|$$
 by whichever of the functions $\textrm{base}_P$, for $P \in \mathcal{C}$, is defined. 
\end{definition}

\begin{proposition} \label{l:basecontinuous}
 For any polyhedral complex $\mathcal{C}$ embedded in  $\mathbb{R}^n$, the map $\textrm{base}_{\mathcal{C}}$ is continuous.  

\end{proposition}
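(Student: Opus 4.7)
The plan is to invoke the pasting (gluing) lemma for continuous functions, exploiting the fact that by construction $\textrm{base}_{\mathcal{C}}$ is a piecewise concatenation of the continuous maps $\textrm{base}_P$ for cells $P \in \mathcal{C}$, and these pieces agree on overlaps by Corollary \ref{c:globalretractwelldefined}.

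First I would observe the basic topological setup. A polyhedral complex, by definition, contains only finitely many cells, and each cell $P \in \mathcal{C}$ is a closed subset of $\mathbb{R}^n$, being an intersection of closed half-spaces; in particular $P$ is closed in the subspace topology on $|\mathcal{C}|$. Thus $|\mathcal{C}| = \bigcup_{P \in \mathcal{C}} P$ is a finite closed cover.

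Next I would verify that each restriction $\textrm{base}_{\mathcal{C}}|_P = \textrm{base}_P$ is continuous. Proposition \ref{p:baseInEachCellContinuous} provides this when $\dim(P) = n$. For cells $P$ of dimension $k < n$, the key observation is that every ingredient in the definition of $\textrm{base}_P$ — namely $K_P$, $\textrm{char.cone}(P)$, and $\delta_P$ — is intrinsic to $P$ and depends only on the affine hull $\textrm{aff}(P)$. Since $\textrm{aff}(P)$ is affinely isomorphic to $\mathbb{R}^k$ and $P$ is a pointed $k$-dimensional polyhedral set inside $\textrm{aff}(P)$, Proposition \ref{p:baseInEachCellContinuous} applied within $\textrm{aff}(P)$ delivers continuity of $\textrm{base}_P: P \to K_P$, and composing with the continuous inclusion $\textrm{aff}(P) \hookrightarrow \mathbb{R}^n$ gives continuity as a map into $|\textrm{com.part}(\mathcal{C})| \subset \mathbb{R}^n$.

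Compatibility on intersections is then immediate: whenever $P_1, P_2 \in \mathcal{C}$ meet in a common face $F = P_1 \cap P_2$, Corollary \ref{c:globalretractwelldefined} gives $\textrm{base}_{P_1}|_F = \textrm{base}_{P_2}|_F$, so the piecewise definitions agree on all pairwise overlaps. Finally I would invoke the pasting lemma: a function on a space expressed as a finite union of closed subsets is continuous provided its restriction to each subset is continuous and the restrictions agree on overlaps. Applied to $X = |\mathcal{C}|$, the finite closed cover $\{P\}_{P \in \mathcal{C}}$, and $f = \textrm{base}_{\mathcal{C}}$, this yields the desired continuity.

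The only subtle point — and the main (minor) obstacle — is the dimension-handling described above, since Proposition \ref{p:baseInEachCellContinuous} is stated only for full-dimensional $P \subset \mathbb{R}^n$. This is resolved cleanly by the affine-hull reduction, since the proof of that proposition uses only compactness of $K_P$, closedness and convexity of $\textrm{char.cone}(P)$, and the uniqueness from Lemma \ref{l:basewelldefined}, none of which reference the ambient dimension. Everything else is a direct application of standard point-set topology.
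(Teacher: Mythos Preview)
Your proof is correct and follows essentially the same approach as the paper: invoke continuity of each $\textrm{base}_P$ (Proposition~\ref{p:baseInEachCellContinuous}), agreement on shared faces (Corollary~\ref{c:globalretractwelldefined}), and then the pasting lemma on the finite closed cover by cells. If anything, you are more careful than the paper, which only mentions top-dimensional cells and leaves the pasting lemma implicit, whereas you explicitly handle lower-dimensional cells via the affine-hull reduction.
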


\begin{proof}
Proposition  \ref{p:baseInEachCellContinuous} asserts that for each top-dimensional cell $P \in \mathcal{C}$, the map $\textrm{base}_P$ is continuous.  Corollary \ref{c:globalretractwelldefined} asserts that if two cells share a face, the base maps for those cells agree on the shared face.  Consequently, $\textrm{base}_{\mathcal{C}}$ is continuous. 
\end{proof}

\begin{proposition} \label{p:deformationretract}
Let $\mathcal{C}$ be a polyhedral complex embedded in $\mathbb{R}^n$ in which all cells are pointed.  
Define $\Phi: |\mathcal{C}| \times [0,1] \to |\mathcal{C}|$ by 
$$\Phi(p,t) = t \textrm{base}_{\mathcal{C}}(p) + (1-t)p.$$
Then $\Phi$ is a strong deformation retraction of $|\mathcal{C}|$ onto $|\textrm{com.part}(\mathcal{C})|$.  
\end{proposition}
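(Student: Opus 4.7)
The plan is to verify the four defining conditions of a strong deformation retraction $\Phi: |\mathcal{C}| \times [0,1] \to |\mathcal{C}|$ onto $|\textrm{com.part}(\mathcal{C})|$: (a) $\Phi(\cdot,0) = \textrm{Id}$, (b) $\Phi(p,1) \in |\textrm{com.part}(\mathcal{C})|$ for all $p$, (c) $\Phi(q,t) = q$ for all $q \in |\textrm{com.part}(\mathcal{C})|$ and all $t$, and (d) $\Phi$ is continuous. The main nontrivial content is the implicit additional claim that the image of $\Phi$ actually lies in $|\mathcal{C}|$; everything else follows quickly from work already done.

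First I would dispense with the easy conditions. Condition (a) is immediate from the formula. Condition (b) follows because $\Phi(p,1) = \textrm{base}_{\mathcal{C}}(p) \in |\textrm{com.part}(\mathcal{C})|$ by Definition \ref{d:basemap}. For condition (c), suppose $q \in |\textrm{com.part}(\mathcal{C})|$, say $q \in K_P$ for some $P \in \mathcal{C}$. Then writing $q = q + 0 \cdot y$ for any $y \in \textrm{char.cone}(P)$ with $|y|=1$ shows $\delta_P(q) = 0$, so the uniqueness clause of Lemma \ref{l:basewelldefined} forces $\textrm{base}_P(q) = q$, and hence $\Phi(q,t) = t q + (1-t) q = q$. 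Condition (d) is a one-line argument: $\Phi$ is a continuous function of $\textrm{base}_{\mathcal{C}}(p)$, $p$, and $t$, and $\textrm{base}_{\mathcal{C}}$ is continuous by Proposition \ref{l:basecontinuous}.

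The key step I expect to be the main obstacle is verifying that $\Phi(p,t)$ actually lies in $|\mathcal{C}|$ for every $t \in [0,1]$. To address this I would show the stronger \emph{cell-preserving} statement: for every cell $P \in \mathcal{C}$ and every $p \in P$, the whole straight-line homotopy $\Phi(p,t)$ stays in $P$. By Lemma \ref{l:basewelldefined} we may write $p = \textrm{base}_P(p) + \delta_P(p) y$ for some unit vector $y \in \textrm{char.cone}(P)$, so
\begin{equation*}
\Phi(p,t) = t\,\textrm{base}_P(p) + (1-t)p = \textrm{base}_P(p) + (1-t)\,\delta_P(p)\, y.
\end{equation*}
Since $\textrm{base}_P(p) \in K_P$ and $(1-t)\delta_P(p) \geq 0$, the vector $(1-t)\delta_P(p)\, y$ lies in $\textrm{char.cone}(P)$ (which is closed under nonnegative scalings). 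Because $P$ is pointed, the Minkowski decomposition Theorem \ref{t:polyhedraldecomposition} yields $P = K_P + \textrm{char.cone}(P)$, so $\Phi(p,t) \in P$, as needed. Consistency of this description across cells that share a face is exactly the content of Lemma \ref{l:facebyface} and Corollary \ref{c:globalretractwelldefined}, so $\Phi$ is well-defined on all of $|\mathcal{C}| \times [0,1]$ with image in $|\mathcal{C}|$.

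Combining the four conditions and the image-containment argument then gives that $\Phi$ is a strong deformation retraction of $|\mathcal{C}|$ onto $|\textrm{com.part}(\mathcal{C})|$. A minor bookkeeping remark: the same cell-preserving property implies $\Phi$ is ``cellular'' in the sense that it carries each cell of $\mathcal{C}$ into itself, which will be useful for the subsequent statements in Proposition \ref{p:pointeddeformationretraction} relating $\mathcal{C}$ and $\textrm{com.part}(\mathcal{C})$ via a surjective cellular map respecting the face poset.
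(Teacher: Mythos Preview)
Your proof is correct and follows the same approach as the paper: the heart of the argument is that continuity of $\Phi$ follows from continuity of $\textrm{base}_{\mathcal{C}}$ (Proposition \ref{l:basecontinuous}). The paper's proof is a one-liner asserting that continuity is the only thing left to check; you are more thorough, in particular spelling out the image-containment $\Phi(p,t)\in P$ via $P = K_P + \textrm{char.cone}(P)$, which the paper leaves implicit.
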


\begin{proof}
It suffices to show that $\Phi$ is continuous --  but this is an immediate consequence of the fact that $\textrm{base}_{\mathcal{C}}$ is continuous (Proposition \ref{l:basecontinuous}).  
\end{proof}

\begin{proof}[Proof of Proposition ~\ref{p:pointeddeformationretraction}]
The strong deformation retract $\Phi$ of Proposition \ref{p:deformationretract} induces the map $\textrm{base}_{\mathcal{C}}:|\mathcal{C}| \to |\textrm{com.part}(\mathcal{C})|$.  By Lemma \ref{l:retractionpreservesfacets}, the map $\textrm{base}_{\mathcal{C}}$ in turn induces a cellular map
$$\underline{\textrm{base}}_{\mathcal{C}}:\mathcal{C} \to \textrm{com.part}(\mathcal{C})$$ defined by setting $\underline{\textrm{base}}_{\mathcal{C}}(C)$ to be the smallest cell in $\textrm{com.part}(\mathcal{C})$ that contains the set $\textrm{base}_{\mathcal{C}}(C)$.  Since $\textrm{base}_{\mathcal{C}}$ is the identity on $|\textrm{com.part}(\mathcal{C})|$, the cellular map $\underline{\textrm{base}}_{\mathcal{C}}$ is surjective.  
\end{proof}

\subsection{The canonical polytopal complex of unpointed polyhedral complexes} \label{s:essentialization}

This subsection extends the definitions and results of Subsection \ref{s:polytopalcpxpointed} to arbitrary polyhedral complexes $\mathcal{C}$ in $\mathbb{R}^n$ -- dropping the requirement that all cells of $\mathcal{C}$ be pointed.  

To begin, we establish the \emph{Pointedness Dichotomy} (Corollary \ref{c:PolyDecompAllPointedOrUn}): {\em If $\mathcal{C}$ is a connected polyhedral complex imbedded in $\mathbb{R}^n$, then either every cell of $\mathcal{C}$ is pointed or every cell of $\mathcal{C}$ is unpointed}.   This will allow us to associate to {\em any} connected polyhedral complex $\mathcal{C}$ a homotopy-equivalent {\em pointed} connected polyhedral complex, which we call its \emph{essentialization}. We can then apply the results of Subsection \ref{s:polytopalcpxpointed} to the essentialization of $\mathcal{C}$ to obtain a homotopy-equivalent compact model for $\mathcal{C}$.

\subsubsection{Unpointed polyhedral complexes and their essentializations} For a hyperplane arrangement $\mathcal{A}$ in $\mathbb{R}^n$, we will call the linear space spanned by the normal vectors of $\mathcal{A}$ the \emph{normal span} of $\mathcal{A}$ and denote it by $\textrm{n.span}(\mathcal{A})$. The \emph{rank} of a hyperplane arrangement is defined as the dimension of $\textrm{n.span}(\mathcal{A})$.  

 Every polyhedral set $P \subset \mathbb{R}^n$ admits a \emph{irredundant realization} as the intersection of finitely many closed half-spaces, $P =H_1^+ \cap \ldots H_m^+$, such that $P$ is not equal to the intersection of any proper subset of $\{H_1^+,\ldots,H_m^+\}$ (\cite[Sec. 26]{Grunbaum}).  If $P$ is $n$-dimensional, this irredundant realization is unique: the boundaries of these half-spaces are the affine hulls of the $(n-1)$-dimensional faces of $P$.  
 
\begin{definition} \label{d:normalspan}
Let $P \subset \mathbb{R}^n$ be a polyhedral set (of arbitrary dimension).  \begin{enumerate} 
\item Define the \emph{normal span} of $P$, denoted $\textrm{n.span}(P)$, to be the linear space spanned by the set of all vectors $v$ that are normal to some hyperplane $H$ such that $H \cap P = \emptyset$.  
\item Define the \emph{rank} of $P$, denoted $\textrm{rank}(P)$, to be the dimension of $\textrm{n.span}(P)$
\end{enumerate}
\end{definition}
\color{black}

The following well-known lemma asserts that any redundant inequality in a system of linear inequalities with non-empty solution set is a non-negative linear combination of the other linear inequalities in the system:

\begin{lemma}(Farkas' Lemma III,  \cite[Prop. 1.9]{Ziegler})
Let $A \in \mathbb{R}^{m \times d}$, $z \in \mathbb{R}^m$, $a_0 \in \mathbb{R}^d$, $z_0 \in \mathbb{R}$.  
Then $a_0^T x \leq z_0$ is valid for all $x \in \mathbb{R}^d$ with $Ax \leq z$ if and only if at least one of the following hold:
\begin{enumerate}
\item \label{i:Farkasnonempty} there exists a row vector $c \geq \vec{0}$ such that $cA = a_0^T$ and $cz \leq z_0$ 
\item \label{i:Farkasempty} there exists a row vector $c \geq \vec{0}$ such that $cA = \vec{0}$ and $cz < 0$ 
\end{enumerate}
\end{lemma}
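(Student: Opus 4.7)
The plan is to prove this via the classical Farkas Lemma (a more fundamental version, e.g., the solvability alternative for $\{x : Ax \le z\}$), handling separately the cases where the feasible set $\{x : Ax \le z\}$ is empty or nonempty. The ($\Leftarrow$) direction is straightforward bookkeeping. If (i) holds, then for any $x$ satisfying $Ax \le z$ we have $a_0^T x = c(Ax) \le cz \le z_0$, using $c \ge \vec{0}$ to preserve the inequality. If (ii) holds, then $Ax \le z$ must be infeasible (otherwise $0 = c(Ax) \le cz < 0$ is a contradiction), so the implication $Ax \le z \Rightarrow a_0^T x \le z_0$ holds vacuously.

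For the ($\Rightarrow$) direction, assume $a_0^T x \le z_0$ holds whenever $Ax \le z$. First, if the system $Ax \le z$ is infeasible, I would invoke the Farkas alternative for inhomogeneous inequality systems, which asserts that infeasibility of $Ax \le z$ is equivalent to the existence of $c \ge \vec{0}$ with $cA = \vec{0}$ and $cz < 0$; this immediately yields condition (ii). Second, if the system is feasible, consider the linear program $\max\{a_0^T x : Ax \le z\}$. By hypothesis its optimal value is bounded above by $z_0$, so by LP strong duality the dual $\min\{cz : cA = a_0^T,\ c \ge \vec{0}\}$ is feasible and attains a value $\le z_0$; an optimal dual solution $c$ witnesses condition (i).

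The main obstacle is the dependence on classical Farkas / LP strong duality, both of which ultimately rest on the closedness of finitely generated convex cones in $\mathbb{R}^n$. Once that foundational result is granted, the argument reduces to routine symbol-pushing: one can alternatively package both subcases into a single application by appending the strict inequality $-a_0^T x < -z_0$ to $Ax \le z$ and invoking the infeasibility version. If no Farkas-type alternative is available as a black box, I would prove one first, either by a separating hyperplane argument applied to the cone $\{A^T c : c \ge \vec{0}\}$ or by induction on the number of inequalities via Fourier--Motzkin elimination; establishing that cone's closedness (and the ensuing alternative theorem) would constitute the bulk of the technical work.
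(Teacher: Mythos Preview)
Your argument is correct and follows a standard route to this version of Farkas' Lemma. Note, however, that the paper does not supply its own proof of this statement: it is quoted as \cite[Prop.~1.9]{Ziegler} and used as a black box. So there is no ``paper's proof'' to compare against; the authors simply invoke it.

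On its own merits your proof is fine. The $(\Leftarrow)$ direction is correct as written. For $(\Rightarrow)$, the case split into infeasible/feasible and the appeal to the Farkas alternative (for infeasibility) and LP strong duality (for the feasible case) are the standard maneuvers, and both are valid. Your remark that one can unify the two cases by appending $-a_0^T x < -z_0$ and applying a single infeasibility alternative (for mixed strict/weak systems, i.e.\ Motzkin's transposition theorem) is also correct and is essentially how Ziegler organizes the proof. The only caveat is circularity: LP strong duality is often \emph{derived} from Farkas' Lemma, so if you want a self-contained proof you should take the separating-hyperplane or Fourier--Motzkin route you mention rather than invoking duality.
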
 

\begin{remark}  All inequalities in Farkas' Lemma III should be interpreted coordinate-wise. Condition \eqref{i:Farkasnonempty} is the statement that the inequality $a_0^T x \leq z_0$ is redundant and can be expressed as a positive linear combination of the other inequalities in the system $Ax \leq z$; condition \eqref{i:Farkasempty} implies that the polyhedral set determined by $Ax \leq z$ is empty.
\end{remark} 

\begin{corollary} \label{c:topdimnspan}
Let $P$ be a polyhedral set of dimension $n$ in $\mathbb{R}^n$.  Then $\textrm{n.span}(P) = \textrm{n.span}(\mathcal{A})$ where $\mathcal{A}$ is the hyperplane arrangement $$\mathcal{A} \coloneqq \{\textrm{aff}(F) \mid F \textrm{ is an }(n-1)\textrm{-cell of } P\}.$$ 
\end{corollary}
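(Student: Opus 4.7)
The plan is to prove the two inclusions separately, using the fact (noted just before Definition~\ref{d:normalspan}) that an $n$-dimensional polyhedral set $P\subset \mathbb{R}^n$ has a \emph{unique} irredundant realization $P=H_1^+\cap\cdots\cap H_m^+$ whose bounding hyperplanes $H_i$ are exactly the affine hulls of the $(n-1)$-dimensional faces of $P$. Write $H_i^+=\{a_i^T x\le z_i\}$, collect the $a_i^T$ into a matrix $A$ and the $z_i$ into a vector $z$, so that $\mathcal{A}=\{H_1,\ldots,H_m\}$ and $\textrm{n.span}(\mathcal{A})=\textrm{span}\{a_1,\ldots,a_m\}$.

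For the inclusion $\textrm{n.span}(\mathcal{A})\subseteq\textrm{n.span}(P)$: any $H\in\mathcal{A}$ is the affine hull of a facet $F$ of $P$, hence is a supporting hyperplane. Writing $H=\{v^T x=c\}$ with $P\subseteq\{v^T x\ge c\}$, the parallel translate $H_\varepsilon:=\{v^T x=c-\varepsilon\}$ is disjoint from $P$ for any $\varepsilon>0$ and has the same normal vector $v$. Hence every generator of $\textrm{n.span}(\mathcal{A})$ is normal to a hyperplane disjoint from $P$, so lies in $\textrm{n.span}(P)$.

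For the inclusion $\textrm{n.span}(P)\subseteq\textrm{n.span}(\mathcal{A})$: let $H$ be any hyperplane disjoint from $P$ with normal $v$, and write $H=\{v^T x=c\}$. Since $P$ is convex and $H\cap P=\emptyset$, $P$ lies strictly on one side of $H$; after possibly replacing $v$ with $-v$, we may assume $P\subseteq\{v^T x>c\}$. Thus the linear inequality $-v^T x\le -c$ is valid for every $x$ with $Ax\le z$, so Farkas' Lemma III applies to $a_0^T=-v^T$ and $z_0=-c$. Because $P$ is nonempty, alternative (ii) fails, so alternative (i) produces a row vector $\alpha\ge\vec{0}$ with $\alpha A=-v^T$. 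This exhibits $v$ as a linear combination of the $a_i$, so $v\in\textrm{n.span}(\mathcal{A})$. Since such normals generate $\textrm{n.span}(P)$, the inclusion follows.

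The only real subtlety is the appeal to the uniqueness of the irredundant realization to identify $\mathcal{A}$ with the rows of $A$; without top-dimensionality of $P$ this identification can fail and the cleanness of the Farkas argument is lost. Once that identification is in place, Farkas' Lemma III does essentially all the work of the second inclusion, and the first inclusion is just the observation that a supporting hyperplane and a nearby parallel translate share a normal.
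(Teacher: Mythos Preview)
Your proof is correct and follows essentially the same approach as the paper's. The paper's own proof is a terse two-sentence sketch (``parallel translate a facet's affine hull away from $P$'' for one inclusion, ``Farkas' Lemma III gives the reverse containment'' for the other); you have simply written out the details of how Farkas' Lemma is invoked, including the observation that nonemptiness of $P$ rules out alternative~(ii).
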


\begin{proof} If $F$ is an $(n-1)$-face of $P$, then parallel translating $\textrm{aff}(F)$ along the normal vector to the hyperplane $\textrm{aff}(F)$ in the direction ``away'' from $P$ yields a hyperplane disjoint from $P$.  Hence $\textrm{n.span}(\mathcal{A}) \subset \textrm{n.span}(P)$.  Farkas' Lemma III gives the reverse containment, $\textrm{n.span}(\mathcal{A}) \supset \textrm{n.span}(P)$.
\end{proof}

\begin{lemma} \label{l:linspvsnspan} Let $P$ be a polyhedral set, and let $F$ be a proper face of $P$. Then $\textrm{n.span}(P) = \textrm{n.span}(F)$.
\end{lemma}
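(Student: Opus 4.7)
The plan is to deduce the lemma from a cleaner general characterization of $\textrm{n.span}$: for any polyhedral set $Q \subset \mathbb{R}^n$,
\[
\textrm{n.span}(Q) = (\textrm{lin.space}(Q))^\perp.
\]
Once this identity is established, the lemma follows because $\textrm{lin.space}(P) = \textrm{lin.space}(F)$ for any nonempty face $F$ of $P$. Writing $P = \{x : Ax \leq b\}$ and $F = \{x \in P : A'x = b'\}$ where $A' x \leq b'$ is the subsystem of inequalities active on $F$, we have $\textrm{lin.space}(P) = \{y : Ay = 0\}$ and $\textrm{lin.space}(F) = \{y : Ay = 0,\ A'y = 0\} = \{y : Ay = 0\}$, since $A'$ is a submatrix of $A$. (The inclusion $\textrm{n.span}(P) \subseteq \textrm{n.span}(F)$ is immediate without the identity, since any hyperplane disjoint from $P$ is disjoint from its subset $F$; the nontrivial content is the reverse inclusion.)

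To prove the $(\subseteq)$ half of the characterization, take $v$ normal to a hyperplane $H = \{x : \langle v, x \rangle = c\}$ with $H \cap Q = \emptyset$. Then $\phi_v := \langle v, \cdot \rangle$ is bounded on one side over $Q$, so for any $\ell \in \textrm{lin.space}(Q)$ and $p \in Q$ the line $t \mapsto \phi_v(p + t\ell)$ stays on that side for all $t \in \mathbb{R}$, forcing $\phi_v(\ell) = 0$; since $\textrm{n.span}(Q)$ is the linear span of such $v$'s, we obtain $\textrm{n.span}(Q) \subseteq (\textrm{lin.space}(Q))^\perp$. The harder direction $(\supseteq)$ uses Theorem~\ref{t:polyhedraldecomposition} to decompose $Q = K + C$, where $K$ is the polytope given by the convex hull of points chosen from the minimal faces of $Q$ and $C := \textrm{char.cone}(Q)$, together with polar-cone duality for polyhedral cones: the polar $C^\circ := \{w : \langle w, y \rangle \leq 0\ \forall y \in C\}$ satisfies $\mathrm{span}(C^\circ) = (\textrm{lin.space}(C))^\perp = (\textrm{lin.space}(Q))^\perp$. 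Hence any $v \in (\textrm{lin.space}(Q))^\perp$ can be written as $v = w_1 - w_2$ with $w_1, w_2 \in C^\circ$. For each such $w_i$, $\phi_{w_i}$ is nonpositive on $C$ and bounded on the polytope $K$, hence bounded above on $Q = K + C$; consequently $w_i$ is normal to a hyperplane disjoint from $Q$, placing $v = w_1 - w_2 \in \textrm{n.span}(Q)$.

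The main obstacle will be invoking polar-cone duality rigorously within the paper's conventions. If this is undesirable, an alternative is to prove $\mathrm{span}(C^\circ) = (\textrm{lin.space}(C))^\perp$ directly from an irredundant inequality description $C = \{y : a_i \cdot y \leq 0\}$ of the characteristic cone, using Farkas' Lemma~III (already available in the paper) to identify $C^\circ$ with the finitely generated cone on the $a_i$'s, whose linear span is exactly the orthogonal complement of $\textrm{lin.space}(C) = \{y : a_i \cdot y = 0\}$.
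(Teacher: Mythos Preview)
Your proof is correct but takes a genuinely different route from the paper. The paper argues directly: writing $P = H_1^+ \cap \ldots \cap H_k^+$ with normals $v_i$, Farkas' Lemma gives $\textrm{n.span}(P) = \mathrm{span}\{v_1,\ldots,v_k\}$; a proper face $F = P \cap H'$ then has the realization $F = H_1^+ \cap \ldots \cap H_k^+ \cap (H')^+ \cap (H')^-$, and since $H'$ is a \emph{supporting} hyperplane of $P$, a small parallel translate of $H'$ misses $P$, so the normal $v$ to $H'$ already lies in $\textrm{n.span}(P)$. Hence $\textrm{n.span}(F) = \mathrm{span}\{v, v_1,\ldots,v_k\} = \textrm{n.span}(P)$.

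Your approach instead proves the general identity $\textrm{n.span}(Q) = (\textrm{lin.space}(Q))^\perp$ and reduces the lemma to the (easy) invariance of lineality space under passing to faces. This is more conceptual and in fact subsumes the later Lemma~\ref{l:npointed} (pointed $\Leftrightarrow$ rank $n$) as the special case $\textrm{lin.space}(P)=\{0\}$. The trade-off is that your $(\supseteq)$ direction invokes more machinery---the Minkowski decomposition $Q = K + C$ and the identification $\mathrm{span}(C^\circ) = (\textrm{lin.space}(C))^\perp$---whereas the paper's proof uses only Farkas and the definition of a supporting hyperplane. Both are short; the paper's argument is more self-contained within its existing toolkit, while yours isolates the structural reason the result holds and gives a reusable characterization of $\textrm{n.span}$.
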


\begin{proof} Let $P = H_1^+ \cap \ldots \cap H_k^+$ be a realization of $P$ as an intersection of half-spaces and, for each $i$, let $\vec{v}_i$ be a vector normal to $H_i$. Farkas' Lemma then tells us that $\textrm{n.span}(P) = \mbox{span}\{\vec{v}_1, \ldots, \vec{v}_k\}$.

Note that is immediate from Definition \ref{d:normalspan} that $\textrm{n.span}(P) \subseteq \textrm{n.span}(F)$. To see that $\textrm{n.span}(F) \subseteq \textrm{n.span}(P)$, note that $F = P \cap H'$ for $H'$ a supporting hyperplane of $P$ containing $\textrm{aff}(F)$, and therefore \[F = (H_1^+ \cap \ldots \cap H_k^+) \cap (H')^+ \cap (H')^-\] is a realization of $F$ as an intersection of half-spaces. 

Let $\pm \vec{v}$ be a nonzero vector normal to $H'$ in the direction of $(H')^\pm$. Since $H'$ is a supporting hyperplane of $P$, at least one of $H'\pm \vec{v}$ is disjoint from $P$, so $v \in \textrm{n.span}(P)$.  We then conclude that
 $$\textrm{n.span}(F) = \mbox{span}\{v, v_1, \ldots, v_k\} = \mbox{span}\{v_1, \ldots, v_k\} = \textrm{n.span}(P),$$ as desired.
\end{proof}

\begin{lemma}
Let $\mathcal{C}$ be a polyhedral complex in $\mathbb{R}^n$, and let $P_1, P_2$ be cells of $\mathcal{C}$.   If $P_1$ and $P_2$ share a common face $F$, then $\textrm{n.span}(P_1) = \textrm{n.span}(P_2)$. 
\end{lemma}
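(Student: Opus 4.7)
The proof should follow almost immediately from Lemma~\ref{l:linspvsnspan}, which establishes that $\textrm{n.span}(P) = \textrm{n.span}(F)$ whenever $F$ is a proper face of a polyhedral set $P$. The only work is to handle the case analysis arising from whether the common face $F$ is proper in $P_1$ and in $P_2$, or coincides with one of them.

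My plan is as follows. First I would observe that in a polyhedral complex, the common face of $P_1$ and $P_2$ (if nonempty) is a single mutual face $F$ of both cells, by the definition of a polyhedral complex. If $F = P_1$ and $F = P_2$, then $P_1 = P_2$ and the equality $\textrm{n.span}(P_1) = \textrm{n.span}(P_2)$ is trivial. If $F$ is a proper face of both $P_1$ and $P_2$, I would apply Lemma~\ref{l:linspvsnspan} twice to obtain
\[
\textrm{n.span}(P_1) \;=\; \textrm{n.span}(F) \;=\; \textrm{n.span}(P_2).
\]

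It remains to handle the mixed case, e.g.\ $F = P_1$ and $F$ is a proper face of $P_2$ (the other case is symmetric). Here $P_1$ itself is a proper face of $P_2$, so Lemma~\ref{l:linspvsnspan} applied to the pair $(P_2, P_1)$ gives $\textrm{n.span}(P_2) = \textrm{n.span}(P_1)$, completing the argument.

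There is no real obstacle here; the work is contained in Lemma~\ref{l:linspvsnspan}. The only thing to be careful about is the convention that a polyhedral set is itself a (non-proper) face of itself, so one cannot directly apply Lemma~\ref{l:linspvsnspan} in the degenerate case $F = P_i$; this is why the case split above is needed, but each case reduces either to triviality or to a single application of the lemma.
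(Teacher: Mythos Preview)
Your proof is correct and follows the same approach as the paper, which simply writes $\textrm{n.span}(P_1) = \textrm{n.span}(F) = \textrm{n.span}(P_2)$ by Lemma~\ref{l:linspvsnspan}. Your version is in fact more careful, since the paper silently glosses over the degenerate cases $F = P_1$ or $F = P_2$ that you handle explicitly.
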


\begin{proof}
By Lemma \ref{l:linspvsnspan}, $n.span(P_1) = n.span(F) = n.span(P_2)$.
\end{proof}
The following is now immediate:

\begin{lemma} \label{l:nspanconstant}
Let $\mathcal{C}$ be a connected polyhedral complex in $\mathbb{R}^n$.  Then $\textrm{n.span}(C)$ is independent of $C \in \mathcal{C}$. 
\end{lemma}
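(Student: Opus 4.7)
The plan is to deduce the lemma from the previous one (which handles the case of two cells sharing a common face) via a chain argument that exploits the connectivity of $|\mathcal{C}|$. Define a relation on the cells of $\mathcal{C}$ by declaring $C \sim C'$ whenever there is a finite sequence of cells $C = C_0, C_1, \ldots, C_k = C'$ in $\mathcal{C}$ such that $C_i \cap C_{i+1}$ is a nonempty (hence common) face for every $i$. Reflexivity and symmetry are immediate, and transitivity follows by concatenating chains, so $\sim$ is an equivalence relation. By the previous lemma applied to each consecutive pair in a chain, $\textrm{n.span}$ is constant on every equivalence class.

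Next I would verify that the equivalence classes partition $|\mathcal{C}|$ into a finite disjoint union of closed subsets. For each class $[C]$, set $|[C]| := \bigcup_{D \in [C]} |D|$. Since $\mathcal{C}$ is finite and each cell is closed in $\mathbb{R}^n$, the set $|[C]|$ is closed. The distinct classes give disjoint underlying sets: if $D \in [C]$ and $D' \in [C']$ met at a point, then by the polyhedral complex axiom $D \cap D'$ would be a common face of $D$ and $D'$, necessarily nonempty, so $D \sim D'$ and $[C] = [C']$.

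Since $|\mathcal{C}|$ is connected by hypothesis and decomposes as a disjoint union of finitely many closed subsets $|[C]|$, exactly one equivalence class exists. Combined with the constancy of $\textrm{n.span}$ on each class established in the first step, this yields $\textrm{n.span}(C) = \textrm{n.span}(C')$ for all $C, C' \in \mathcal{C}$.

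There is no serious obstacle in the argument; the only point requiring a moment of care is ensuring that the chain relation uses \emph{nonempty} common faces (so that the previous lemma is applicable) and that the polyhedral complex axiom then forces inter-class disjointness at the level of underlying sets rather than merely at the level of cells.
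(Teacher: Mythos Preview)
Your argument is correct and is exactly the standard connectedness argument the paper has in mind when it declares the lemma ``immediate'' from the preceding one: partition the cells into chain-equivalence classes, note that the previous lemma forces $\textrm{n.span}$ to be constant on each class, and use that distinct classes have disjoint closed underlying sets to conclude from connectedness of $|\mathcal{C}|$ that there is only one class. The paper simply omits these details.
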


In view of Lemma \ref{l:nspanconstant}, we can formulate the following definition:

\begin{definition}
For $\mathcal{C}$ a connected polyhedral complex in $\mathbb{R}^n$, define 
\begin{align*}
\textrm{n.span}(\mathcal{C}) & \coloneqq \textrm{n.span}(C)\\
\textrm{rank}(\mathcal{C}) & \coloneqq \textrm{rank}(C)\\
\end{align*}
 for any cell $C \in \mathcal{C}$.
 \end{definition}
 
 \begin{remark}
The conclusion of Lemma \ref{l:nspanconstant} may not hold if $\mathcal{C}$ is not connected.  For example, consider the polyhedral complex in $\mathbb{R}^2$ consisting of the infinite strip $C_1 \coloneqq [0,1] \times \mathbb{R}$ and the square $C_2 \coloneqq [2,3] \times [2,3]$, along with all the faces of $C_1$ and $C_2$. Then $\textrm{n.span}(C_1) = \mathbb{R} \times 0$ and $\textrm{n.span}(C_2) = \mathbb{R}^2$.  \end{remark}

\begin{lemma} \label{l:npointed}
Let $P \subset \mathbb{R}^n$ be a polyhedral set of dimension $n$. Then $P$ is pointed if and only if $\textrm{rank}(P) = n$. 
\end{lemma}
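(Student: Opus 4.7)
The plan is to use the unique irredundant realization of $P$ as an intersection of half-spaces to identify $\textrm{lin.space}(P)$ as the orthogonal complement of $\textrm{n.span}(P)$ inside $\mathbb{R}^n$; the biconditional then falls out of a one-line dimension count.

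First I would invoke the fact (recalled just before Definition~\ref{d:normalspan}) that, because $P$ is $n$-dimensional, there is a \emph{unique} irredundant realization $P = H_1^+ \cap \ldots \cap H_m^+$, where each $H_i^+ = \{x \in \mathbb{R}^n : \vec v_i \cdot x \geq b_i\}$ and the bounding hyperplanes $H_i$ are precisely the affine hulls of the $(n-1)$-cells of $P$. Combined with Corollary~\ref{c:topdimnspan} (and one application of Farkas' Lemma III to rule in all hyperplanes disjoint from $P$), this gives
\[
\textrm{n.span}(P) = \operatorname{span}\{\vec v_1, \ldots, \vec v_m\}.
\]

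Next I would compute the characteristic cone directly from the half-space description: a vector $y \in \mathbb{R}^n$ lies in $\textrm{char.cone}(P)$ iff $x+y \in P$ for every $x \in P$, and writing this out using $\vec v_i \cdot x \geq b_i$ shows (using that $\vec v_i \cdot x$ is unbounded above on $P$ but exactly attains $b_i$ on the corresponding facet) that this is equivalent to $\vec v_i \cdot y \geq 0$ for every $i$. Consequently
\[
\textrm{lin.space}(P) = \textrm{char.cone}(P) \cap -\textrm{char.cone}(P) = \{y \in \mathbb{R}^n : \vec v_i \cdot y = 0 \text{ for all } i\} = \textrm{n.span}(P)^{\perp}.
\]

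Finally, since $\dim \textrm{n.span}(P) + \dim \textrm{n.span}(P)^{\perp} = n$, I would conclude $\dim \textrm{lin.space}(P) = n - \textrm{rank}(P)$. Hence $\textrm{lin.space}(P) = \{0\}$, i.e.\ $P$ is pointed, if and only if $\textrm{rank}(P) = n$. The only step that requires any care is the identification of $\textrm{char.cone}(P)$ from the irredundant realization; everything else is linear-algebraic bookkeeping, so I do not anticipate a genuine obstacle here.
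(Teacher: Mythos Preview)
Your argument is correct and, in fact, more self-contained than the paper's. The paper proves the lemma by passing to the polar dual: after translating $P$ to contain the origin, it identifies $\textrm{rank}(P)$ with $\dim(P^*)$ and then invokes two results from Schrijver (Theorem~9.1 and Corollary~9.1a) to conclude that $P = P^{**}$ is pointed exactly when $P^*$ is full-dimensional. Your route avoids duality entirely: you use the irredundant half-space description (together with Corollary~\ref{c:topdimnspan}) to show directly that $\textrm{lin.space}(P) = \textrm{n.span}(P)^{\perp}$, from which the biconditional is a one-line dimension count. This is arguably cleaner for a reader not already fluent with polar duality, and it handles both directions in one stroke, whereas the paper's written proof is most naturally read as establishing only the ``if'' direction explicitly and leaning on an equivalence buried in the Schrijver citation for the converse.

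One small cosmetic point: in your parenthetical justification for $\textrm{char.cone}(P) = \{y : \vec v_i \cdot y \geq 0 \text{ for all } i\}$, the fact you actually need is that $\inf_{x \in P} \vec v_i \cdot x = b_i$, which follows from irredundancy (the $i$th facet $H_i \cap P$ is nonempty). The remark that $\vec v_i \cdot x$ is ``unbounded above on $P$'' is neither needed nor always true (take $P$ a polytope); I would simply drop it.
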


\begin{proof} 
Without loss of generality, we may assume $P$ contains the origin.  
Denote the polar dual of $P$ by $P^*$:
$$P^*\coloneqq \{z \in \mathbb{R}^n \mid z^Tx \leq 1 \textrm{ for all }x \in P\}.$$
We have
$$n = \textrm{rank}(P) = \textrm{dim}(\textrm{n.span}(P)) = \textrm{dim}(P^*).$$
By Theorem 9.1 of \cite{Schrijver}, $P^*$ is a polyhedron and $P^{**} = P$.  
By Corollary 9.1a of \cite{Schrijver}, $P^{**}$ is pointed. 
\end{proof}

Lemma \ref{l:npointed} has the following immediate corollary:
\begin{corollary}[Pointedness Dichotomy] \label{c:PolyDecompAllPointedOrUn}
Let $\mathcal{C}$ be a connected polyhedral complex in $\mathbb{R}^n$.  If $\textrm{rank}(\mathcal{C}) = n$ then every cell of $\mathcal{C}$ is pointed; otherwise every cell of $\mathcal{C}$ is unpointed. 
\end{corollary}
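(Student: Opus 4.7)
The plan is to reduce the corollary to an arbitrary-dimension extension of Lemma~\ref{l:npointed}. By Lemma~\ref{l:nspanconstant}, every cell $C \in \mathcal{C}$ satisfies $\textrm{rank}(C) = \textrm{rank}(\mathcal{C})$. Hence it suffices to prove the following \emph{extension} of Lemma~\ref{l:npointed}: for any polyhedral set $P \subset \mathbb{R}^n$, of arbitrary dimension, $P$ is pointed if and only if $\textrm{rank}(P) = n$.

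To establish this extension, I would prove the identity
\[
\textrm{n.span}(P) \;=\; \textrm{lin.space}(P)^{\perp},
\]
from which $\textrm{rank}(P) + \dim \textrm{lin.space}(P) = n$, and so pointedness ($\textrm{lin.space}(P) = \{0\}$) is equivalent to $\textrm{rank}(P) = n$. The inclusion ``$\subseteq$'' is the easier direction: if $w$ is normal to a hyperplane disjoint from $P$, then $w \cdot x$ is bounded above or below on $P$, so for any $y \in \textrm{lin.space}(P)$ and $x \in P$ the affine-linear function $t \mapsto w \cdot (x + ty)$ is bounded on all of $\mathbb{R}$, forcing $w \cdot y = 0$. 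Hence $w \in \textrm{lin.space}(P)^{\perp}$, and taking spans yields $\textrm{n.span}(P) \subseteq \textrm{lin.space}(P)^{\perp}$.

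For the reverse inclusion, I fix an irredundant representation $P = H_1^+ \cap \cdots \cap H_k^+$ with $H_i = \{x : w_i \cdot x = b_i\}$. A direct computation using the fact that $\textrm{char.cone}(P) = \{y : w_i \cdot y \geq 0 \text{ for all } i\}$ gives $\textrm{lin.space}(P) = (\textrm{span}\{w_1, \ldots, w_k\})^{\perp}$, and hence $\textrm{lin.space}(P)^{\perp} = \textrm{span}\{w_1, \ldots, w_k\}$. It then suffices to show each $w_i$ lies in $\textrm{n.span}(P)$, and this is witnessed by the parallel hyperplane $\{x : w_i \cdot x = b_i - \epsilon\}$, which is disjoint from $P \subseteq H_i^+$ for every $\epsilon > 0$. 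Combining all of this, $\textrm{rank}(\mathcal{C}) = n$ iff every cell of $\mathcal{C}$ has rank $n$ iff every cell of $\mathcal{C}$ is pointed; otherwise every cell has rank strictly less than $n$, and hence is unpointed.

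The main obstacle is the ``$\supseteq$'' direction of the normal-span identity in the lower-dimensional case: when $P$ is not full-dimensional, the irredundant representation includes defining inequalities that pair up to cut $P$ down to $\textrm{aff}(P)$ (for example, the $x$-axis in $\mathbb{R}^2$ is cut out by $y \geq 0$ and $-y \geq 0$), and one must verify that both normals in such a pair are captured by the shifted-hyperplane construction — this is immediate from the argument above, but worth a sanity check.
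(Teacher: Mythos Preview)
Your argument is correct. The overall route matches the paper's: invoke Lemma~\ref{l:nspanconstant} to see that all cells share the same rank, then characterize pointedness in terms of rank. The paper simply declares the corollary ``immediate'' from Lemma~\ref{l:npointed} and gives no further proof, so your write-up actually supplies the missing step that Lemma~\ref{l:npointed} as stated only covers $n$-dimensional cells.

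The genuine difference is in how you extend Lemma~\ref{l:npointed} to arbitrary-dimensional cells. The paper's proof of Lemma~\ref{l:npointed} goes through polar duality and explicitly uses $\dim P = n$; to push it to lower-dimensional cells one would presumably argue that faces of pointed (resp.\ unpointed) sets are pointed (resp.\ unpointed), which still leaves the case of a complex with no $n$-cells. Your route is more direct and more general: the identity $\textrm{n.span}(P) = \textrm{lin.space}(P)^{\perp}$, established from any half-space description via $\textrm{lin.space}(P) = (\mathrm{span}\{w_i\})^{\perp}$ and the shifted-hyperplane witness, works uniformly for all $P$ and makes the equivalence ``pointed $\Leftrightarrow$ $\textrm{rank}(P)=n$'' a one-line consequence. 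This buys you a self-contained argument that does not depend on the complex containing top-dimensional cells. Your sanity check about paired inequalities in the non-full-dimensional case is fine: since $P \subseteq H_i^{+}$ for every defining half-space, the shift $\{w_i\cdot x = b_i - \epsilon\}$ is disjoint from $P$ regardless of whether $-w_i$ also appears among the normals.
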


 \begin{lemma} Let $\mathcal{C}$ be a $n$-dimensional, connected polyhedral complex in $\mathbb{R}^n$.  Let $\mathcal{A}$ be the hyperplane arrangement 
 $$\mathcal{A} \coloneqq \{\textrm{aff}(F) \mid F \textrm{ is an }(n-1)\textrm{-cell of } \mathcal{C}\}.$$ 
 Then $$\textrm{n.span}(\mathcal{A}) = \textrm{n.span}(\mathcal{C}).$$
 \end{lemma}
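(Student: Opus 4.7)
The plan is to prove the equality $\textrm{n.span}(\mathcal{A}) = \textrm{n.span}(\mathcal{C})$ by establishing the two inclusions separately, relying heavily on the preceding lemmas (especially Lemma \ref{l:nspanconstant}, Lemma \ref{l:linspvsnspan}, and Corollary \ref{c:topdimnspan}).

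For the inclusion $\textrm{n.span}(\mathcal{C}) \subseteq \textrm{n.span}(\mathcal{A})$: since $\mathcal{C}$ is $n$-dimensional, there exists at least one $n$-cell $C \in \mathcal{C}$. By Lemma \ref{l:nspanconstant}, $\textrm{n.span}(\mathcal{C}) = \textrm{n.span}(C)$. I would then apply Corollary \ref{c:topdimnspan} to $C$ (which is a top-dimensional polyhedral set in $\mathbb{R}^n$) to conclude that $\textrm{n.span}(C) = \textrm{n.span}(\mathcal{A}_C)$, where $\mathcal{A}_C$ is the arrangement of affine hulls of the $(n-1)$-faces of $C$. Since each $(n-1)$-face of $C$ is itself an $(n-1)$-cell of the complex $\mathcal{C}$, we have $\mathcal{A}_C \subseteq \mathcal{A}$, and therefore $\textrm{n.span}(\mathcal{A}_C) \subseteq \textrm{n.span}(\mathcal{A})$. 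Chaining these gives the desired inclusion.

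For the reverse inclusion $\textrm{n.span}(\mathcal{A}) \subseteq \textrm{n.span}(\mathcal{C})$: it suffices to show that for each $H \in \mathcal{A}$, the normal vector to $H$ lies in $\textrm{n.span}(\mathcal{C})$. Write $H = \textrm{aff}(F)$ for some $(n-1)$-cell $F$ of $\mathcal{C}$, and let $v$ be a unit vector normal to $H$. The key observation is that $F \subseteq H$, so for any $t \neq 0$ the parallel hyperplane $H + tv$ is disjoint from $H$ and hence disjoint from $F$. By Definition \ref{d:normalspan}, this means $v \in \textrm{n.span}(F)$. Applying Lemma \ref{l:nspanconstant} to the connected complex $\mathcal{C}$ yields $\textrm{n.span}(F) = \textrm{n.span}(\mathcal{C})$, so $v \in \textrm{n.span}(\mathcal{C})$, completing the inclusion.

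There is no real obstacle here: the statement is essentially a bookkeeping consequence of already-proved lemmas. The one small subtlety worth spelling out is the argument that the normal to the affine hull of a lower-dimensional cell $F$ belongs to $\textrm{n.span}(F)$, which is where one uses the fact that $F$ is contained in, rather than transverse to, its affine hull so that any nontrivial normal translate is automatically disjoint.
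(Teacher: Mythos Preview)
Your proof is correct and follows essentially the same approach as the paper's own proof: both directions use the same ingredients (Lemma~\ref{l:nspanconstant}, Corollary~\ref{c:topdimnspan}, and the observation that the normal to $\textrm{aff}(F)$ lies in $\textrm{n.span}(F)$) in the same way. Your write-up is in fact slightly more careful in justifying the last point, which the paper asserts without comment.
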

 
 \begin{proof}
 For any $(n-1)$-cell $F \in \mathcal{C}$, normal vectors to $\textrm{aff}(F)$ are contained in $\textrm{n.span}(F)$. By Lemma \ref{l:nspanconstant}, $ \textrm{n.span}(F) = \textrm{n.span}(\mathcal{C})$.  Hence $\textrm{n.span}(\mathcal{A}) \subset \textrm{n.span}(\mathcal{C})$. 
 
 For the converse, fix any $n$-cell $C \in \mathcal{C}$.  Then Lemma \ref{c:topdimnspan} implies
 $$\textrm{n.span}(\mathcal{C}) = \textrm{n.span}(C) = \textrm{rank}(\mathcal{H}),$$ where $\mathcal{H}$ is the hyperplane arrangement 
$$\mathcal{H} \coloneqq \{\textrm{aff}(D) \mid D \textrm{ is an }(n-1)\textrm{-face of } C\}.$$ 
Since $\mathcal{H} \subset \mathcal{A}$, this yields $\textrm{n.span}(\mathcal{C}) \subset \textrm{n.span}(\mathcal{A})$. 
 \end{proof}

\begin{remark} \label{rem:nonexample}
A decomposition like the one shown in Figure \ref{f:nonexample} is not a counterexample to Corollary \ref{c:PolyDecompAllPointedOrUn}, because it is \emph{not} a polyhedral decomposition of $\mathbb{R}^2$ (c.f. Section \ref{s:polybackground}). Specifically, $C_2 \cap C_3$ is not a face of $C_2$, as it cannot be written as $C_2 \cap H$ for a supporting hyperplane $H$ of $C_2$. 
Similarly, $C_2 \cap C_4$ is not a face of $C_2$.  

\begin{figure}[h!]
\begin{tikzpicture}[scale=1.5]
\tikzstyle{DoubleArrow} = [<->]
\tikzstyle{arrow} = [->,=stealth]
\draw [DoubleArrow] (0,-1) to (0,1);
\draw [DoubleArrow] (-.75,-1) to (-.75,1);
\draw [arrow] (0,0) to (1,0);
\filldraw  (0,0) circle (1pt);
\node at (-1,0) {$C_1$};
\node at (-.35,0) {$C_2$};
\node at (.45, .45) {$C_3$};
\node at (.45, -.45) {$C_4$};
\end{tikzpicture}
	\caption{The ``nonexample'' of polyhedral decomposition of $\mathbb{R}^2$ of Remark \ref{rem:nonexample}.}
	\label{f:nonexample}
\end{figure}
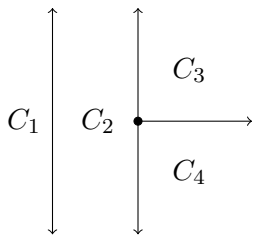
\end{remark}

Recall that the \emph{essentialization} (see e.g. \cite{Stanley}) of a hyperplane arrangement $\mathcal{A}$ in $\mathbb{R}^n$ is the hyperplane arrangement 
$$\textrm{ess}(\mathcal{A}) \coloneqq \{H \cap \textrm{n.span}(\mathcal{A}) \mid H \textrm{ is a hyperplane in } \mathcal{A}\}$$
in the linear space $\textrm{n.span}(\mathcal{A}) \subset \mathbb{R}^n$. 
Denoting by $\text{n.span}(\mathcal{A})^\perp$ the orthogonal complement of $\textrm{n.span}(\mathcal{A})$ in $\mathbb{R}^n$, it follows immediately from the definition that $H' \in \textrm{ess}(\mathcal{A})$ if and only if $H' \oplus \text{n.span}(\mathcal{A})^\perp \in \mathcal{A}$ (\cite{Stanley}).

We define the essentialization of an unpointed polyhedral decomposition of $\mathbb{R}^n$:
\begin{definition}
Let $\mathcal{C}$ be a connected polyhedral complex in $\mathbb{R}^n$.   Define the \emph{essentialization of $\mathcal{C}$} to be the family of sets
  $$\textrm{ess}(\mathcal{C}) \coloneqq \{C \cap \textrm{n.span}(\mathcal{C}) \mid C \in \mathcal{C}\}$$
\end{definition}

\begin{remark} An immediate consequence of Corollary \ref{c:PolyDecompAllPointedOrUn} is that every cell of $\textrm{ess}(\mathcal{C})$ is pointed.  Furthermore,  Corollary \ref{c:PolyDecompAllPointedOrUn} implies that if all cells of $\mathcal{C}$ are pointed, then $\textrm{ess}(\mathcal{C}) = \mathcal{C}$. \end{remark}

The reader may verify the following result:
\begin{lemma} For any connected polyhedral complex $\mathcal{C}$ in $\mathbb{R}^n$, the essentialization of $\mathcal{C}$, $\textrm{ess}(\mathcal{C})$, is a polyhedral complex and $|\textrm{ess}(\mathcal{C})| = \textrm{n.span}(\mathcal{C}) \cap |\mathcal{C}|$. 
\end{lemma}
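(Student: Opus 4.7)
The plan is to reduce both conclusions to a single structural observation: every cell $C \in \mathcal{C}$ admits a decomposition $C = (C \cap V) + V^{\perp}$, where $V := \textrm{n.span}(\mathcal{C})$, and under this decomposition the face $F \mapsto F \cap V$ is a bijection between faces of $C$ (in $\mathbb{R}^n$) and faces of $C \cap V$ (in $V$). Given this, verifying that $\textrm{ess}(\mathcal{C})$ is a polyhedral complex becomes a routine check, and the underlying-set equality is immediate.

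First, the equality $|\textrm{ess}(\mathcal{C})| = V \cap |\mathcal{C}|$ follows at once by distributing intersection over union: $\bigcup_{C \in \mathcal{C}}(C \cap V) = V \cap \bigcup_{C \in \mathcal{C}} C$. Next I would establish the structural decomposition. By Lemma~\ref{l:nspanconstant}, every cell $C \in \mathcal{C}$ has $\textrm{n.span}(C) = V$; combined with the standard fact that the lineality space of a polyhedral set is precisely the orthogonal complement of its normal span (which one reads off from an irredundant halfspace description exactly as in the proof of Lemma~\ref{l:linspvsnspan}), this yields $\textrm{lin.space}(C) = V^{\perp}$. Writing $C$ via Theorem~\ref{t:polyhedraldecomposition} as the Minkowski sum of its compact part, the cone over its minimal proper faces of $\textrm{char.cone}(C)$, and $\textrm{lin.space}(C) = V^{\perp}$, one sees that $C = (C \cap V) + V^{\perp}$. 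Faces of $C$ are cut out by supporting hyperplanes, and because $V^{\perp}$ lies in every such hyperplane (any linear functional defining a supporting hyperplane of $C$ vanishes on $\textrm{lin.space}(C) = V^{\perp}$), each face $F$ of $C$ likewise decomposes as $F = (F \cap V) + V^{\perp}$. This produces the claimed face-bijection, with inverse $G \mapsto G + V^{\perp}$; in particular, every face of $C \cap V$ has the form $F \cap V$ for some face $F$ of $C$.

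Finally, I would verify the polyhedral complex axioms for $\textrm{ess}(\mathcal{C})$. Each $C \cap V$ is a polyhedral set because polyhedral sets are closed under finite intersection and $V$ is a linear subspace. Closure under faces follows from the previous paragraph: any face of $C \cap V$ equals $F \cap V$ for some face $F$ of $C$, and since $F$ is itself a cell of $\mathcal{C}$, $F \cap V$ is a cell of $\textrm{ess}(\mathcal{C})$. For the intersection axiom, note that $(C_1 \cap V) \cap (C_2 \cap V) = (C_1 \cap C_2) \cap V$; since $C_1 \cap C_2$ is a common face of $C_1$ and $C_2$ in $\mathcal{C}$, applying the face-bijection to $C_1$ and to $C_2$ separately shows $(C_1 \cap C_2) \cap V$ is a common face of $C_1 \cap V$ and $C_2 \cap V$. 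The main obstacle is the rigor of the face-bijection -- specifically, checking that a supporting hyperplane of $C \cap V$ inside $V$ lifts (via orthogonal projection along $V^{\perp}$) to a supporting hyperplane of $C$ in $\mathbb{R}^n$, and conversely. The identification $\textrm{lin.space}(C) = V^{\perp}$ is doing the essential work here, and once this lift is in place the rest of the lemma is bookkeeping.
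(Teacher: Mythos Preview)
Your argument is correct. The paper actually provides no proof of this lemma at all, stating only ``The reader may verify the following result,'' so there is nothing to compare your approach against; the structural decomposition $C = (C \cap V) + V^{\perp}$ via the identification $\textrm{lin.space}(C) = V^{\perp}$ that you use is exactly the natural route, and your verification of the face-bijection and the two polyhedral-complex axioms is sound.
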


\begin{lemma} \label{l:essC}
Let $\mathcal{C}$ be a connected polyhedral complex in $\mathbb{R}^n$.
 Then there is a strong deformation retraction $r:|\mathcal{C}| \times [0,1] \to |\textrm{ess}(\mathcal{C})|$ such that 
   \begin{enumerate}
  \item \label{i:essdim} If $C$ is a $k$-cell of $\mathcal{C}$, $r(C)$ is a $(k-n+\textrm{rank}(\mathcal{C}))$-cell of $\textrm{ess}(\mathcal{C})$.
  \item The cellular map $\mathcal{C} \to \textrm{ess}(\mathcal{C})$ given by $C \mapsto r(C)$ is a poset-preserving bijection. 
  \item $r(C) \subset C$ for all $C \in \mathcal{C}$. 
  \end{enumerate}
\end{lemma}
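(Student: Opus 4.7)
The plan is to exhibit $r$ explicitly via orthogonal projection onto the normal span. Set $V := \textrm{n.span}(\mathcal{C})$, let $\pi: \mathbb{R}^n \to V$ be the orthogonal projection onto $V$, and define
\[ r(x,t) := (1-t)\,x + t\,\pi(x). \]
The central geometric claim I will need to establish first is that for every cell $C \in \mathcal{C}$, the orthogonal complement $V^\perp$ coincides with $\textrm{lin.space}(C)$. If $C = H_1^+ \cap \dots \cap H_m^+$ is an irredundant representation and $v_i$ is normal to $H_i$, then by Farkas' Lemma III the span of $\{v_1,\ldots,v_m\}$ equals $\textrm{n.span}(C)$, which equals $V$ by Lemma \ref{l:nspanconstant}; on the other hand $\textrm{lin.space}(C)$ is by construction the set of vectors perpendicular to every $v_i$, i.e.\ $V^\perp$. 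Granting this, for any $x \in C$ the displacement $\pi(x) - x$ lies in $V^\perp$, so the straight segment $\{(1-t)x + t\pi(x) : t \in [0,1]\}$ lies in $C$. This simultaneously shows that $r$ takes values in $|\mathcal{C}|$, proves property (iii), and shows $r(C \times \{1\}) = \pi(C) = C \cap V \in \textrm{ess}(\mathcal{C})$.

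From here the strong deformation retract properties drop out: $r$ is continuous since $\pi$ is linear; $r(\cdot,0)$ is the identity; $r(\cdot,1)$ lands in $|\textrm{ess}(\mathcal{C})|$ by the preceding paragraph; and for $y \in |\textrm{ess}(\mathcal{C})| \subseteq V$ we have $\pi(y) = y$, so $r(y,t) = y$ for every $t$.

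For property (i) I use the resulting product decomposition: since $V^\perp = \textrm{lin.space}(C)$ and $V \oplus V^\perp = \mathbb{R}^n$, we obtain $C = (C \cap V) + V^\perp$ with the sum direct in the affine sense, whence $\dim(C \cap V) = \dim(C) - \dim(V^\perp) = k - n + \textrm{rank}(\mathcal{C})$. For property (ii), well-definedness and surjectivity of $C \mapsto C \cap V$ are by definition of $\textrm{ess}(\mathcal{C})$; injectivity follows from $C_1 \cap V = C_2 \cap V \Rightarrow C_1 = (C_1 \cap V) + V^\perp = (C_2 \cap V) + V^\perp = C_2$.

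The one step requiring a bit of care is poset-preservation. If $F$ is a proper face of $C$, write $F = C \cap H$ for a supporting hyperplane $H$ of $C$. Because $F$ is itself a cell of $\mathcal{C}$ we have $\textrm{n.span}(F) = V$, and since the normal to $H$ lies in $\textrm{n.span}(F)$ (by Lemma \ref{l:linspvsnspan} applied to $F \subset H$), we conclude $H \supseteq V^\perp$ and hence $H = (H \cap V) \oplus V^\perp$ with $H \cap V$ a hyperplane in $V$. Then
\[ F \cap V = (C \cap V) \cap (H \cap V), \]
exhibiting $F \cap V$ as a face of $C \cap V$ inside $V$; the same argument run backwards (lifting faces of $C \cap V$ via $+V^\perp$) shows the correspondence is order-preserving in both directions. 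The main obstacle I expect is precisely this last point: keeping the supporting-hyperplane data of $C$ compatible with the ambient linear decomposition $\mathbb{R}^n = V \oplus V^\perp$, which is why I plan to invest the effort up front in pinning down $\textrm{lin.space}(C) = V^\perp$ via Farkas and Lemma \ref{l:linspvsnspan}. Once that identification is in hand, everything else is a bookkeeping exercise in the product structure $C = (C \cap V) + V^\perp$.
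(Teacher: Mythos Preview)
Your approach is exactly the paper's (which gives only a one-line sketch defining $r$ via the straight-line homotopy to the orthogonal projection onto $\textrm{n.span}(\mathcal{C})$); you fill in all the details the paper omits. One minor slip: in the poset-preservation step, Lemma~\ref{l:linspvsnspan} does not apply to ``$F \subset H$'' since $F$ is not a face of the hyperplane $H$ --- but your conclusion is still correct, since $H$ supports $C$ and hence a parallel translate of $H$ is disjoint from $C$, so its normal lies in $\textrm{n.span}(C) = V$ directly from Definition~\ref{d:normalspan}.
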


\begin{proof}[Proof sketch] 
Let $\pi:\mathbb{R}^n \to \textrm{n.span}(\mathcal{C})$ denote orthogonal projection onto $\textrm{n.span}(\mathcal{A})$.  Then the linear contraction $r(x,t) \coloneqq xt+\pi(x)(1-t)$ has the desired properties.
\end{proof}

We are now ready to define the canonical polytopal complex associated to a polyhedral complex imbedded in $\mathbb{R}^n$. 

\begin{definition} \label{d:canonicalpolytopalcpx}
Let $\mathcal{C}$ be a connected polyhedral complex in $\mathbb{R}^n$. The \emph{canonical polytopal complex} $\mathcal{K}(\mathcal{C})$ associated to $\mathcal{C}$ is  the polytopal complex
$$\mathcal{K}(\mathcal{C}) \coloneqq \textrm{com.part}(\textrm{ess}(\mathcal{C})).$$ 
\end{definition}

\begin{remark} 
If every cell in $\mathcal{C}$ is pointed, $\textrm{ess}(\mathcal{C}) = \mathcal{C}$, so the canonical polytopal complex $\mathcal{K}(\mathcal{C})$ is just  $\textrm{com.part}(\mathcal{C})$ from Definition \ref{def:compactpart}.
\end{remark}

\begin{theorem} \label{t:strengthenedVersionDeformationRetract}
Let $\mathcal{C}$ be a connected polyhedral complex in $\mathbb{R}^n$.  Then there is a strong deformation retraction $H:|\mathcal{C}| \to |\mathcal{K}(\mathcal{C})|$ that induces a surjective cellular map from $\mathcal{C}$ to the canonical polytopal complex $\mathcal{K}(\mathcal{C})$ that preserves the face poset relation.  
\end{theorem}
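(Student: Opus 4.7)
The plan is to compose the two strong deformation retractions we have already built. By Lemma~\ref{l:essC}, there is a strong deformation retraction
$$r: |\mathcal{C}| \times [0,1] \to |\mathcal{C}|$$
whose image at time $1$ lies in $|\textrm{ess}(\mathcal{C})|$, together with an induced cellular map $\rho: \mathcal{C} \to \textrm{ess}(\mathcal{C})$ which is a poset-preserving bijection. By the Pointedness Dichotomy (Corollary~\ref{c:PolyDecompAllPointedOrUn}), every cell of $\textrm{ess}(\mathcal{C})$ is pointed, so Proposition~\ref{p:pointeddeformationretraction} provides a strong deformation retraction
$$\Phi: |\textrm{ess}(\mathcal{C})| \times [0,1] \to |\textrm{ess}(\mathcal{C})|$$
onto $|\textrm{com.part}(\textrm{ess}(\mathcal{C}))| = |\mathcal{K}(\mathcal{C})|$, together with an induced surjective cellular map $\underline{\textrm{base}}: \textrm{ess}(\mathcal{C}) \to \mathcal{K}(\mathcal{C})$ that respects the face poset.

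First I would define the concatenated homotopy $H: |\mathcal{C}| \times [0,1] \to |\mathcal{C}|$ by
$$H(x,t) = \begin{cases} r(x, 2t) & t \in [0, \tfrac{1}{2}], \\ \Phi\bigl(r(x,1),\, 2t-1\bigr) & t \in [\tfrac{1}{2}, 1]. \end{cases}$$
Continuity follows from the Gluing Lemma since both pieces agree at $t = \tfrac{1}{2}$, where each equals $r(x,1) \in |\textrm{ess}(\mathcal{C})|$ (so that $\Phi$ is applicable). The three defining properties of a strong deformation retraction onto $|\mathcal{K}(\mathcal{C})|$ are then routine to verify: $H(x,0) = r(x,0) = x$; $H(x,1) = \Phi(r(x,1), 1) \in |\mathcal{K}(\mathcal{C})|$; and for any $y \in |\mathcal{K}(\mathcal{C})| \subseteq |\textrm{ess}(\mathcal{C})|$, the retraction $r$ is the identity on $|\textrm{ess}(\mathcal{C})|$ while $\Phi$ fixes $y$ pointwise, so $H(y,t) = y$ for all $t$.

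Next I would verify the induced cellular statement. The composition $\underline{\textrm{base}} \circ \rho : \mathcal{C} \to \mathcal{K}(\mathcal{C})$ is a composition of cellular maps, hence cellular. It is surjective because $\rho$ is a bijection and $\underline{\textrm{base}}$ is surjective, and it preserves the face poset relation because both factors do (the first as a poset isomorphism, the second by Lemma~\ref{l:retractionpreservesfacets}, which forces base maps to send faces into faces). Finally, that this is the cellular map actually \emph{induced} by $H$ in the sense that $H(C \times \{1\}) \subseteq \underline{\textrm{base}}(\rho(C))$ follows from tracking the image of a cell $C \in \mathcal{C}$: by Lemma~\ref{l:essC}(iii) we have $r(C \times \{1\}) \subseteq \rho(C)$, and then by the corresponding property of $\Phi$ we get $\Phi(\rho(C) \times \{1\}) \subseteq \underline{\textrm{base}}(\rho(C))$.

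I do not anticipate a serious obstacle: the genuinely delicate work has already been done in Subsection~\ref{s:polytopalcpxpointed} (continuity of $\textrm{base}_{\mathcal{C}}$) and in establishing the Pointedness Dichotomy. The only technical care needed here is to ensure that the two homotopies glue continuously and that the face-poset/surjectivity properties of the two induced cellular maps compose correctly, both of which are straightforward.
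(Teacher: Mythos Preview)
Your proposal is correct and follows exactly the same strategy as the paper: compose the strong deformation retraction of Lemma~\ref{l:essC} onto $\textrm{ess}(\mathcal{C})$ with that of Proposition~\ref{p:pointeddeformationretraction} onto the compact part. The paper's proof is a two-sentence sketch of precisely this composition, whereas you have spelled out the homotopy concatenation and the verification of the cellular/poset properties in detail.
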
 

\begin{proof}
 Lemma \ref{l:essC} gives a strong deformation retraction from the arbitrary polyhedral complex $\mathcal{C}$ in $\mathbb{R}^n$ to the pointed polyhedral complex $\textrm{ess}(\mathcal{C})$ that induces a surjective cellular map that preserves the face poset relation.   Proposition \ref{p:pointeddeformationretraction} then applies to the strong deformation retraction of $\textrm{ess}(\mathcal{C})$ onto its compact part.
\end{proof}

\subsection{Homotopy equivalence of sublevel sets} \label{s:homequivsublevel}

\subsubsection{The pointed case}

Recall that the \emph{product complex} $\mathcal{K} \times \mathcal{L}$ of two polyhedral complexes $\mathcal{K}$ and $\mathcal{L}$ is the polyhedral complex consisting of all pairwise products of cells:
$$\mathcal{K} \times \mathcal{L} \coloneqq \{K \times L \mid K \in \mathcal{K}, L \in \mathcal{L}\}.$$  
Definition \ref{d:flevelpreservingisotopy} and Lemma \ref{l:keyGrunertLemma} involve a product complex in which one factor is the complex denoted $[a,b]$, which consists of the real interval $[a,b]$ together with its faces.  

\begin{definition} \cite[Defn. 4.4]{Grunert} \label{d:flevelpreservingisotopy}
Let $M$ be a polyhedral complex and $f:|M| \to \mathbb{R}$ linear on cells.  An \emph{$f$-level-preserving PL isotopy of level sets} between $M_{=a}$ and $M_{=b}$ is a PL homeomorphism 
$$\phi:|M|_{=h} \times [a,b] \to |M|_{\in [a,b]}$$ for $h \in [a,b]$ such that for every $t \in [a,b]$, the restriction of $\phi$ to $|M|_{=h} \times \{t\}$ is a PL homeomorphism between $|M|_{=h} \times \{t\}$ and $|M|_{=t}$. 
\end{definition}

\begin{definition} \label{d:combinatorialequivalence}
Two polyhedral complexes $K$ and $L$ are said to be \emph{combinatorially equivalent} if there is a bijection $\phi:K \to L$ of cells that preserves both dimension and the poset structure given by the face relation. That is:
\begin{enumerate}
\item For all $S, T \in K$, $S$ is a face of $T$ in $K$ if and only if $\phi(S)$ is a face of $\phi(T)$ in $L$ and
\item For all $S \in K$, $\textrm{dim}(S) = \textrm{dim}(\phi(S))$.
\end{enumerate}
\end{definition}

An upshot of Theorem~\ref{t:strengthenedVersionDeformationRetract} is that it enables us to leverage the following result of Grunert, which only applies to polytopal complexes, into a result for polyhedral complexes.  

\begin{lemma}\cite[Lem. 4.13]{Grunert} \label{l:keyGrunertLemma}
Let $M$ be a polytopal complex with a map $f:|M| \to \mathbb{R}$ linear on cells.  Suppose no vertex of $M$ has a value $f(v) \in [a,b]$ under $f$.  Then for any $t \in [a,b]$, the complexes $M_{=t} \times [a,b]$ and $M_{\in [a,b]}$ are combinatorially equivalent
(via the combinatorial equivalence $C_{=t} \times [a,b] \mapsto C_{\in [a,b]}$)
 and there is an $f$-level-preserving PL isotopy $\phi:|M|_{=t} \times [a,b] \to |M|_{\in [a,b]}$. 
\end{lemma}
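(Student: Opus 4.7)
The plan is to use the vertex-free-in-$[a,b]$ hypothesis to give each cell $C$ of $M$ that meets $f^{-1}([a,b])$ a canonical prism structure, then build the isotopy cellwise. First, partition the vertex set of each cell $C \in M$ as $\mathrm{Vert}(C) = V^-(C) \sqcup V^+(C)$, where $V^-(C) := \{v : f(v) < a\}$ and $V^+(C) := \{v : f(v) > b\}$. The cell $C$ meets $f^{-1}([a,b])$ exactly when both sets are nonempty, and in that case, since $f$ is affine on $C$, for every $s \in [a,b]$ the vertices of $C_{=s}$ are in canonical bijection with the edges $e$ of $C$ joining a vertex of $V^-(C)$ to one of $V^+(C)$: each such edge $e$ is linearly parametrized by $f|_e$, so there is a unique point $p_e(s) \in e$ with $f(p_e(s)) = s$.

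For the combinatorial equivalence, I would enumerate the cells of $M_{\in [a,b]}$ as the ``top'' and ``bottom'' pieces $C_{=a}$, $C_{=b}$ together with the ``prism cells'' $C_{\in [a,b]}$, taken over all $C \in M$ meeting $f^{-1}([a,b])$. The product complex $M_{=t} \times [a,b]$ has exactly this structure: cells $C_{=t} \times \{a\}$, $C_{=t} \times \{b\}$, side cells $D_{=t} \times [a,b]$ (one per proper face $D \leq C$ meeting $f^{-1}([a,b])$), and a top cell $C_{=t} \times [a,b]$. The assignment $C_{=t} \times [a,b] \mapsto C_{\in [a,b]}$, together with the natural maps on boundary pieces, is a poset-preserving bijection; dimension preservation follows because the transversality guaranteed by the vertex hypothesis makes $\dim C_{=t} = \dim C - 1$ whenever both $V^\pm(C)$ are nonempty.

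For the PL isotopy, I would first choose, on each cell $C_{=t}$ of $M_{=t}$, a simplicial subdivision that introduces no new vertices; such subdivisions exist by \cite[Fact 1.25]{Grunert} and can be chosen coherently with the face poset, for example via a placing/pulling triangulation starting from lowest-dimensional cells. The vertex correspondence $p_e(t) \leftrightrightarrow p_e(s)$ induces, for each $s \in [a,b]$, a combinatorially identical subdivision of $C_{=s}$. Now define
\[
\phi(p, s) := \sum_{i} \lambda_i \, p_{e_i}(s)
\]
whenever $p$ lies in the simplex $[p_{e_1}(t), \ldots, p_{e_k}(t)]$ of the chosen subdivision with barycentric coordinates $\lambda_i$. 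For each fixed $s$, this is a PL homeomorphism $|M|_{=t} \to |M|_{=s}$ that is affine on each simplex; the identity $f(\phi(p,s)) = s$ is immediate from $f(p_{e_i}(s)) = s$ and the affinity of $f$ on $C$, so $\phi$ is $f$-level-preserving.

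The main obstacle will be verifying that the simplicial subdivisions can be chosen in a way that is globally consistent: for $D$ a face of $C$, the restriction of the chosen subdivision of $C_{=t}$ to $D_{=t}$ must equal the chosen subdivision of $D_{=t}$, and the resulting $\phi$ must glue to a globally continuous map on $|M|_{=t}\times[a,b]$. Coherence of subdivisions is handled by induction on dimension, and the gluing of $\phi$ then reduces to the observation that the sliding rule $p_e(t) \mapsto p_e(s)$ depends only on the connecting edge $e$ itself, not on the ambient cell containing it; hence the cellwise formulas automatically agree on shared faces. Once this is settled, the combinatorial equivalence of the previous step confirms that $\phi$ is a global PL homeomorphism realizing the desired $f$-level-preserving isotopy between $|M|_{=t}\times[a,b]$ and $|M|_{\in[a,b]}$.
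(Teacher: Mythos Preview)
The paper does not prove this lemma; it is quoted verbatim from Grunert's thesis and used as a black box. So there is no ``paper's proof'' to compare against. That said, your outline of the combinatorial equivalence is correct and is presumably close to how Grunert argues: cells of $M_{\in[a,b]}$ are precisely $C_{=a}$, $C_{=b}$, $C_{\in[a,b]}$ for $C$ meeting the slab, and these match the product cells under $C_{=t}\times[a,b]\mapsto C_{\in[a,b]}$.

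There is, however, a genuine gap in your construction of the isotopy. Your formula
\[
\phi(p,s)=\sum_i \lambda_i\,p_{e_i}(s),\qquad p=\sum_i\lambda_i\,p_{e_i}(t),
\]
is \emph{not} piecewise affine in $(p,s)$. Since each $p_{e_i}(s)$ is affine in $s$, on a prism $\sigma\times[a,b]$ your $\phi$ takes the form $\phi(p,s)=p+(s-t)\,W(p)$ with $W$ affine; the cross term $(s-t)W(p)$ is genuinely bilinear, so $\phi$ is a degree-two polynomial map on each prism and cannot be made affine on any simplicial refinement. Thus $\phi$ is a homeomorphism (when it is one) but not a PL homeomorphism, which is what Definition~\ref{d:flevelpreservingisotopy} requires. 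Already for a single edge $\sigma=[p_{e_1}(t),p_{e_2}(t)]$ your map is the standard bilinear parametrization of the quadrilateral $[p_{e_1}(a),p_{e_2}(a),p_{e_2}(b),p_{e_1}(b)]$, which is PL only when the quadrilateral is a parallelogram.

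A related issue you should not gloss over: even for fixed $s$, you assert that the transported collection $\{[p_{e_1}(s),\dots,p_{e_k}(s)]\}$ is a triangulation of $|M|_{=s}$. Combinatorial equivalence of $C_{=t}$ and $C_{=s}$ does not by itself guarantee that a vertex-only triangulation of one transfers to a geometric triangulation of the other; some simplices could degenerate or overlap. One clean fix for both problems is to build the PL homeomorphism from the combinatorial equivalence using \emph{barycentric} (first derived) subdivisions of the two polytopal complexes $M_{=t}\times[a,b]$ and $M_{\in[a,b]}$: these subdivisions depend only on the face posets, so they are isomorphic simplicial complexes, and by choosing the inserted ``barycenters'' appropriately (e.g.\ taking the $f$-level of the barycenter of $C_{\in[a,b]}$ to be $(a+b)/2$, matching the $s$-coordinate on the product side) the induced simplicial isomorphism is $f$-level preserving. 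Alternatively, you can keep your edge-sliding idea but define $\phi$ simplicially: triangulate each prism $\sigma\times[a,b]$ by a staircase triangulation, set $\phi(p_e(t),a)=p_e(a)$ and $\phi(p_e(t),b)=p_e(b)$ on vertices, and extend affinely on simplices. That map \emph{is} PL and (since $f$ is affine on the ambient cell $C$) is $f$-level preserving; you then still owe an argument that the image simplices tile $|M|_{\in[a,b]}$.
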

\color{black}

We will use the following well-known result:

\begin{lemma}[\cite{GL}] \label{l:attainsmaxon0cells}
Let $F:\mathbb{R}^{n_0} \to \mathbb{R}$ be a neural network, and $\mathcal{C}(F)$ be its canonical polyhedral complex. If $\mathcal{P}$ is a cell of $\mathcal{C}(F)$ with at least one vertex as a face, and $F$ achieves a maximum (resp., minimum) on $\mathcal{P}$, then $F$ achieves a maximum (resp., minimum) at a vertex of $\mathcal{P}$.
\end{lemma}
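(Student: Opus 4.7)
The plan is to leverage the fact that $F$ is affine-linear on the cell $\mathcal{P}$ (by construction of $\mathcal{C}(F)$), together with the Minkowski decomposition (Theorem~\ref{t:polyhedraldecomposition}) of a pointed polyhedral set into a compact part plus its characteristic cone. Treating only the maximum case (the minimum case follows by replacing $F$ with $-F$), my first step will be to observe that since $\mathcal{P}$ has a $0$-dimensional face, $\mathcal{P}$ is pointed by Lemma~\ref{l:ptdequalsvertex}. Hence $\textrm{lin.space}(\mathcal{P}) = \{0\}$, and Theorem~\ref{t:polyhedraldecomposition} gives a decomposition
\[
\mathcal{P} = K_\mathcal{P} + \textrm{char.cone}(\mathcal{P}),
\]
where $K_\mathcal{P}$ is the convex hull of the vertices of $\mathcal{P}$.

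Next, suppose $F$ attains its maximum on $\mathcal{P}$ at $p^{*}$, and write $p^{*} = x^{*} + y^{*}$ with $x^{*} \in K_\mathcal{P}$ and $y^{*} \in \textrm{char.cone}(\mathcal{P})$. Let $L$ denote the linear part of $F|_{\textrm{aff}(\mathcal{P})}$ (so $F(u+v) - F(u) = L(v)$ whenever $u, u+v \in \textrm{aff}(\mathcal{P})$). For any $y \in \textrm{char.cone}(\mathcal{P})$ and $\lambda \geq 0$, the point $p^{*} + \lambda y$ lies in $\mathcal{P}$, and maximality forces $F(p^{*}) + \lambda L(y) \leq F(p^{*})$; letting $\lambda \to \infty$ shows $L(y) \leq 0$ for every $y \in \textrm{char.cone}(\mathcal{P})$. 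Applied to $y^{*}$, this gives $F(x^{*}) = F(p^{*}) - L(y^{*}) \geq F(p^{*})$, and since $x^{*} \in \mathcal{P}$, equality must hold. Thus the maximum is realized at the point $x^{*} \in K_\mathcal{P}$.

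Finally, I will invoke the standard fact that an affine-linear function on a bounded polytope attains its maximum at an extreme point, applied to $F|_{K_\mathcal{P}}$: the maximum is attained at some vertex $v$ of $K_\mathcal{P}$. Since $K_\mathcal{P}$ is the convex hull of the vertices of $\mathcal{P}$, every vertex of $K_\mathcal{P}$ is a vertex of $\mathcal{P}$, and the proof is complete. I do not expect any substantial obstacle here; the only subtlety is ensuring that the ``recession'' argument bounding $L$ on $\textrm{char.cone}(\mathcal{P})$ is carried out carefully, which is why I let $\lambda \to \infty$ rather than use a single test point.
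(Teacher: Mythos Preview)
Your proof is correct. The paper does not actually supply its own proof of this lemma; it is stated as a well-known fact and attributed to \cite{GL}. Your argument---reducing to the pointed case via Lemma~\ref{l:ptdequalsvertex}, using the Minkowski decomposition of Theorem~\ref{t:polyhedraldecomposition} to show the maximum can be pushed onto the compact part $K_{\mathcal{P}}$, and then invoking the standard extreme-point principle for affine functions on polytopes---is a clean, self-contained justification. The only minor remark is that in Theorem~\ref{t:polyhedraldecomposition} the cone summand is written as the cone on representatives of the minimal proper faces of $\textrm{char.cone}(\mathcal{P})$ rather than literally $\textrm{char.cone}(\mathcal{P})$; but when $\mathcal{P}$ is pointed these coincide, so your use of the decomposition $\mathcal{P} = K_{\mathcal{P}} + \textrm{char.cone}(\mathcal{P})$ is justified.
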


\begin{lemma} \label{l:epsilonNbhd}
Let $\mathcal{C}$ be a polyhedral complex in $\mathbb{R}^n$ of which every cell is pointed.   Let $F:|\mathcal{C}| \to \mathbb{R}$ be linear on cells of $\mathcal{C}$.  Let $[a,b]$ be an interval of transversal thresholds for $F$ on $\mathcal{C}$.  
Then there exists $\epsilon > 0$ such that for every cell $C \in \mathcal{C}$ for which $F(C) \cap [a,b]$ is nonempty, we have
$$F(C) \supseteq [a-\epsilon,b+\epsilon].$$ 
\end{lemma}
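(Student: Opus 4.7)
The plan is to exploit three features simultaneously: finiteness of $\mathcal{C}$, pointedness of every cell (so that each $C$ has at least one vertex as a face by Lemma \ref{l:ptdequalsvertex}), and transversality of $[a,b]$ (no flat cell has image meeting $[a,b]$).

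First I will argue that if $C \in \mathcal{C}$ satisfies $F(C) \cap [a,b] \neq \emptyset$, then $F$ cannot be constant on $C$: otherwise $C$ would be a flat cell with $F(C) = \{t\} \subseteq [a,b]$, contradicting the assumption that $[a,b]$ consists of transversal thresholds.

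Next, using pointedness of $C$ together with the Minkowski decomposition of Theorem \ref{t:polyhedraldecomposition}, I will write $C = K_C + \textrm{char.cone}(C)$, where $K_C$ is the convex hull of the vertices of $C$. Since $F|_C$ is affine-linear, $F(C)$ is a closed convex subset of $\mathbb{R}$, hence a closed interval, possibly unbounded. Letting $L$ denote the linear part of the affine extension of $F|_{\textrm{aff}(C)}$, I observe that $\sup F(C) = +\infty$ whenever $L$ takes a positive value on $\textrm{char.cone}(C)$; otherwise $L \leq 0$ on the cone and $\sup F(C) = \max F(K_C)$, which is attained at a vertex of $C$. A symmetric statement holds for $\inf F(C)$. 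Thus every finite endpoint of $F(C)$ is the $F$-value at some $0$-cell of $\mathcal{C}$.

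Because $0$-cells are flat and $[a,b]$ consists of transversal thresholds, these finite endpoints lie outside $[a,b]$. Combining this with $F(C) \cap [a,b] \neq \emptyset$ forces strictly $\inf F(C) < a$ and $\sup F(C) > b$, with $\pm \infty$ allowed. Consequently the quantity
\[
\epsilon_C := \min\bigl(a - \inf F(C),\; \sup F(C) - b\bigr)
\]
is strictly positive (adopting the convention that subtracting $-\infty$ or adding $+\infty$ yields $+\infty$), and $F(C) \supseteq [a - \epsilon_C, b + \epsilon_C]$. Since $\mathcal{C}$ is finite, only finitely many cells have $F(C) \cap [a,b] \neq \emptyset$, and I take $\epsilon := \min_C \epsilon_C$ over those cells to obtain the desired uniform $\epsilon > 0$. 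I anticipate no serious obstacle beyond carefully organizing the case analysis for which endpoints of $F(C)$ are finite, since the delicate point is that unbounded pointed cells can still have one finite endpoint when $L$ is semidefinite (but not zero) on $\textrm{char.cone}(C)$.
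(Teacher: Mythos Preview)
Your proof is correct and follows essentially the same approach as the paper's: reduce to a single cell by finiteness, show that any finite endpoint of $F(C)$ is attained at a vertex, and then use transversality to force that endpoint outside $[a,b]$. The only cosmetic difference is that the paper cites Lemma~\ref{l:attainsmaxon0cells} directly for the ``extrema at vertices'' step, whereas you re-derive it via the Minkowski decomposition $C = K_C + \textrm{char.cone}(C)$ and an analysis of the linear part $L$ on the cone; your initial observation that $C$ cannot be flat is also correct but not strictly needed, since a flat $C$ with image in $[a,b]$ would already be ruled out by the vertex argument.
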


\begin{proof}
Since $\mathcal{C}$ has only finitely many cells and $F$, it suffices to show the result for each cell.  
So fix any cell $C\in \mathcal{C}$ such that $F(C) \cap [a,b] \neq \emptyset$.  By continuity of $F$, it suffices to show there exist points $w, v \in C$ such that $F(w) < a$ and $F(v) > b$.  
 
 If $F$ is unbounded above, such a point $v$ clearly exists.  So suppose $F$ is bounded above (on $C$).  Since $F$ is linear on $C$, $F$ achieves its maximum, which we denote $m$.  By assumption, $F$ attains a value $\geq a$ somewhere on $C$, so $m \geq a$. By Lemma \ref{l:attainsmaxon0cells}, $F$ attains $m$ on a $0$-cell of $C$.  Hence $m$ is not a transversal threshold.  Thus $m > b$.  
 
 A similar argument shows the existence of $w \in C$ such that $F(w) < a$. 
\end{proof}

Recall that for a real-valued function $F$, a map $b$ is said to be \emph{$F$-level preserving} if $F(x) = F(b(x))$ for all $x$. 

\begin{lemma} \label{l:FLevelPresDefRetract}
Let $\mathcal{C}$ be a polyhedral complex in $\mathbb{R}^n$ in which all cells are pointed.   Let $F:|\mathcal{C}| \to \mathbb{R}$ be linear on cells.  Let $[a,b]$ be an interval of transversal thresholds for $F$ on $\mathcal{C}$.  Fix $\epsilon > 0$ as in Lemma \ref{l:epsilonNbhd}. Let $I_{a-\epsilon,b+\epsilon}$ be the polyhedral decomposition of $\mathbb{R}$ whose $1$-cells are $(-\infty,a-\epsilon], [a-\epsilon,b+\epsilon]$ and $[b+\epsilon,\infty)$.  
Let $\mathcal{C}'$ be the refinement of $\mathcal{C}$ obtained as the following level set complex:
$$\mathcal{C}' = \mathcal{C}_{F \in I_{a-\epsilon,b+\epsilon}}.$$
Then the following hold:
\begin{enumerate}
\item \label{i:strongdef} The surjective map $\textrm{base}_{\mathcal{C}'}:|\mathcal{C}'| \to |\textrm{com.part}(\mathcal{C}')|$ is induced by a strong deformation retraction.
\item \label{i:stripFlevelPres} The restriction of $\textrm{base}_{\mathcal{C}'}$ to the set 
$$F_{[a,b]} \coloneqq \{x \in |\mathcal{C}| : a \leq F(x) \leq b \}$$ is $F$-level preserving.  Equivalently, for any $a\leq c \leq b$,  
$$\textrm{base}_{\mathcal{C}'}(F_{=c}) = |\textrm{com.part}(\mathcal{C}')_{F = c}|.$$
\item \label{i:trichotomy}  
 \begin{align*}
 \textrm{base}_{\mathcal{C}'}(F_{\leq a-\epsilon}) &= |\textrm{com.part}(\mathcal{C}')_{F \leq a-\epsilon}|, \\ 
 \textrm{base}_{\mathcal{C}'}(F_{\geqq b+\epsilon}) &= |\textrm{com.part}(\mathcal{C}')_{F \geq b+\epsilon}|,\\
  \textrm{base}_{\mathcal{C}'}(F_{[a-\epsilon,a)}) &\subseteq  |\textrm{com.part}(\mathcal{C}')_{F \in [a-\epsilon,a)}|, \\
    \textrm{base}_{\mathcal{C}'}(F_{(b,b+\epsilon]}) &\subseteq  |\textrm{com.part}(\mathcal{C}')_{F \in (b,b+\epsilon]}|, \\
  \textrm{base}_{\mathcal{C}'}(F_{\leq a}) &= |\textrm{com.part}(\mathcal{C}')_{F\leq a}|, \\
  \textrm{base}_{\mathcal{C}'}(F_{\geq b}) &= |\textrm{com.part}(\mathcal{C}')_{F\geq b}|. \\
  \end{align*}
  \item \label{i:subsuplevel} For any $a \leq c \leq b$, 
  \begin{align*}
 \textrm{base}_{\mathcal{C}'} (F_{=c}) & = |\textrm{com.part}(\mathcal{C}')_{F =c}|, \\
  \textrm{base}_{\mathcal{C}'} (F_{\leq c}) & = |\textrm{com.part}(\mathcal{C}')_{F \leq c}|,\\
 \textrm{base}_{\mathcal{C}'} (F_{\geq c}) & = |\textrm{com.part}(\mathcal{C}')_{F \geq c}|.\\
\end{align*}
\end{enumerate}
\end{lemma}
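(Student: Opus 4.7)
The plan is first to observe that $\mathcal{C}'$ is itself a polyhedral complex all of whose cells are pointed: each cell of $\mathcal{C}'$ is a subset of a cell of $\mathcal{C}$, and lineality spaces are monotone under inclusion of polyhedral sets. Part (i) is then immediate from Proposition~\ref{p:deformationretract} applied to $\mathcal{C}'$. The crux for parts (ii)--(iv) is the following observation: \emph{if $C'$ is a cell of $\mathcal{C}'$ with $F(C') \subseteq [a-\epsilon, b+\epsilon]$, then every $v \in \textrm{char.cone}(C')$ lies in the kernel of the linear part of $F|_{C'}$}. Indeed, for any $p \in C'$ the ray $\{p+tv : t \geq 0\}$ lies in $C'$, and boundedness of $F(C')$ in $\mathbb{R}$ forces the linear part $F'$ of $F|_{C'}$ to vanish on $v$. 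Since $\textrm{base}_{C'}(p) = p + \delta_{C'}(p)\,v_p$ for some $v_p \in -\textrm{char.cone}(C')$, the restriction of $\textrm{base}_{C'}$ to such a cell is $F$-level preserving.

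For (ii), any $p$ with $F(p) \in [a,b] \subseteq (a-\epsilon,b+\epsilon)$ lies in the relative interior of a unique minimal cell $M$ of $\mathcal{C}'$. By the level set construction, $F(M)$ is contained in one of the cells of $I_{a-\epsilon,b+\epsilon}$, and the only one that can contain a value in $(a-\epsilon,b+\epsilon)$ is $[a-\epsilon,b+\epsilon]$, so $F(M) \subseteq [a-\epsilon,b+\epsilon]$. Applying Lemma~\ref{l:facebyface} and the observation above, $\textrm{base}_{\mathcal{C}'}(p) = \textrm{base}_M(p)$ preserves $F(p)$. The equivalence with the second formulation then follows because the image of $\textrm{base}_{\mathcal{C}'}$ is $|\textrm{com.part}(\mathcal{C}')|$, and this set is pointwise fixed by $\textrm{base}_{\mathcal{C}'}$.

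For (iii) and (iv) I would proceed by case analysis. The ``outer'' equalities involving $F_{\leq a-\epsilon}$ and $F_{\geq b+\epsilon}$ follow because for such a point $p$ the minimal cell $M \in \mathcal{C}'$ has $F(M) \subseteq (-\infty, a-\epsilon]$ or $[b+\epsilon,\infty)$ respectively, and $\textrm{base}_M(p) \in M$ keeps the $F$-value in the same interval; the reverse inclusions hold because points of $|\textrm{com.part}(\mathcal{C}')|$ are fixed. The subtlety at the transition levels is handled by noting that if $F(p) = a-\epsilon$ then the minimal cell of $\mathcal{C}'$ containing $p$ must be flat at level $\{a-\epsilon\}$ (since $a-\epsilon$ is an endpoint of every allowed image interval of a cell in $\mathcal{C}'$, while $p$ is interior to its minimal cell and hence cannot lie in a proper affine slice), and such a flat slice is a genuine cell of $\mathcal{C}'$ by the level set construction; $\textrm{base}$ trivially preserves $F$-value on flat cells. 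Decomposing $F_{\leq c}$ as $F_{\leq a-\epsilon} \cup F_{[a-\epsilon, c]}$ (and $F_{\geq c}$ analogously) for $c \in [a,b]$ and applying the $F$-level-preserving property on the middle stratum yields (iv); the remaining equalities in (iii) correspond to the endpoint cases $c = a$ and $c = b$. The main obstacle will be the careful bookkeeping at the transition levels $a-\epsilon$ and $b+\epsilon$, where one must use that the level set complex is constructed precisely to separate these strata so that $\textrm{base}_{\mathcal{C}'}$ cleanly respects the sublevel and superlevel decompositions.
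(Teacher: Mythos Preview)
Your proposal is correct and follows essentially the same approach as the paper. Both arguments hinge on the identical key observation that on any cell $C'\in\mathcal{C}'$ with $F(C')\subseteq[a-\epsilon,b+\epsilon]$, every direction in $\textrm{char.cone}(C')$ lies in the kernel of the linear part of $F|_{C'}$, so $\textrm{base}_{C'}$ is $F$-level preserving there; the remaining parts are obtained by the same cell-by-cell case analysis using $\textrm{base}_{\mathcal{C}'}(D)\subseteq D$. A minor cosmetic difference is that the paper invokes Lemma~\ref{l:epsilonNbhd} to get the equality $F(C')=[a-\epsilon,b+\epsilon]$ for cells meeting $F_{[a,b]}$, whereas you only use the containment $F(M)\subseteq[a-\epsilon,b+\epsilon]$, which already suffices.
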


\begin{proof} 

Note that \eqref{i:strongdef} holds by Proposition \ref{p:pointeddeformationretraction}.

 Consider any cell $C' \in \mathcal{C}'$ that has nonempty intersection with $F_{[a,b]}$.  By our choice of $\epsilon$, we must have $F(C') = [a-\epsilon,b+\epsilon]$.  By construction, there exists a cell $C \in \mathcal{C}$ such that 
$$C' = C \cap F_{[a-\epsilon,b+\epsilon]}.$$
So $F(C) \supseteq [a-\epsilon,b+\epsilon]$.
Thus, the level sets $F_{=a-\epsilon}$ and $F_{=b-\epsilon}$ both have nonempty intersection with $C$.  Furthermore, the sets
$F_{=a-\epsilon} \cap C$ and $F_{=b-\epsilon} \cap C$ are faces of $C'$. 

We claim that $\textrm{char.cone}(C')$ consists of directions along with $F$ is constant.  Indeed, consider any point $p \in C'$ and any vector $v \in \textrm{char.cone}(C')$.  Then the point $p+tv \in C'$ for all $t \geq 0$.  By affine-linearity of $F$, $F(p + tv)= F(p) + t\frac{dF}{dv}$.  Thus, if the derivative $\frac{dF}{dv} < 0$, $C'$ must contain a point where $F$ attains a value less than $a-\epsilon /2$, which is impossible.  Similarly, if $\frac{dF}{dv} > 0$,  $C'$ must contain a point where $F$ attains a value greater than $a- \tfrac{\epsilon}{2}$, which is also impossible. Hence $\frac{dF}{dv} = 0$, and the claim follows. 

The map $\textrm{base}_{C'}$ acts on $C'$ by collapsing rays in $\textrm{char.cone}(C')$.  Since $F$ is constant along each ray in $\textrm{char.cone}(C')$, it follows that $\textrm{base}_{C'}$ is $F$-level preserving. 
Since $\textrm{base}_{\mathcal{C}'}$ is defined cell-by-cell and agrees on shared faces, this implies that $\textrm{base}_{\mathcal{C}'}$ is $F$-level preserving on $F_{[a,b]}$, establishing \eqref{i:stripFlevelPres}.

Next, note that by construction of $\mathcal{C}'$, each cell $D \in \mathcal{C}'$ satisfies at least one of the following:
\begin{enumerate}
\item $F(D) \subseteq (-\infty,a-\epsilon]$, 
\item \label{i:aeps} $F(D) \subseteq [a-\epsilon,a)$
\item $F(D) \subseteq [b+\epsilon,\infty)$.
\item  $F(D) \subseteq (b,b+\epsilon]$,
\item \label{i:specialtype} $F(D) = [a-\epsilon,b+\epsilon]$.
\end{enumerate}
The argument above shows that $\textrm{base}_{\mathcal{C}'}$ is $F$-level preserving on cells $D$ of type \eqref{i:specialtype}. The map $\textrm{base}_{\mathcal{C}'}$ is not necessarily $F$-level preserving on cells of the other types, but we do have the containment  $\textrm{base}_{\mathcal{C}'}(D) \subseteq D$ for every cell $D$. Hence:
 \begin{align*}
 \textrm{base}_{\mathcal{C}'}(F_{\leq a-\epsilon}) &= |\textrm{com.part}(\mathcal{C}')_{F \leq a-\epsilon}|, \\ 
 \textrm{base}_{\mathcal{C}'}(F_{\geqq b+\epsilon}) &= |\textrm{com.part}(\mathcal{C}')_{F \geq b+\epsilon}|,\\
  \textrm{base}_{\mathcal{C}'}(F_{[a-\epsilon,a)}) &\subseteq  |\textrm{com.part}(\mathcal{C}')_{F \in [a-\epsilon,a)}|, \\
    \textrm{base}_{\mathcal{C}'}(F_{(b,b+\epsilon]}) &\subseteq  |\textrm{com.part}(\mathcal{C}')_{F \in (b,b+\epsilon]}|.
  \end{align*}
  
Consequently, for any point $p$ in the closed set $ |\textrm{com.part}(\mathcal{C}')_{F \in [a-\epsilon,a]}|$, if $x \in |\mathcal{C}'|$ satisfies $\textrm{base}_{\mathcal{C}}(x) = p$, then at least one of the following holds:
 $x$ belongs to a cell of type \eqref{i:aeps}; $x$ belongs to a cell of type \eqref{i:specialtype} and $F(x) = F(p)$.  Since $\textrm{base}_{\mathcal{C}'}: |\mathcal{C}'| \to |\textrm{com.part}(\mathcal{C}')|$ is surjective, it follows that 
 $$\textrm{base}_{\mathcal{C}'} (F_{\leq a}) = |\textrm{com.part}(\mathcal{C}')_{F \leq a}|.$$
 An analogous argument yields 
  $$\textrm{base}_{\mathcal{C}'} (F_{\geq b}) = |\textrm{com.part}(\mathcal{C}')_{F \geq b}|,$$
establishing \eqref{i:trichotomy}. 
  
  Conclusion \eqref{i:subsuplevel} follows immediately from \eqref{i:stripFlevelPres} and \eqref{i:trichotomy} together.

\end{proof}

\begin{lemma} \label{l:transThreshPersist}
Let $\mathcal{C}, F, a, b, \epsilon, \mathcal{C}'$ be as in Lemma \ref{l:FLevelPresDefRetract}.  Then $[a,b]$ is an interval of transversal thresholds for $F$ with respect to the complex $\textrm{com.part}(\mathcal{C}')$. 
\end{lemma}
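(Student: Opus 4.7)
The plan is to show directly that no flat cell of $\textrm{com.part}(\mathcal{C}')$ has its constant $F$-value in $[a,b]$; by unwinding the definition of transversal threshold (a threshold is transversal if and only if it is not the image of a flat cell), this is exactly the desired conclusion.

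First, I would reduce the problem to analyzing the vertex set of $\mathcal{C}'$. By Definition \ref{def:compactpart}, every cell $D$ of $\textrm{com.part}(\mathcal{C}')$ is a face of $K_{C'} = \textrm{conv.hull}(\{0\text{-cells of }C'\})$ for some $C' \in \mathcal{C}'$, so the vertices of $D$ form a subset of the vertices of $\mathcal{C}'$. Since every nonempty polytope has at least one vertex and $F$ is affine-linear on $D$, if $D$ is flat with constant value $v$, then $v = F(w)$ for some vertex $w$ of $\mathcal{C}'$. It therefore suffices to show that no vertex of $\mathcal{C}'$ has $F$-value in $[a,b]$.

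Next, I would classify the vertices of the refinement $\mathcal{C}' = \mathcal{C}_{F \in I_{a-\epsilon,b+\epsilon}}$. Each cell of $\mathcal{C}'$ has the form $C \cap F^{-1}(J)$ for some $C \in \mathcal{C}$ and some cell $J$ of $I_{a-\epsilon,b+\epsilon}$. A vertex $p$ of such a cell is either (i) a vertex of $C$, or (ii) a new point arising from the refinement, in which case polyhedral geometry together with the affine-linearity of $F|_C$ forces $p \in C \cap F^{-1}(\partial J) \subseteq F^{-1}(\{a-\epsilon, b+\epsilon\})$. In case (ii), $F(p) \in \{a-\epsilon, b+\epsilon\}$ lies outside $[a,b]$ because $\epsilon > 0$. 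In case (i), $p$ is a $0$-cell of $\mathcal{C}$, hence a flat cell of $\mathcal{C}$; the hypothesis that $[a,b]$ is an interval of transversal thresholds for $F$ on $\mathcal{C}$ means no flat cell of $\mathcal{C}$ has image in $[a,b]$, so $F(p) \notin [a,b]$.

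Combining, every vertex $w$ of $\mathcal{C}'$ satisfies $F(w) \notin [a,b]$, and by the first reduction no flat cell of $\textrm{com.part}(\mathcal{C}')$ can have constant value in $[a,b]$. The main subtlety, which I expect to be the principal step to state carefully, is the vertex classification in case (ii): one must verify that any vertex of $C \cap F^{-1}(J)$ that is not already a vertex of $C$ must lie on $C \cap F^{-1}(\partial J)$. This follows from the affine-linearity of $F|_C$ and the standard fact that the new vertices of an intersection of polyhedra lie on the faces introduced by the new inequalities, but it deserves to be written out so that the reduction to the endpoint values $a-\epsilon$ and $b+\epsilon$ is unambiguous.
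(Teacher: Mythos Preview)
Your proposal is correct and follows essentially the same approach as the paper: both arguments reduce to showing that no $0$-cell of $\mathcal{C}'$ (equivalently, of $\textrm{com.part}(\mathcal{C}')$) has $F$-value in $[a,b]$, then invoke the fact that any flat cell of a polytopal complex contains a vertex. The paper handles the vertex analysis in one sentence (``$\mathcal{C}$ and $\mathcal{C}'$ only differ away from points of $F_{[a,b]}$''), whereas you spell out the dichotomy into old vertices of $\mathcal{C}$ versus new vertices on $F^{-1}(\{a-\epsilon,b+\epsilon\})$; this is the same content, just made more explicit.
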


\begin{proof}
First, note that $[a,b]$ is an interval of transversal thresholds for $F$ with respect to the complex $\mathcal{C}'$.  This is because the complexes $\mathcal{C}$ and $\mathcal{C}'$ only differ away from points of $F_{[a,b]}$.  Thus, $\mathcal{C}'$ has no flat cells where $F$ takes values in $[a,b]$.  Since $\mathcal{C}'$ and $\textrm{com.part}(\mathcal{C}')$ have the same set of $0$-cells, $F$ does not take a value in $[a,b]$ on any $0$-cell of $\textrm{com.part}(\mathcal{C}')$.  Because  $\textrm{com.part}(\mathcal{C}')$ is a polytopal complex and any nontransversal threshold for a polytopal complex is attained on a $0$-cell of that polytopal complex, it follows that all thresholds in $[a,b]$ are transversal for $F$ with respect to $\textrm{com.part}(\mathcal{C}')$. 
\end{proof}

\begin{proposition} \label{p:PtdHtpyEquiv}
Let $\mathcal{C}$ be a polyhedral complex in $\mathbb{R}^n$ in which all cells are pointed.   Let $F:|\mathcal{C}| \to \mathbb{R}$ be linear on cells of $\mathcal{C}$.  Let $[a,b]$ be an interval of transversal thresholds for $F$ on $\mathcal{C}$. 

Then there exists a polytopal complex $\mathcal{D}$ with $|\mathcal{D}| \subset |\mathcal{C}|$ and a strong deformation retraction $\Phi:|\mathcal{C}| \to |\mathcal{D}|$ (that induces a cellular map from $\mathcal{C}$ to $\mathcal{D}$) such that
$\Phi$ is $F$-level preserving on $|\mathcal{C}_{F \in [a,b]}|$ and for every $c \in [a,b]$,
\begin{equation} \label{eq:PhiProperties}
\Phi( |\mathcal{C}_{F \leq c}| ) = |\mathcal{D}_{F \leq c}|, \quad \Phi( |\mathcal{C}_{F \geq c}| ) = |\mathcal{D}_{F \geq c}|.
\end{equation}
Furthermore, there is an $F$-level preserving PL isotopy  
$$\phi: |\mathcal{D}|_{F = c} \times [a,b] \to |\mathcal{D}_{F \in [a,b]}|.$$
\end{proposition}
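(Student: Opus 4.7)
The plan is to assemble the proposition from the technical lemmas that have already been built up, principally Lemma \ref{l:FLevelPresDefRetract}, Lemma \ref{l:transThreshPersist}, and Lemma \ref{l:keyGrunertLemma}. The only piece of new work is checking that everything fits together; no fundamentally new construction is required.

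First I would apply Lemma \ref{l:epsilonNbhd} to choose $\epsilon > 0$, then build the refinement $\mathcal{C}' = \mathcal{C}_{F \in I_{a-\epsilon,b+\epsilon}}$ as in Lemma \ref{l:FLevelPresDefRetract}. Before invoking Proposition \ref{p:pointeddeformationretraction} on $\mathcal{C}'$, I must verify that every cell of $\mathcal{C}'$ is still pointed. This is straightforward: every cell of $\mathcal{C}'$ has the form $C \cap F^{-1}(J)$ for some $C \in \mathcal{C}$ and some face $J$ of the interval complex $I_{a-\epsilon,b+\epsilon}$, and the lineality space of an intersection is contained in the lineality space of each factor, so pointedness is preserved. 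I will then set $\mathcal{D} := \textrm{com.part}(\mathcal{C}')$, which is a polytopal complex by Lemma \ref{l:polytopalcpxiscpx}, and satisfies $|\mathcal{D}| \subseteq |\mathcal{C}'| = |\mathcal{C}|$.

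Next I would define $\Phi$ as the strong deformation retraction supplied by Proposition \ref{p:pointeddeformationretraction} applied to $\mathcal{C}'$, namely $\Phi(p,t) = t \cdot \textrm{base}_{\mathcal{C}'}(p) + (1-t) p$. Its terminal map is $\textrm{base}_{\mathcal{C}'}: |\mathcal{C}'| \to |\mathcal{D}|$, and Proposition \ref{p:pointeddeformationretraction} also supplies the induced surjective cellular map $\mathcal{C}' \to \mathcal{D}$; composing with the refinement cellular map $\mathcal{C} \to \mathcal{C}'$ (each cell $C$ mapping to itself, now viewed as subdivided) gives the required cellular map $\mathcal{C} \to \mathcal{D}$. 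The property that $\Phi$ is $F$-level preserving on $|\mathcal{C}_{F \in [a,b]}|$ is exactly Lemma \ref{l:FLevelPresDefRetract}(ii), noting that $F$-level preservation of $\textrm{base}_{\mathcal{C}'}$ on this set implies $F$-level preservation of the straight-line homotopy $\Phi(p,t)$ at every time $t$, since $\textrm{base}_{\mathcal{C}'}(p)$ and $p$ lie on a common affine-linear cell on which $F$ is constant along the connecting segment. The sublevel and superlevel identities of equation \eqref{eq:PhiProperties} come directly from Lemma \ref{l:FLevelPresDefRetract}(iv).

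Finally, for the PL isotopy: Lemma \ref{l:transThreshPersist} guarantees that $[a,b]$ remains an interval of transversal thresholds for $F$ on the polytopal complex $\mathcal{D}$. Since $\mathcal{D}$ is a polytopal complex (i.e.\ all cells are bounded), Grunert's Lemma \ref{l:keyGrunertLemma} applies directly and yields the required $F$-level-preserving PL isotopy
$$\phi: |\mathcal{D}|_{F = c} \times [a,b] \to |\mathcal{D}_{F \in [a,b]}|.$$
The only real subtlety in the whole argument is bookkeeping at the boundary of the slab $F^{-1}([a-\epsilon,a] \cup [b,b+\epsilon])$, where $\textrm{base}_{\mathcal{C}'}$ is not necessarily $F$-level preserving; this is precisely why Lemma \ref{l:FLevelPresDefRetract} distinguishes between strict containments (items (iii)) and equalities (item (iv)), and the surjectivity argument given in the proof of Lemma \ref{l:FLevelPresDefRetract} accounts for these cells. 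I therefore expect essentially no new obstacle here: all the heavy lifting has been done, and this proposition is the clean assembly of the preceding results.
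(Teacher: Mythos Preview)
Your proposal is correct and follows essentially the same approach as the paper: take the refinement $\mathcal{C}'$ from Lemma \ref{l:FLevelPresDefRetract}, set $\mathcal{D} = \textrm{com.part}(\mathcal{C}')$ and $\Phi = \textrm{base}_{\mathcal{C}'}$, then invoke Lemmas \ref{l:FLevelPresDefRetract}, \ref{l:transThreshPersist}, and \ref{l:keyGrunertLemma} in turn. Your write-up is in fact more careful than the paper's, since you explicitly check that $\mathcal{C}'$ inherits pointedness and spell out how the cellular map $\mathcal{C} \to \mathcal{D}$ arises.
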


\begin{proof}
Let $\mathcal{C}'$ be the refinement of $\mathcal{C}$ constructed in Lemma \ref{l:FLevelPresDefRetract}; set $\mathcal{D} = \textrm{com.part}(\mathcal{C}')$ and $\Phi = \textrm{base}_{\mathcal{C}'}$.  Lemma \ref{l:FLevelPresDefRetract} implies $\Phi$ has the stated properties. 
By Lemma \ref{l:transThreshPersist}, $[a,b]$ is an interval of transversal thresholds for $F$ with respect to the polytopal complex $\mathcal{D}$.  Grunert's result (Lemma \ref{l:keyGrunertLemma}) then gives, for any $c \in [a,b]$, the $F$-level preserving PL isotopy $\phi$. 
\end{proof}
\color{black}

\subsubsection{The unpointed case}

\begin{lemma} \label{l:UnptdToPtd}
Let $\mathcal{C}$ be a connected polyhedral complex in $\mathbb{R}^n$ in which all cells are unpointed, let $F:|\mathcal{C}| \to \mathbb{R}$ be linear on the cells of $\mathcal{C}$, and let $[a,b]$ be a closed interval of transversal thresholds for $F$ on $\mathcal{C}$. 
Then there exists a refinement $\mathcal{C}'$ of $\mathcal{C}$ such that 
\begin{enumerate}
\item the projection map $\textrm{proj}:\mathcal{C}' \rightarrow \textrm{ess}(\mathcal{C}')$ is $F$--level preserving and hence induces a canonical map $F_{\tiny \textrm{ess}}: \textrm{ess}(\mathcal{C}') \rightarrow \mathbb{R}$ which is linear on cells, 
\item  $[a,b]$ is an interval of transversal thresholds for $F_{\tiny \textrm{ess}}$ with respect to $\textrm{ess}(\mathcal{C}')$ 
\end{enumerate}
\end{lemma}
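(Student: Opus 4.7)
The plan is to construct $\mathcal{C}'$ by cutting $\mathcal{C}$ along two level sets of $F$ at values just outside $[a,b]$. Before the construction, I would establish a key structural observation: in any connected polyhedral complex $\mathcal{C}$ whose cells are all unpointed, with $F$ linear on cells, the $L$-component $\vec{w}_C^L$ of the gradient on each cell is the same across all cells, where $L := \textrm{n.span}(\mathcal{C})^\perp$. Indeed, when two cells $C_1, C_2 \in \mathcal{C}$ share a face $F_{12}$, continuity of $F$ at $F_{12}$ forces $(\vec{w}_{C_1} - \vec{w}_{C_2}) \perp T_{F_{12}}$ (the tangent space to the affine hull of $F_{12}$); Lemma~\ref{l:linspvsnspan} then tells us that $\textrm{lin.space}(F_{12}) = \textrm{lin.space}(C_1) = L \subseteq T_{F_{12}}$, forcing $\vec{w}_{C_1}^L = \vec{w}_{C_2}^L$. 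Chaining across the connected face structure produces a single vector $\vec{w}^L \in L$.

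The construction then splits into cases. If $\vec{w}^L = 0$, every $\vec{w}_C$ already lies in $\textrm{n.span}(\mathcal{C})$, so orthogonal projection to $\textrm{n.span}(\mathcal{C})$ is $F$-level preserving, and I set $\mathcal{C}' = \mathcal{C}$ (transversality transfers because flat cells of $\textrm{ess}(\mathcal{C})$ biject with flat cells of $\mathcal{C}$ with matching $F$-images). Otherwise, I pick $t_0 < a$ and $t_1 > b$ outside $[a,b]$ and outside the $F$-values of the existing flat cells, let $\mathcal{I}$ be the polyhedral decomposition of $\mathbb{R}$ with $0$-cells $\{t_0, t_1\}$, and define $\mathcal{C}' := \mathcal{C}_{F \in \mathcal{I}}$. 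By Theorem~\ref{t:polyhedraldecomposition}, each top-dimensional cell $C$ equals $C_0 + L$, and since $\vec{w}^L \neq 0$ the function $F$ is unbounded along $\vec{w}^L$ on $C$; thus $F(C) = \mathbb{R}$ and $C$ is cut at both $t_0$ and $t_1$, gaining new $(n-1)$-faces of normal $\vec{w}_C$.

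To verify (i), I would compute that each cut top-dimensional cell has lineality $L \cap (\vec{w}^L)^\perp$ (displacement along $\vec{w}^L$ changes $F$, so is no longer a lineality direction). By Lemma~\ref{l:nspanconstant} applied to the still-connected refinement $\mathcal{C}'$, every cell of $\mathcal{C}'$ shares this lineality, giving $\textrm{n.span}(\mathcal{C}') = \textrm{n.span}(\mathcal{C}) + \textrm{span}\{\vec{w}^L\}$. This subspace contains every $\vec{w}_C$, so orthogonal projection onto $\textrm{n.span}(\mathcal{C}')$ is $F$-level preserving cell-by-cell, inducing the canonical $F_{\textrm{ess}}$ on $\textrm{ess}(\mathcal{C}')$. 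For (ii), flat cells of $\mathcal{C}'$ correspond bijectively to flat cells of $\textrm{ess}(\mathcal{C}')$ with matching $F$-images, so it suffices to note that every flat cell of $\mathcal{C}'$ is either inherited from a flat cell of $\mathcal{C}$ (with $F$-image outside $[a,b]$ by hypothesis) or lies entirely in $F^{-1}(\{t_0\}) \cup F^{-1}(\{t_1\})$ (with $F$-image outside $[a,b]$ by choice). The main obstacle is establishing the preliminary constancy of $\vec{w}^L$: this is what makes a two-value cut in a single direction sufficient to simultaneously handle every cell; without it, one would need to refine by a family of level sets whose added normals jointly span the missing complement, with correspondingly more delicate lineality bookkeeping and verification of the Pointedness Dichotomy on the refinement.
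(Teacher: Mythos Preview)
Your argument is correct and takes a somewhat different route from the paper's. The paper proceeds by an abstract iterative refinement: as long as some level-set cut at a value $c$ increases $\textrm{rank}(\mathcal{C})$, replace $c$ by a nearby $c' \notin [a,b]$ and refine; repeat until rank stabilizes, then argue that rank stability forces every level set to be invariant under translation by $\textrm{lin.space}(\mathcal{C}')$, whence projection is $F$-level preserving. You instead isolate the structural obstruction up front --- the constancy of the $L$-component $\vec{w}^L$ of the gradient across all cells --- and then remove it in a single step. Your preliminary lemma is precisely what makes the paper's iteration terminate after one pass: once a single level-set cut adds the direction $\vec{w}^L$ to $\textrm{n.span}$, every $\vec{w}_C = \vec{w}_C^{\textrm{n.span}} + \vec{w}^L$ already lies in the enlarged normal span, so no further rank increase is possible. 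The paper's presentation allows for multiple iterations but never actually needs them; your argument explains why.

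Two minor remarks. First, a single cut at one value $t_0 \notin [a,b]$ (avoiding the finitely many $F$-images of existing flat cells) already suffices, since when $\vec{w}^L \neq 0$ every top-dimensional cell has $F$-image all of $\mathbb{R}$ and is therefore cut; your use of two cuts is harmless but not needed. Second, in verifying that the cut cell has lineality exactly $L \cap (\vec{w}^L)^\perp$, it is worth noting that for $v \in L$ one has $v \cdot \vec{w}_C = v \cdot \vec{w}^L$ (the $\textrm{n.span}$-component of $\vec{w}_C$ is orthogonal to $L$), so $L \cap \vec{w}_C^\perp = L \cap (\vec{w}^L)^\perp$ independently of the cell $C$ --- this is what lets Lemma~\ref{l:nspanconstant} apply cleanly to the refinement.
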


We will find the following notation convenient. Let $c_1, \ldots,  c_k \in \mathbb{R}$ be a set of finitely many distinct real numbers, and let $\mathbb{R}_{c_1, \ldots, c_k}$  be the unique induced polyhedral decomposition of $\mathbb{R}$ whose $0$--cells are $c_1, \ldots, c_k$ and whose $1$--cells are the $k+1$ closed intervals in $\mathbb{R}$ satisfying the condition that they contain at least one $c_i$ in their boundary and no $c_i$ in their interior. Then for a polyhedral complex $\mathcal{C}$ in $\mathbb{R}^n$ and a map $F: \mathcal{C} \rightarrow \mathbb{R}$ that is linear on cells, we will denote by $\mathcal{C}_{c_1 , \ldots , c_k}$ the level set complex associated to $F$ and $\mathbb{R}_{c_1 , \ldots , c_k}$.

\begin{proof} The Pointedness Dichotomy (Corollary \ref{c:PolyDecompAllPointedOrUn}) tells us that since all cells of $\mathcal{C}$ are unpointed, $\mbox{rank}(\mathcal{C}) < n$. 

We begin by noting that since $[a,b]$ is a closed interval of transversal thresholds, and the set of transversal thresholds is open in $\mathbb{R}$, there exists some $\epsilon > 0$ such that $(a-\epsilon, b + \epsilon)$ is an open interval of transversal thresholds. Moreover, $\mbox{rank}(\mathcal{C}_c) = \mbox{rank}(\mathcal{C}_{c'})$ for all $c, c' \in (a-\epsilon, b+\epsilon)$. 

The complex $\mathcal{C}'$ can now be constructed from $\mathcal{C}$ by taking finitely many refinements as follows. If there exists $c_1 \in \mathbb{R}$ for which $\mbox{rank}(\mathcal{C}_{c_1}) > \mbox{rank}(\mathcal{C})$, then there exists a threshold $c_1' \in (\mathbb{R} \setminus [a,b])$ for which 
$$\mbox{rank}(\mathcal{C}_{c_1'}) = \mbox{rank}(\mathcal{C}_{c_1}) > \mbox{rank}(\mathcal{C}).$$ If $\mbox{rank}(\mathcal{C}_{c_1'}) = n,$ then the Pointedness Dichotomy tells us that $\mathcal{C}_{c_1'}= \textrm{ess}(\mathcal{C}')$, and hence the induced map on $\mathcal{C}' :=  \mathcal{C}_{c_1'} = \textrm{ess}(\mathcal{C}')$ is trivially $F$--level preserving. Since, by construction, we have introduced no new cells mapping into the interval $[a,b]$, it remains an interval of transversal thresholds.

If $\mbox{rank}(\mathcal{C}_{c_1'})<n$, repeat the process in the paragraph above until we have found $c_1', \ldots, c_k' \in (\mathbb{R} \setminus [a,b])$ for which $\mathcal{C}_{c_1',\ldots , c_k'}$ has rank $m \leq n$ and there exists no $d \in \mathbb{R}$ for which $\mbox{rank}(\mathcal{C}_{c_1', \ldots, c_k',d}) > m$. Note that since each step in the process increases rank, we know that $k \leq (n- \mbox{rank}(\mathcal{C}))$ is finite.

We now claim that the finite refinement $\mathcal{C}' := \mathcal{C}_{c_1', \ldots, c_k'}$ satisfies the required conditions. If $m = n$, the argument is as above. If $m < n$, then $\mbox{rank}(\mathcal{C}'_d) = \mbox{rank}(\mathcal{C}')$ for every $d \in \mathbb{R}$. But this implies that every level set of $F$ is invariant under translation by vectors in $\mbox{lin.space}(\mathcal{C}')$, and hence the projection map $proj: \mathcal{C}' \rightarrow \textrm{ess}(\mathcal{C}')$ is $F$--level preserving. Since \color{purple} $proj$ \color{black} is bijective and $F$--level preserving on cells, the interval $[a,b]$ remains an interval of transversal thresholds for $F_{\textrm{ess}}: \textrm{ess}(\mathcal{C}') \rightarrow \mathbb{R}$. 
\end{proof}

We are now ready to prove Theorems \ref{t:retraction} and \ref{t:homotopyequiv}, which we restate below for convenience.

\begin{retractiontheorem} 
Let $\mathcal{C}$ be a polyhedral complex in $\mathbb{R}^n$, let $F:|\mathcal{C}| \to \mathbb{R}$ be linear on cells of $\mathcal{C}$, and  let $[a,b]$ be an interval of transversal thresholds for $F$ on $\mathcal{C}$.  Then there exists a polytopal complex $\mathcal{D}$ with $|\mathcal{D}| \subset |\mathcal{C}|$ and a strong deformation retraction $\Phi:|\mathcal{C}| \to |\mathcal{D}|$ (which induces a cellular map $\mathcal{C} \to \mathcal{D}$) such that
$\Phi$ is $F$-level preserving on $|\mathcal{C}_{F \in [a,b]}|$ and for every $c \in [a,b]$,
\begin{equation} \label{eq:PhiProperties}
\Phi( |\mathcal{C}_{F \leq c}| ) = |\mathcal{D}_{F \leq c}|, \quad \Phi( |\mathcal{C}_{F \geq c}| ) = |\mathcal{D}_{F \geq c}|.
\end{equation}
Furthermore, there is an $F$-level preserving PL isotopy  
$$\phi: |\mathcal{D}|_{F = c} \times [a,b] \to |\mathcal{D}_{F \in [a,b]}|.$$
\end{retractiontheorem}

\begin{proof}
It suffices to prove the result for each connected component of $\mathcal{C}$.  Thus, without loss of generality, assume $\mathcal{C}$ is connected. If $\mathcal{C}$ is pointed, the result is Proposition \ref{p:PtdHtpyEquiv}.  If $\mathcal{C}$ is unpointed, Lemma  \ref{l:UnptdToPtd} yields that the projection map $\textrm{proj}$ is a strong deformation retraction that is $F$-level preserving from $\mathcal{C}$ to the pointed polyhedral complex $\mathcal{D} \coloneqq \textrm{ess}(\mathcal{C}')$; it furthermore guarantees that $[a,b]$ is an interval of transversal thresholds for $F$ with respect to $\mathcal{D} = \textrm{ess}(\mathcal{C}')$.  We may therefore apply Proposition  \ref{p:PtdHtpyEquiv} to the action of $F$ on $\textrm{ess}(\mathcal{C}')$; let $\Phi$ be the composition of $\textrm{proj}$ and the strong deformation retraction guaranteed by Proposition  \ref{p:PtdHtpyEquiv}. 
\end{proof}

\begin{homotopyequivtheorem}
Let $\mathcal{C}$ be a polyhedral complex in $\mathbb{R}^n$, let $F:|\mathcal{C}| \to \mathbb{R}$ be linear on cells of $\mathcal{C}$, and let $[a,b]$ be an interval of transversal thresholds for $F$ on $\mathcal{C}$.  Then for any threshold $c \in [a,b]$, $|\mathcal{C}_{F \leq c}|$ is homotopy equivalent to $|\mathcal{C}_{F \leq a}|$; also $|\mathcal{C}_{F \geq c}|$ is homotopy equivalent to $|\mathcal{C}_{F \geq b}|$. \end{homotopyequivtheorem}

\begin{proof}
By Theorem \ref{t:retraction}, $|\mathcal{C}_{F \leq c}|$ is homotopy equivalent to $|\mathcal{D}_{F \leq c}|$ and $|\mathcal{C}_{F \leq a}|$ is homotopy equivalent to $|\mathcal{C}_{F \leq a}|$, where $\mathcal{D}$ is as constructed in that theorem.  It furthermore gives that  $|\mathcal{D}_{F \leq c}|$ is homotopy equivalent to $|\mathcal{D}_{F \leq a}|$.  Since homotopy equivalence is a transitive relation, this implies $|\mathcal{C}_{F \leq c}|$ is homotopy equivalent to $|\mathcal{C}_{F \leq a}|$.  The proof for the superlevel sets is the same. 

\end{proof}

\section{Local $H$--complexity can be arbitrarily large} \label{sec:LocCmpxArbLarge}

\color{black}
In this section, we present examples illustrating the phenomenon that the local $H$--complexity of a ReLU neural network can be arbitrarily large.  

We begin by establishing a few constraints on the $\nabla F$-orientations of the $1$--complex of $\mathcal{C}(F)$.
\color{black}
\begin{lemma}
	Let $F:\mathbb{R}^n \to \mathbb{R}$ be a ReLU neural network. Suppose $C$ is a flat, top-dimensional cell of $\mathcal{C}(F)$ and let $C'$ be a top-dimensional cell of $\mathcal{C}(F)$ such that $C \cap C'$ is a mutual facet $D$. Then any pair $E_1, E_2$ of edges of $C'$ with one vertex in $C$ satisfy the condition that $E_1$ and $E_2$ have the same induced orientation by $F$ (that is, they are both oriented outwards or both oriented inwards).  
\end{lemma}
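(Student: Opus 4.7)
The key observation will be that $F$, being affine-linear on $C'$ and constant on the shared facet $D \subseteq C$, realizes $D$ as a level hyperplane of $F$ restricted to $C'$. I will start by writing $F(x) = \vec{w} \cdot x + b$ for $x \in C'$, setting $c := F|_C$, and noting that $D \subseteq C$ forces $F \equiv c$ on $D$. If $\vec{w} = 0$ then $F$ is constant on $C'$, every edge of $C'$ is flat, and the conclusion holds vacuously; so I will assume $\vec{w} \neq \vec{0}$.

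Next, I will identify the affine hull of $D$ with the hyperplane $H := \{x : \vec{w}\cdot x + b = c\}$. Since $D$ is a facet of the $n$-cell $C'$, it has dimension $n-1$, and since $D$ lies in the $(n-1)$-dimensional affine subspace $H$, the two must coincide. In particular $\vec{w}$ is normal to $D$. Because $C'$ is convex with $D$ as a face on $H$, the hyperplane $H$ supports $C'$, so $C'$ lies entirely in one of the two closed halfspaces bounded by $H$. After possibly replacing $\vec{w}$ by $-\vec{w}$ for notational purposes, I may assume $F \geq c$ on all of $C'$.

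Now fix any edge $E$ of $C'$ with a vertex $v \in C$; then $v \in C \cap C' = D$, so $F(v) = c$. If $E \subseteq D$, then $F \equiv c$ on $E$, so $E$ is flat and carries no $\nabla F$-orientation, and such edges are excluded from the statement. Otherwise $E$ contains a point $p \in C' \setminus D$, and since $F$ is affine-linear on $E$ with $F \geq c$ on $C'$ and $F(v) = c$, the function $F$ is non-decreasing from $v$ toward $p$; being non-flat, it is strictly increasing. Hence the $\nabla F$-orientation of $E$ points outward from its vertex in $C$. Applying this uniformly to $E_1$ and $E_2$ yields that both are oriented outward. In the opposite sign case ($F \leq c$ on $C'$), the same argument shows both are oriented inward.

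I do not expect any substantive obstacle: once $D$ is recognized as a level hyperplane of $F|_{C'}$, the convexity of $C'$ forces $C'$ to lie on one side of $H$, and affine-linearity of $F$ along each edge does the rest. The only care required is to handle the degenerate cases ($\vec{w} = \vec{0}$, and edges of $C'$ that are entirely contained in $D$), where the claim is interpreted vacuously.
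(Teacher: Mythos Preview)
Your proof is correct and follows essentially the same approach as the paper: both arguments hinge on the observation that since $F$ is affine on $C'$ and constant on the facet $D$, the cell $C'$ lies entirely on one side of the level hyperplane $\{F=c\}$, so every edge of $C'$ leaving $D$ is oriented the same way. Your presentation makes this explicit via the supporting hyperplane $H$, whereas the paper reaches the same conclusion by a short contradiction using the intermediate value theorem, but the mathematical content is the same.
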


\begin{proof}
	Let $x_1$ and $x_2$ be points on $E_1$ and $E_2$ which are not vertices of $\mathcal{C}(F)$. Now, $x_1, x_2$ are points in $C'$ which are not in $C$. Suppose $F(x_1)>F(C)$. Then we must also have $F(x_2)>F(C)$. Supposing otherwise leads to a contradiction: If $F(x_2)=F(C)$ then $C'$ has a facet with $F(C')=F(C)$ and a point $x_2$ with $F(x_2)=F(C)$, so as $F$ is affine, $F(C')$ must be constant. This contradicts the assumption that $F(x_1)>F(C)$. Similarly, if $F(x_2)<F(C)$ then by the intermediate value theorem there is a point $x$ on the line segment between $x_1$ and $x_2$ satisfying $F(x)=F(C)$. As $C'$ is convex, $x \in C'$. As $F(C')$ is constant on $D$ and on a point $x$ not in $D$ then $F$ must be constant on $C'$, again a contradiction. This means that if $F(x_1)>F(C)$ then $F(x_2)>F(C)$ as well, and $E_1$ and $E_2$ have the same induced orientation.The same argument applies if $F(x_1)<F(C)$. 
	
	Lastly, if $F(x_1)=F(C)$, as seen before, $C'$ must also be a flat cell, and so both edges are assigned no orientation. \\
\end{proof}

\begin{lemma}
	
	Suppose $F_1: \mathbb{R}^n \to \mathbb R^m$, is a generic layer map which realizes an $n$-dimensional flat cell $C$. Let $\mathcal{E}$ be the set of equivalence classes of edges of $\mathcal{C}$ with one vertex in $C$, under the equivalence that $[E_1] = [E_2]$ if and only if $E_1$ and $E_2$ are both edges of the same $n$-cell $C'$ with $D=C'\cap C$ being a mutual facet. \\ 
	Let $s: \mathcal{E}\to \{-1,1\}$ be any function. Then it is possible to select an affine map $G_s: \mathbb{R}^m \to \mathbb{R}$ in such a way that if $E$ is an edge in $\mathcal{C}$ with one vertex in $C$, the neural network $F_s = G_s \circ F_1 $ satisfies the condition that $F_s(E\backslash C)>0$ if and only if $s([E])=1$ and $F_s(E\backslash C)<0$ if and only if $s([E])=-1$. 
	
\end{lemma}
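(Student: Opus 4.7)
The plan is to exploit the simple local structure of $F_1$ near the top-dimensional flat cell $C$ and then build $G_s$ one coordinate at a time. By Lemma~\ref{l:onelayersendstopoint}, a top-dimensional flat cell of a generic layer map must carry the ternary label $(-1,\ldots,-1)$, so $F_1(C) = \vec{0}$ and $C$ is the ``all-off'' region of the hyperplane arrangement $\mathcal{A}$ associated to $F_1$. By genericity, each facet $D$ of $C$ lies in a unique hyperplane $H_{j(D)} \in \mathcal{A}$, and the unique top-dimensional cell $C'_D$ sharing this facet with $C$ has the ternary label obtained from that of $C$ by flipping position $j(D)$ alone. Consequently, on the interior of $C'_D$, only the $j(D)$-th output neuron is active, so
\[
F_1(x) \;=\; \bigl(0, \ldots, 0, \pi_{j(D)}(A_1(x)), 0, \ldots, 0\bigr), \qquad \pi_{j(D)}(A_1(x)) > 0.
\]

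Next I will identify the equivalence classes in $\mathcal{E}$ with the facets of $C$. Any edge $E$ with exactly one vertex $v \in C$ that is not contained in $C$ exits $v$ by crossing precisely one of the $n$ hyperplanes through $v$, namely the hyperplane $H_k$ bounding some facet $D$ of $C$ at $v$; a short ternary-label check (using genericity) then shows that among the top-dimensional cells having $E$ as a face, there is a unique one whose label differs from that of $C$ in exactly one position, and that cell is $C'_D$. Hence $[E] = [D]$ in $\mathcal{E}$, and classes correspond bijectively to facets of $C$. The previous lemma guarantees that this assignment of a single sign per class is consistent with any choice of $G_s$, so our task reduces to realizing an arbitrary sign for each facet.

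With the bijection in hand, I will define $G_s$ to be the linear functional
\[
G_s(v_1, \ldots, v_m) \;=\; \sum_{j=1}^{m} w_j \, v_j, \qquad
w_j \;=\; \begin{cases} s([D]) & \text{if } H_j = H_{j(D)} \text{ for some facet } D \text{ of } C,\\ 0 & \text{otherwise.} \end{cases}
\]
For any edge $E$ in class $[D]$ and any $x \in E \setminus C \subset C'_D$, substituting the structural formula for $F_1$ yields $F_s(x) = s([D]) \cdot \pi_{j(D)}(A_1(x))$, which has the sign of $s([D])$ since the second factor is strictly positive. The main obstacle is establishing the structural claim that on each $C'_D$ only the $j(D)$-th neuron is active; this step rests entirely on genericity (under which a single facet-crossing flips exactly one ternary coordinate) together with Lemma~\ref{l:onelayersendstopoint}. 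Once that claim is in place, the construction of $G_s$ and the sign verification are immediate.
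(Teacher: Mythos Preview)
Your proposal is correct and follows essentially the same approach as the paper: both arguments use Lemma~\ref{l:onelayersendstopoint} to identify $C$ as the all-negative region (sent to the origin), establish a bijection between the equivalence classes in $\mathcal{E}$ and the facets of $C$ via the adjacent regions $C'_D$ (the paper's $R_{\vec{r_i}}$), observe that each such region maps under $F_1$ onto a single positive coordinate ray in $\mathbb{R}^m$, and then take $G_s$ to be the linear functional with the prescribed sign on each active coordinate. Your inline handling of non-facet hyperplanes by setting $w_j=0$ is equivalent to the paper's preliminary reduction that drops those coordinates.
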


\begin{proof}
	If $F: \mathbb{R}^n \to \mathbb{R}^m$ is a generic layer map with a top-dimensional flat cell, we have established in Lemma \ref{l:onelayersendstopoint} that $C =   H_1^- \cap ... \cap H_m^-$. Without loss of generality, we treat $C$ as if it has $m$ facets given by $H_i \cap C$; if it has fewer than $m$ facets, and if hyperplane $H_i$ is a hyperplane in the arrangement $\mathcal{A}_1$ associated to $F_1$ which is not a supporting hyperplane of $C$, then all cells in the star of $C$ are contained in $H_i^-$ and are sent to zero in coordinate $i$ under $F_1$. We could therefore construct a layer $F_1'$ with all output coordinates of the affine map $A_1$ corresponding to nonsupporting hyperplanes of $C$ set equal to zero, by setting their weights equal to zero. This new layer is equal to $F_1$ on all points in cells in the star of $C$. If $G: \mathbb{R}^m  \to \mathbb{R}$ is an affine second layer, its restriction to the nonzero input coordinates completely determines the induced orientations on edges incident to $C$ in both $F_1$ and $F_1'$, so we may define $G'$ on this subspace and choose an arbitrary affine extension of $G'$ to $\R^m$ for the result. We note that by deleting the zero coordinates of $\R^m$ we obtain a smaller network with the dimension of the hidden layer equal to the number of facets of $C$ with equivalent orientations on edges incident to $C$.

	Now under these assumptions, $C= H_1^- \cap ... \cap H_m^-$ is sent to the origin by $F_1$ and it has $m$ facets given by $H_i \cap C$ for $i \in \{1,...,m\}$. 
	
	If $m< n$ then we are done as there there are no vertices and therefore no edges in $\mathcal{C}(F)$ which have a vertex in $C$ to be assigned orientations, so consider the case that $m \geq n$.
	The equivalence classes used to define $\mathcal{E}$ are well-defined. Letting $\vec{r_i}$ be a length-$m$ string with a single $1$ in position $i$ and $-1$ elsewhere, we see immediately $R_{\vec{r_i}}$, which is a nonempty $n$-cell given by $H_1^- \cap ... \cap H_i^+ \cap ... \cap H_m^-$, shares a facet with $C$ given by $H_i \cap C$. Additionally, every region which shares a facet with $C$ is equal to $R_{\vec{r_i}}$ for some $i$ as the facets of $C$ are precisely $H_i \cap C$. No edge representing an element of $\mathcal{E}$ belongs to both $R_{\vec{r_i}}$ and $R_{\vec{r_j}}$ for $i \neq j$ because any point in $R_{\vec{r_i}}\cap R_{\vec{r_j}}$  must be in $R_{\vec{-1}}$. 
	
Furthermore, every edge $E$ incident to $C$ is an edge of  $R_{\vec{r_i}}$ for some $i$. Suppose $E$ is incident to $C$. Then it has a vertex $v$ given by $E \cap C$. By the genericity of $A_1$, $v$ is contained in $n$ of the hyperplanes $H_1,...H_m$. To be a vertex of $C$, it must additionally have a $-1$ ternary coding relative to the $m-n$ remaining hyperplanes. As the remaining $m-n$ hyperplanes do not meet $v$, $E$ also has a $-1$ labeling with respect to the remaining $m-n$ hyperplanes, and is contained in $n-1$ of the $n$ hyperplanes which intersect to form $v$. Let $H_i$ be the hyperplane containing $v$ but not $E$. If $E$ was contained in $H_i^-$, then $E$ would be an edge of $C$ and not incident to $C$. So, $E$ is contained in $H_i^+$. So, we may identify $E$ as being an edge contained in the closure of $H_i^+ \cap \left(\bigcap_{j \neq i}H_j^-\right)$, and so $E$ is an edge of $R_{\vec{r_i}}$.
	
	Next we note that $R_{\vec{r_i}}$, is sent, by the map $F_1$, to coordinates on the $i$th positive axis in $\mathbb R^m$. For each $i \in \{1,...,m\}$, select $E_i$ to be an edge of $R_{\vec{r_i}}$ with exactly one vertex in $C$. Note that $\mathcal{E}=\{ [E_i] \}_{1\leq i \leq m}$. As $E_i$ is an edge of $R_{\vec{r_i}}$, we have that $E_i \subseteq H_j^-$ for all $j \neq i$. If $x \in E_i$ then $F_1(x)$=$(0,0,...,c,...,0)$ where $c$ is a positive real number in coordinate $i$. That is, $x = c \vec{e_i}$, where $\vec{e_i}$ is the $i$th standard unit coordinate vector in $\mathbb{R}^m$.
	
	We now show that by selecting $G_{s}: \mathbb{R}^m \to \mathbb{R}$ appropriately, we may induce the desired orientation on the  edges $E_i$. We may use $s$ to define $\vec{s} \in \mathbb{R}^{m}$ by $$\vec{s}=\sum_{i=1}^{m}s([E_i])\vec{e}_i.$$ Define $G_s: \mathbb{R}^m \to \mathbb{R}$ by $G_s(\vec{x})= \vec{x} \cdot \vec{s}$, without a ReLU activation, as this is the final layer of the neural network we are defining. 
	
	Define $F_s$ to be the two-layer neural network given by $G_s \circ F_1$. Note that $F_s(C)=0$ identically as $F_1(C)$ is the origin, which is preserved by $G_s$.	For $x_i \in E_i$, we note that if $F_s(x_i)>F_s(C)=0$ the induced orientation on $E_i$ is ``outwards", and if $F_s(x_i)<F_s(C)=0$ the induced orientation on $E_i$ is ``inwards". With $F_s$ defined in this way, we do indeed achieve these orientations:		
	\begin{align*}
	F_s(\vec{x}_i) &=  c(\vec{e_i} \cdot \vec{s_i}) \\ 
	&= s([E_i]) c 
	\end{align*}
	
	As $c$ was a positive real number then if $s([E_i])=-1$ then indeed $s([E_i])c<0=F_s(C)$ and if $s([E_i])=1$ then $s([E_i])c >0=F_s(C)$ as desired.
\end{proof}	

\begin{figure}
	\includegraphics[width=4in]{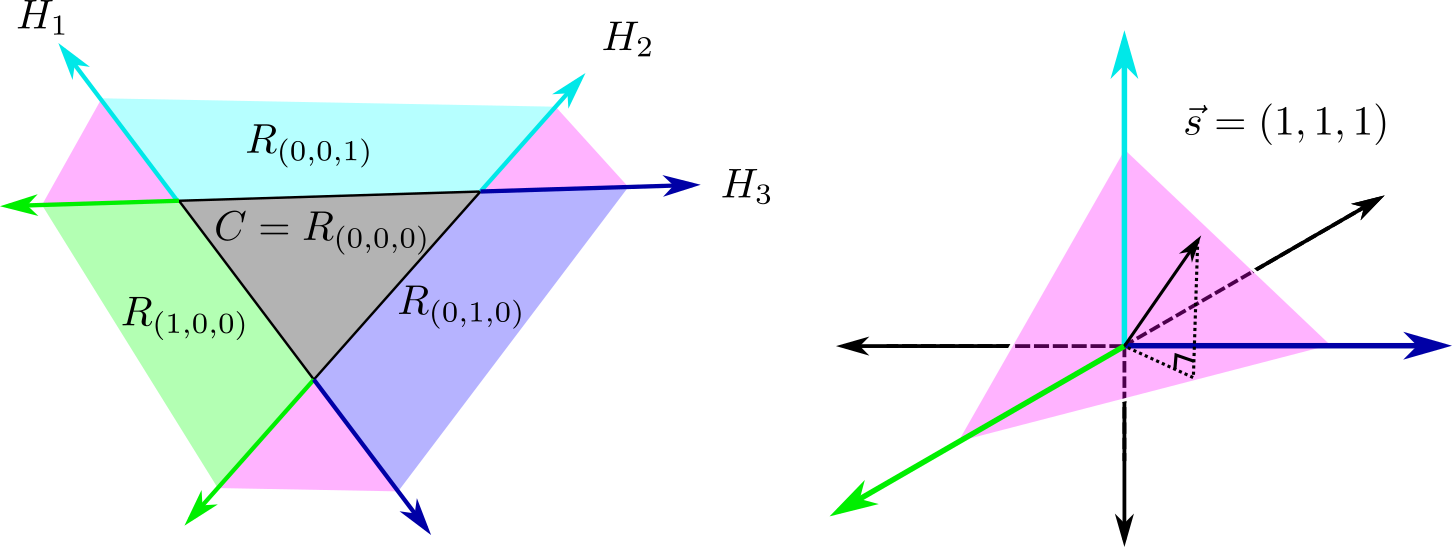}
	\caption{If $F_1$ is as pictured, then selecting $\vec{s}=(1,1,1)$ yields $G: \mathbb{R}^3 \to \mathbb{R}$ given by taking the inner product with $\vec{s}$. We may select $\vec{s}$ to be in any octant, with each octant giving a different combination of induced orientations on the pairs of colored edges. }
\end{figure}

\color{black}

	\begin{lemma}
	For each $n \in \mathbb{N}$, there is a shallow ReLU neural network $F: \mathbb R^2 \to \mathbb R^{2n+2} \to \mathbb R$, such that $\mathcal{C}(F)$ contains a flat cell $C$ with local complexity $n$. 
\end{lemma}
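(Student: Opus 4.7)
The plan is to build $F$ explicitly by choosing a generic first layer whose associated flat cell $C$ is a $(2n+2)$-gon, then tuning the second layer so that the $\nabla F$-orientations on edges incident to $C$ create exactly $n+1$ ``slivers'' of $F_{\leq 0}$ around $C$, and finally computing the local homology via the long exact sequence.

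\emph{Steps 1--2: the network.} I would pick $2n+2$ lines $H_1,\ldots,H_{2n+2}$ in $\mathbb{R}^2$ in generic position, co-oriented so that $C := \bigcap_{i=1}^{2n+2} H_i^-$ is a bounded convex $(2n+2)$-gon (for instance tangent lines to a small circle about the origin with outward-pointing normals). By Lemma~\ref{l:onelayersendstopoint} the associated polyhedral decomposition contains $C$ as a top-dimensional flat cell and $F_1(C)=\vec{0}$. Label the facets of $C$ cyclically as $e_1,\ldots,e_{2n+2}$ with $e_i\subset H_i$, and apply the preceding lemma with $s_i=-1$ for $i$ odd and $s_i=+1$ for $i$ even (arranged so that the resulting $G_s$ satisfies $G_s(\vec{0})=0$) to obtain an affine second layer $G_s$. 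Set $F:=G_s\circ F_1$. Then $F(C)=0$, and the edges of $R_{\vec{r}_i}$ incident to $C$ are $\nabla F$-oriented inward (i.e.\ $F<0$ just beyond the facet $e_i$) if and only if $i$ is odd, so exactly $n+1$ of the facets of $C$ have an adjacent sliver of $F_{\leq 0}$, and these $n+1$ facets alternate around $\partial C$ with the other $n+1$.

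\emph{Step 3: local topology.} Let $N$ be a small closed neighborhood of $C$ in $F_{\leq 0}$. The first task is to check that $N$ deformation retracts onto $C$, hence is contractible; this can be done either via an explicit flow along $-\nabla F$ on each adjacent cell, or by appeal to Theorem~\ref{t:retraction} applied on a small closed interval of thresholds immediately below $0$. The second, more delicate task is to show that $N\setminus C$ has exactly $n+1$ connected components, each contractible. The crucial point is the analysis of the corner cells $R_{\vec{r}_{i,i+1}}$ at a vertex $v$ of $C$ where the signs $s_i,s_{i+1}$ disagree: on such a corner cell $F$ is affine in the pre-ReLU coordinates and has the form $F=-x_i+x_{i+1}$ with $x_i,x_{i+1}\geq 0$, so $\{F\leq 0\}$ is the subregion $\{x_i\geq x_{i+1}\}$, which meets the facet $\{x_i=0\}$ shared with $R_{\vec{r}_{i+1}}$ only at the point $v$. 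Hence the sliver attached to an odd-indexed facet is pinched off at each of its endpoints by the adjacent even-indexed facet, the $n+1$ odd-facet slivers are pairwise disjoint in $N\setminus C$, and each deformation retracts onto an arc and is contractible.

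\emph{Step 4: the computation and main obstacle.} Excision yields $H_*(F_{\leq 0},F_{\leq 0}\setminus C)\cong H_*(N,N\setminus C)$, and the long exact sequence of the pair, together with contractibility of $N$, gives $H_k(N,N\setminus C)\cong \widetilde{H}_{k-1}(N\setminus C)$ for every $k$. By Step~3, $\widetilde{H}_*(N\setminus C)$ is free of rank $n$ in degree $0$ and vanishes elsewhere, so $H_*(F_{\leq 0},F_{\leq 0}\setminus C)$ is free of rank $n$ concentrated in degree~$1$, and the total local $H$-complexity of $C$ equals $n$. The main obstacle is the corner-cell analysis in Step~3: one must rigorously verify that no mixed-sign corner merges two slivers, and one must also confirm that the choice of second-layer weights preserves the genericity and transversality hypotheses needed for Lemma~\ref{l:onelayersendstopoint} and the preceding lemma to apply to the composite network $F$.
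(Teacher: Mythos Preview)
Your proposal is correct and follows essentially the same approach as the paper: the same explicit construction (tangent lines to a circle with outward normals, alternating second-layer signs) and the same excision/long-exact-sequence computation, which the paper carries out in the lemma immediately following this one. The paper's proof of this particular lemma is phrased instead as an informal handle-attachment count, and it sidesteps your corner-cell analysis by simply choosing the disc $N(C)$ small enough to contain no vertices of $\mathcal{C}(F)$ other than those of $C$.
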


\begin{proof}
	Define $F_1: \mathbb R^2 \to \mathbb R^{2n+2}$ using the affine map $A_1$ with weights and biases given for $j \in \{1,\ldots,2n+2\}$: 
	
	$$\left(w_j^{(1)}\;|\;b_j^{(1)}\right) = \left( \sin\left(\frac{\pi j}{n+1} \right), \cos\left(\frac{\pi j}{n+1} \right) \Big| -1  \right) $$
	
	Then define $G: \mathbb{R}^{2n+2} \to \mathbb{R}$ with $$\left(w^{(2)}\;\big|\;b^{(2)}\right) = \left(-1, 1, -1, ... , 1, -1 |\;0\right).$$ Let $F = G \circ F_1$.
	
	We observe we may identify $H_j$ with the line tangent to the point on the unit circle at angle $\pi j/(n+1)$, co-oriented `outward' (away from the origin). Let $C$ be the flat $(2n+2)-$gon containing the origin, which satisfies $F(C)=0$. We may let $N(C)$ be a disc of radius $r$ centered at the origin, where $1<r<\left(\cos\left(\frac{\pi}{n+1}\right) \right)^{-1}$. Note that $N(C)$ contains no other vertices of $\mathcal{C}(F)$ except those of $C$.\\
	
	Now consider $R_j$, the region given by $$R_j = N(C) \cap H_j^+ \cap \left(\bigcap_{i \neq j}H_i ^-\right)$$ 
	
	If $j$ is even, $F(R_j)>0$. Likewise, if $j$ is odd, $F(R_j)<0$. As a result, for small $\epsilon>0$, $F^{-1}(-\infty, -\epsilon]\cap N(C)$ has $n+1$ connected components whereas $F^{-1}(-\infty, \epsilon]\cap N(C)$ is connected. Since attaching a PL handle may remove at most one connected component, at least $n$ handles must be attached to $ F^{-1}(-\infty, -\epsilon]\cap N(C)$ to obtain a PL manifold homeomorphic to $F^{-1}(-\infty, \epsilon]\cap N(C)$. Thus, the local complexity of $C$ is at least $n$. 
	
	Finally, it is sufficient to add $n$ handles, as pictured below in the case when $n=2$. 	
	
\end{proof}
\begin{figure}[h]
	\begin{minipage}{0.4\textwidth}
		\includegraphics[width=2in]{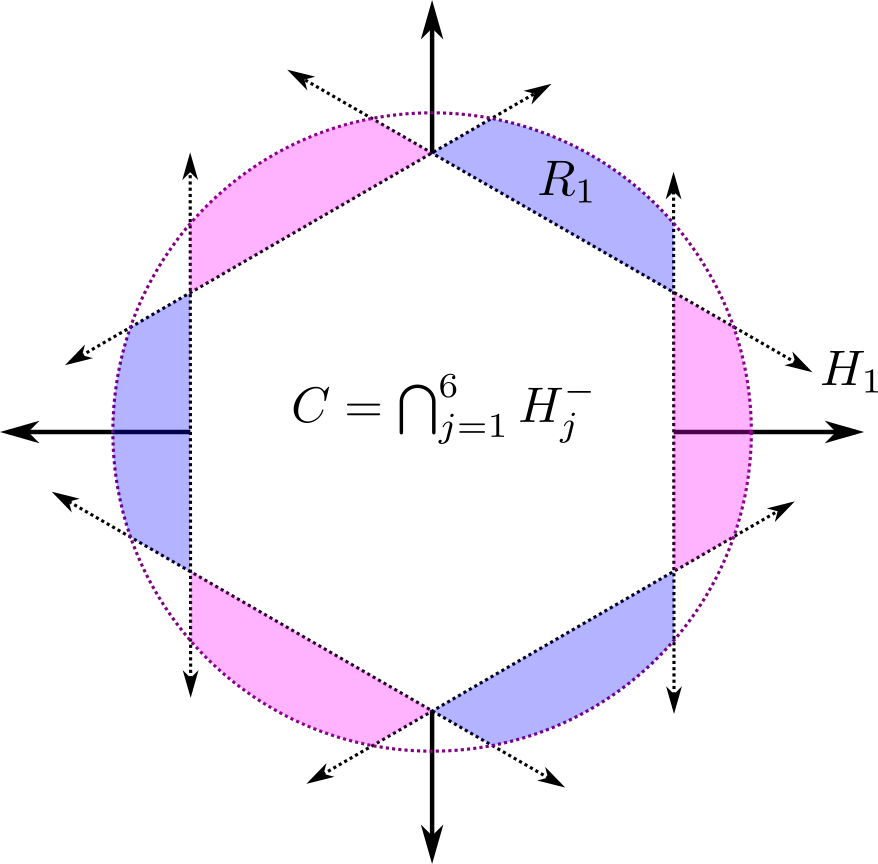}
	\end{minipage}	\begin{minipage}{0.4\textwidth}
		\includegraphics[width=2in]{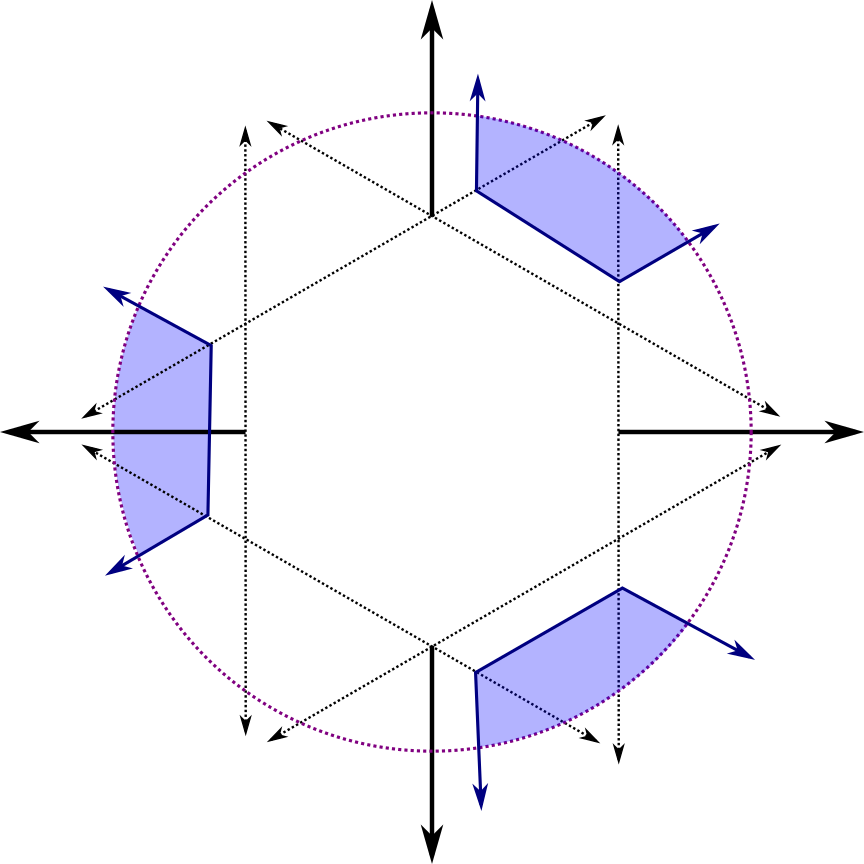}
	\end{minipage} 
	
	\begin{minipage}{0.4\textwidth}
		\includegraphics[width=2in]{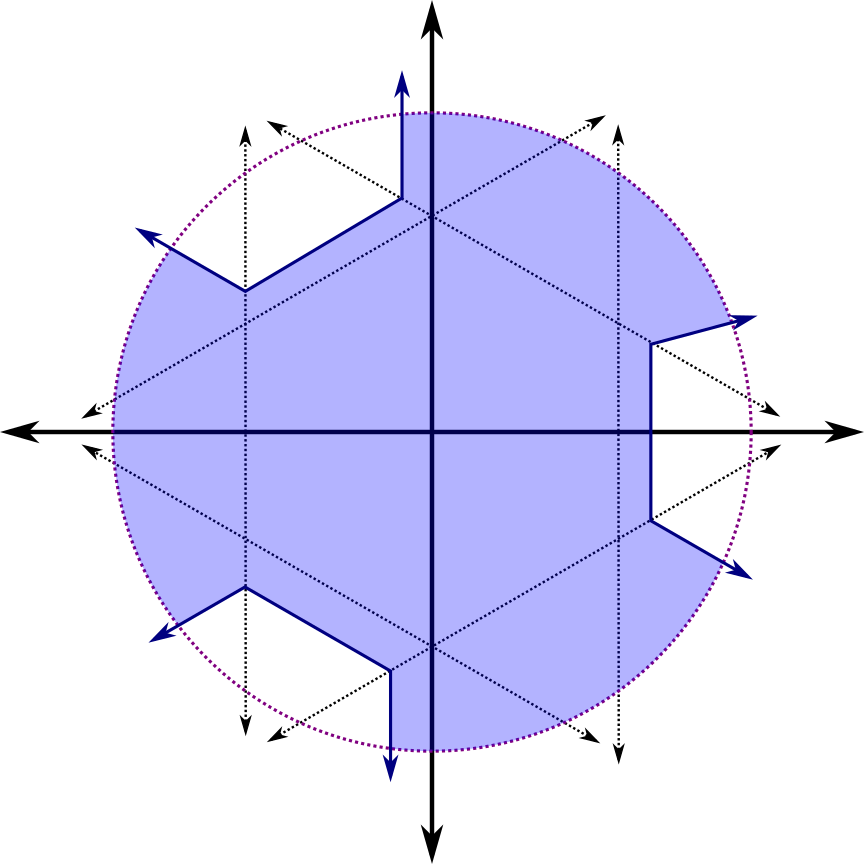}
	\end{minipage}
	\begin{minipage}{0.4\textwidth}
		\includegraphics[width=2in]{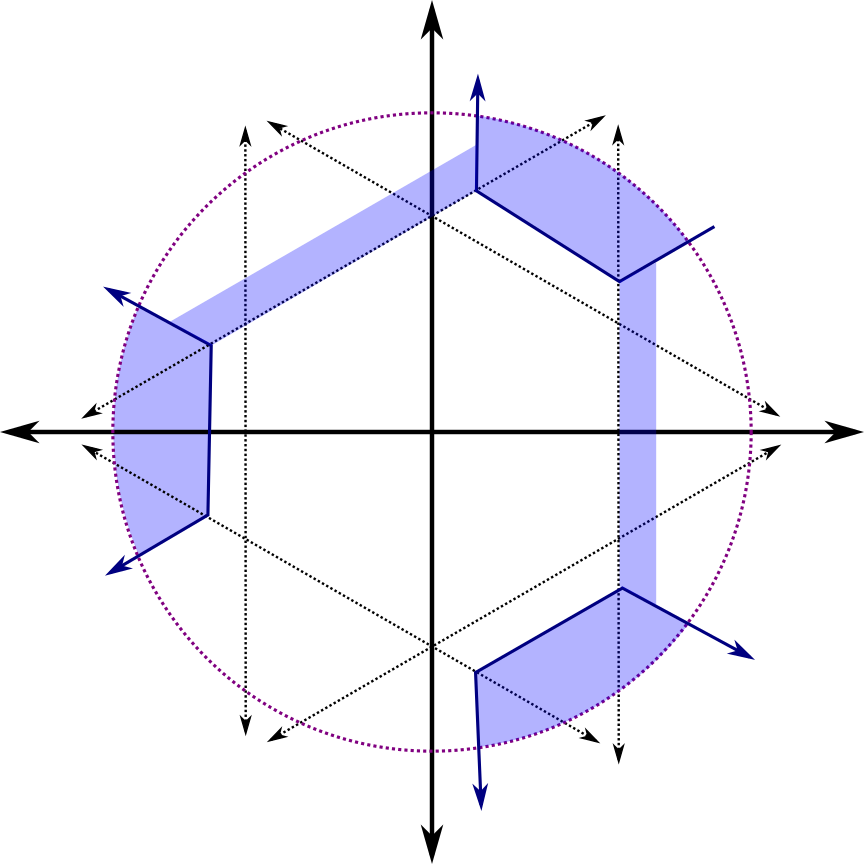}
	\end{minipage}
	\caption{Example when $n=2$. \textit{Top Left:} the polyhedral complex $\mathcal{C}(F)$ with $C$, $R_1$ and $H_1$ distinguished. The circle bounds $N(C)$. $F$ is strictly positive in the pink cells, and strictly negative in the blue cells. \textit{Top Right:} The sublevel set $F^{-1}(-\infty, -\epsilon]$ is shaded. \textit{Bottom Left:} The sublevel set $F^{-1}(-\infty, \epsilon]$ is shaded. \textit{Bottom Right:} Two handles are attached to $F^{-1}(-\infty, -\epsilon]\cap N(C)$ yielding a polyhedral complex which is PL homeomorphic to  $F^{-1}(-\infty, -\epsilon] \cap N(C)$.  }
\end{figure}

\begin{lemma}
	The closure $K$ of the flat cell $C$ given in the previous lemma has local $H_1$-complexity $n$, and its local $H_i$-complexity is zero for all $i \neq 1$.
\end{lemma}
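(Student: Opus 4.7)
My plan is to apply excision to localize the relative homology to the disk $\bar{N}:=\overline{N(C)}$ from the previous lemma, and then read off the answer from the long exact sequence of a pair. Since $K$ is closed in $\mathbb{R}^2$ and $K\subset \operatorname{int}(\bar N)$, standard excision yields
$$H_i\bigl(F_{\leq 0},\; F_{\leq 0}\setminus K\bigr) \;\cong\; H_i\bigl(F_{\leq 0}\cap \bar N,\; (F_{\leq 0}\cap \bar N)\setminus K\bigr)$$
for every $i\geq 0$. Write $X:=F_{\leq 0}\cap \bar N$ and $A:=X\setminus K$.

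Next I would describe $X$ and $A$ combinatorially using the explicit structure of $\mathcal{C}(F)$ near $K$. Since $F$ vanishes exactly on $K$ inside $\bar N$ (by the choice of the radius $r$) and $F(R_j)<0$ precisely when $j$ is odd, we have
$$X \;=\; K \;\cup\; \bigcup_{j \text{ odd}}\bigl(R_j \cap \bar N\bigr),$$
where each ``flap'' $R_j\cap \bar N$ meets $K$ along the single edge $H_j\cap K$ of the $(2n{+}2)$-gon. Consecutive edges of $K$ are shared alternately with an odd flap $R_j$ and an even region $R_{j+1}$, so no two odd flaps share a vertex. This yields two topological consequences. First, $X$ is contractible: deformation retract each odd flap onto its shared edge with $K$, then contract the convex polygon $K$ to a point. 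Second, $A$ is the disjoint union of $n{+}1$ contractible pieces, one for each odd $j$, namely $(R_j\cap \bar N)\setminus (H_j\cap K)$.

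Finally I would plug these into the long exact sequence of the pair $(X,A)$. We have $H_i(X)=H_i(A)=0$ for $i\geq 1$, $H_0(X)=\mathbb{Z}$, and $H_0(A)=\mathbb{Z}^{n+1}$, so the only nonzero portion of the sequence is
$$0 \;\to\; H_1(X,A) \;\to\; \mathbb{Z}^{n+1} \;\xrightarrow{\;\iota_*\;}\; \mathbb{Z} \;\to\; H_0(X,A) \;\to\; 0,$$
where $\iota_*$ is the surjective augmentation induced by inclusion of components of $A$ into the connected space $X$. Hence $H_1(X,A)\cong \mathbb{Z}^n$, $H_0(X,A)=0$, and $H_i(X,A)=0$ for $i\geq 2$, giving the claimed local $H_1$-complexity of $n$ and vanishing of all other local $H_i$-complexities.

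The main obstacle is the combinatorial bookkeeping in the middle step: verifying that the $n{+}1$ odd flaps are pairwise disjoint inside $\bar N$ (so $A$ genuinely has $n{+}1$ components) and that $F_{=0}\cap \bar N=K$. Both follow from the genericity of the first-layer hyperplane arrangement (so that $K$ is a convex $(2n{+}2)$-gon with the prescribed facet/vertex structure) together with the alternating sign pattern of the second-layer weights; once these are in hand, the rest of the proof is a direct excision plus long-exact-sequence computation.
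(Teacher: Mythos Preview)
Your proof is correct and follows essentially the same route as the paper: excise to the neighborhood $N(C)$, observe that $F_{\leq 0}\cap N(C)$ is contractible while $(F_{\leq 0}\setminus K)\cap N(C)$ has the homotopy type of $n{+}1$ points, and read off the result from the long exact sequence of the pair. You supply more of the combinatorial justification (disjointness of the odd flaps, contractibility of $X$) than the paper does, but the argument is the same.
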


\begin{proof}
	We compute $H_*(F_{\leq 0}, F_{\leq 0}\setminus K)$
We note by excision (cf. \cite{Hatcher}) that we may instead compute
	$$H_*(F_{\leq 0} \cap N(C), (F_{\leq 0} \setminus C) \cap N(C))  $$ 
Since  $(F_{\leq 0}\setminus K) \cap N(C) $ is homotopy equivalent to $n+1$ points and $F_{\leq 0} \cap N(C)$ is homotopy equivalent to a point, applying the long exact sequence for reduced homology gives us trivial relative homology in all degrees except degree 1, where it induces an isomorphism 
	$$ H_1(F_{\leq 0}\cap N(C),    (F_{\leq 0} \setminus K)\cap N(C)) \cong \tilde{H}_0((F_{\leq 0} \setminus K) \cap N(C))$$ 

	Thus, ${H}_1(F_{\leq 0}, F_{\leq 0} \setminus K) \cong \mathbb{Z}^{n}$.
	
\end{proof}

\color{black}
\bibliography{ReLUPaper}
\end{document}